\newtheorem{satz}{Theorem}
\newtheorem{proposition}[satz]{Proposition}
\newtheorem{theorem}[satz]{Theorem}
\newtheorem{lemma}[satz]{Lemma}
\newtheorem{corollary}[satz]{Corollary}
\newtheorem{remark}[satz]{Remark}
\newtheorem{example}[satz]{Example}
\def\eps{\varepsilon}
\def\_phi{\varphi}
\def\a{\alpha}
\def\d{\delta}
\def\la{\lambda}
\def\F{{\mathbb F}}
\def\t{\tilde}
\def\ov{\overline}
\def\C{{\mathbb C}}
\def\R{{\mathbb R}}
\def\E{\mathsf {E}}
\def\T{{\mathbb T}}
\def\Z_N{{\mathbb Z}_N}
\def\Z{{\mathbb Z}}
\def\f{{\mathbb F}}
\def\Gr{{\mathbf G}}
\def\Spec{{\rm Spec\,}}
\def\oT{{\rm T}}
\def\G{\Gamma}
\def\c{\circ}
\def\Cf{{\mathcal C}}
\def\T{\mathsf {T}}
\author{Shkredov I.D.}
\title{
%On additive decompositions of multiplicative subgroups
%Sumsets
Differences
of subgroups in subgroups
\footnote{
This work was supported by grant
%mol\underline{ }a\underline{ }ved 12--01--33080.
Russian Scientific Foundation RSF 14--11--00433.}
%\newline
%{\bf Keywords} : Gowers norms, linear equations.
%\newline
%MSC 2000 : 11B75, 11B99.}
}
\date{}
\begin{document}
\maketitle

\begin{center}
 Annotation.
\end{center}

{\it \small
    We prove, in particular, that if $A,\G \subset \f^*_p$, $|\G| < p^{3/4}$  are two
    %any
    arbitrary
    multiplicative subgroups
    %one has
    satisfying
    $A-A \subseteq \G \bigsqcup \{ 0\}$
    then
    %implies
    $|A| \ll |\G|^{1/3+o(1)}$.
    Also, we obtain that for any $\eps >0$ and
    %an arbitrary
    a sufficiently large
    subgroup $\G$ with $|\G| \ll p^{1/2-\eps}$ there is no representation $\G$ as $\G = A+B$, where $A$ is another subgroup, and $B$ is an arbitrary set, $|A|,|B|>1$.
    Finally, we study the number of collinear triples containing in a set of $\f_p$ and prove a "dual"\, sum--products estimate.
}
\\
%\\
%\\

\section{Introduction}
\label{sec:introduction}

Let $p$ be a prime number and $\f_p$ be the prime filed.
For two sets $A,B\subseteq \f_p$ we define its sumset as
$$
    A+B = \{ a+b ~:~ a\in A\,, b\in B \}
$$
and similarly its difference set, product set, and so on.
A set $S\subseteq \f_p$ is said to be {\it additively decomposable} if $S=A+B$, $|A|,|B|\ge 2$
and {\it primitive} otherwise.
S\'{a}rk\"{o}zy conjectured in \cite{Sarkozy_residues} that the set of quadratic residues $R$ is primitive
and proved that  if $R=A+B$, $|A|,|B|\ge 2$ then cardinalities of $A,B$ should be close to $\sqrt{p}$.
There are several papers in the direction, see \cite{BMR}, \cite{DS_AD}, \cite{LS}, \cite{Sh_Sarkozy}, \cite{SSV}, \cite{Shparlinski_AD}.
%His
S\'{a}rk\"{o}zy's
result was refined slightly in \cite{Shparlinski_AD}, \cite{Sh_Sarkozy}
and extended to the case of all multiplicative subgroups by Shparlinski, see \cite{Shparlinski_AD}.

\begin{theorem}
    Let $\G \subseteq \f_p$ be a multiplicative subgroup and for some $A,B \subseteq \f_p$
    one has
\begin{equation}\label{f:Shparlinskii-}
    A+B \subseteq \G \,,
\end{equation}
    where $|A|,|B| \ge 2$.
    Then
\begin{equation}\label{f:Shparlinskii}
    |A|,\, |B| \le |\G|^{1/2 + o(1)}
\end{equation}
    as $|\G| \to \infty$.
    In particular, if $A+B = \G$ then
$$
    |A|,\, |B| = |\G|^{1/2 + o(1)} \,.
$$
\label{t:Shparlinskii}
\end{theorem}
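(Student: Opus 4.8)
The plan is to bound $|A|$, the bound for $|B|$ being symmetric, by reducing everything to the number of representations of a fixed nonzero element as a difference of two elements of $\G$. Since $|B|\ge 2$, fix distinct $b_1,b_2\in B$ and put $c=b_1-b_2\neq0$. For every $a\in A$ both $a+b_1,a+b_2\in\G$, so $a\in(\G-b_1)\cap(\G-b_2)$; setting $\gamma_i=a+b_i$ identifies such $a$ with the pairs $(\gamma_1,\gamma_2)\in\G\times\G$ satisfying $\gamma_1-\gamma_2=c$, whence
\[
 |A|\le|(\G-b_1)\cap(\G-b_2)|=\#\{(\gamma_1,\gamma_2)\in\G\times\G:\gamma_1-\gamma_2=c\}=:\delta_\G(c).
\]
By symmetry $|B|\le\delta_\G(a_1-a_2)$ for distinct $a_1,a_2\in A$, so the individual bounds follow once $\delta_\G(c)\le|\G|^{1/2+o(1)}$ is known for $c\neq0$.

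To estimate $\delta_\G(c)$ I would work according to the size of $\G$. Dividing $\gamma_1-\gamma_2=c$ by $\gamma_2$ and writing $t=\gamma_1/\gamma_2$, $s=\gamma_2^{-1}$ turns it into the nondegenerate linear equation $t-cs=1$ with $t,s\in\G$, so that $\delta_\G(c)=\sum_{s}1_\G(s)1_\G(1+cs)$. Expanding $1_\G(x)=\tfrac{|\G|}{p-1}\sum_{\chi\in\G^{\perp}}\chi(x)$, where $\G^{\perp}$ is the group of characters trivial on $\G$ and $|\G^{\perp}|=(p-1)/|\G|$, the principal term yields the expected main contribution $\approx|\G|^2/p$ (which is at most $|\G|^{1/2}$ once $|\G|\le p^{2/3}$), while each remaining mixed sum $\sum_{s}\chi(s)\psi(1+cs)$ is $\ll\sqrt p$ by Weil's bound; this controls $\delta_\G(c)$ when $\G$ is large. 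For smaller subgroups Weil is too weak, and one appeals instead to Stepanov's method, which bounds the number of solutions of $t-cs=1$ in $\G$ and gives $\delta_\G(c)\ll|\G|^{2/3+o(1)}$.

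For $\G$ comparable to $p$ (the case of S\'ark\"ozy's quadratic residues) I would instead use a bilinear character sum. Fix a nonprincipal $\psi\in\G^{\perp}$; since $a+b\in\G$ we have $\psi(a+b)=1$, so $\sum_{b\in B}\psi(a+b)=|B|$ for every $a\in A$. Enlarging the range of $a$ to all of $\f_p$ and using the completed evaluation $\sum_{a}\psi(a+b)\overline{\psi}(a+b')=-1$ for $b\neq b'$,
\[
 |A|\,|B|^2=\sum_{a\in A}\Big|\sum_{b\in B}\psi(a+b)\Big|^2\le\sum_{a\in\f_p}\Big|\sum_{b\in B}\psi(a+b)\Big|^2=|B|(p-|B|),
\]
so that $|A||B|<p$. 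Together with the elementary lower bound $|A||B|\ge|A+B|=|\G|$ available when $A+B=\G$, this pins the product $|A||B|$ to size $\asymp|\G|$ for $|\G|\asymp p$ and yields the ``in particular'' statement once the two sizes are known to be balanced.

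The main obstacle is twofold. First, the exponent $1/2$ is sharp only with effort: the reduction to a single linear equation $t-cs=1$ gives merely $\delta_\G(c)\ll|\G|^{2/3+o(1)}$ in the worst case over $c$, and improving $2/3$ to $1/2$ requires the finer information extracted by Stepanov's method rather than Weil's bound, with care about the range of $|\G|$ relative to $p$. Second, and more seriously, the bilinear inequality only controls the product $|A||B|$, whereas the statement asserts individual bounds; excluding very unbalanced configurations forces one to use the difference estimate $|A|\le\delta_\G(c)$ to its full strength, equivalently to average $\delta_\G(b_1-b_2)$ over $B-B$ and insert a bound for the additive energy of $\G$, so that the additive structure imposed on $B$ is shown to be incompatible with the multiplicative structure of $\G$. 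Reconciling the small-$\G$ and large-$\G$ regimes into a single clean $|\G|^{1/2+o(1)}$ is where the real work lies.
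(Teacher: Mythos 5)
There is a genuine gap, and it sits exactly where you say ``the real work lies.'' Your reduction uses only the two shifts coming from one pair $b_1\neq b_2\in B$, so the whole argument rests on a bound $|\G\cap(\G+c)|\le|\G|^{1/2+o(1)}$ for a single nonzero $c$. No such bound is available. Stepanov's method applied to the single equation $t-cs=1$ with $t,s\in\G$ gives only $|\G\cap(\G+c)|\ll|\G|^{2/3}$ (this is Lemma~\ref{l:mitkin} with $|\Theta|=1$), and improving the exponent $2/3$ for one intersection is a well-known open problem, not ``finer information extracted by Stepanov's method.'' The character-sum evaluation fares no better: it gives $|\G|^2/p+O(\sqrt p)$, and the requirement that the main term be $\le|\G|^{1/2}$ forces $|\G|\le p^{2/3}$ while the requirement that the error term $\sqrt p$ be $\le|\G|^{1/2+o(1)}$ forces $|\G|\ge p^{1-o(1)}$; these are incompatible, so the $k=1$ Weil bound never reaches exponent $1/2$ in any range. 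The idea you are missing --- the whole point of the method the paper sketches around (\ref{f:A,D_inclusion}) --- is to intersect with an \emph{unbounded} number of shifts: $A\subseteq\bigcap_{i=1}^{k}(\G-b_i)$ for $k$ distinct $b_i\in B$ with $k=k(|\G|)\to\infty$ slowly. Then Theorem~\ref{t:main_many_shifts} (the multi-shift Stepanov bound from \cite{V-S}) yields $|A|\ll_k|\G|^{(k+1)/(2k+1)}=|\G|^{1/2+\alpha_k}$, $\alpha_k\to0$, for $|\G|\ll_k p^{1-\beta_k}$, while Lemma~\ref{l:C_for_subgroups} gives $|A|\le|\G|(|\G|/p)^k+O(k2^{k+3}\sqrt p)$ and covers subgroups of size close to $p$. (The paper itself does not reprove the theorem --- it is quoted from \cite{Shparlinski_AD} --- but these two lemmas are exactly the ingredients it names.)

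A second, related gap: having $k$ shifts requires $k+1$ distinct elements of $B$, whereas the hypothesis only gives $|B|\ge2$. In the decomposition case $A+B=\G$ this is repaired by bootstrapping: $|A||B|\ge|\G|$ forces $\max(|A|,|B|)\ge|\G|^{1/2}$, so the larger set supplies arbitrarily many shifts and pins the smaller one at $\le|\G|^{1/2+o(1)}$, whence the smaller set has size $\ge|\G|^{1/2-o(1)}\ge k+1$ and the argument runs in the other direction as well. Your bilinear inequality $|A||B|<p$ (essentially Lemma~\ref{l:ab_p} of the paper) cannot substitute for this, as you yourself note, since it controls only the product; and your proposed fix --- averaging $\delta_\G(b_1-b_2)$ over $B-B$ against the additive energy of $\G$ --- still cannot beat the single-shift barrier when $|B|$ is bounded, because the best known bounds on $\E^{+}(\G)$ give exponents worse than $2/3$ for an individual value of $(\G\circ\G)$. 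The correct starting inclusion and the observation that bilinear sums only bound $|A||B|$ are both right; what is missing is the passage from $2$ shifts to $k\to\infty$ shifts, which is the entire content of the $1/2$ in the exponent.
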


Note that if it is known that $A$ and $B$ have comparable sizes than one do not need in the restriction
$|A|,|B| \ge 2$ to get (\ref{f:Shparlinskii}).
Also the inclusion (\ref{f:Shparlinskii-}) can be replaced by a little bit wider one, namely,  $A+B\subseteq \G \bigsqcup \{ 0 \}$,
see the proof from \cite{Shparlinski_AD}.

%In the paper we prove that all sufficiently small multiplicative subgroups are primitive.
%\begin{theorem}
%    Let $\G \subseteq \f_p$ be a multiplicative subgroup, $\eps>0$ be any real and $|\G| \le p^{2/3-\eps}$.
%    Then $\G$ is primitive, that is $\G \neq A+B$ for all $A,B\subseteq \f_p$ with $|A|, |B| \ge 2$.
%\label{t:AD_subgroups}
%\end{theorem}

\bigskip

In the paper we consider a particular case when $A$ or $B$ is contained in some shift of a multiplicative subgroup.
It turns out that the exponent $1/2$ from (\ref{f:Shparlinskii}) can be refined in the situation.
Let us formulate our result in the simplest symmetric
%situation.
case (that is $B=-A$).
Denote by $\f^*_p$ the set $\f_p \setminus \{ 0 \}$.

\begin{theorem}
    Let $A,\G \subseteq \f_p$ be multiplicative subgroups.
    Suppose that for some $\xi \in \f^*_p$ one has
    \begin{equation}\label{cond:A-A_total}
        A-A \subseteq \xi \G \bigsqcup \{ 0\} \,.
    \end{equation}
    If $|\G| < p^{3/4}$ then
    $$
        |A| \ll |\G|^{1/3+o(1)} \,.
    $$
    If $|\G| \ge p^{3/4}$ then
    $$
        |A| \ll \min\left\{ \frac{|\G| \log^{1/2} p}{\sqrt{p}}, \sqrt{p} \right\} \,.
    $$
\label{t:A-A_total}
\end{theorem}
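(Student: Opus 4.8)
The plan is to first reduce to the case $A \subseteq \G$, and then to bound $|A|$ by a third--order counting argument, splitting according to the size of $\G$.

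\textbf{Reduction.} Assume $|A| \gs 2$ and pick any $t \in A$ with $t \ne 1$. Since $A$ is a multiplicative subgroup, $at \in A$ for every $a \in A$, and $at \ne a$, so $at - a = a(t-1) \in \xi\G$ by \eqref{cond:A-A_total}. Taking $a = 1$ gives $t - 1 \in \xi\G$, whence for every $a \in A$
\[
    a = \frac{a(t-1)}{t-1} \in \frac{\xi\G}{\xi\G} = \G \,.
\]
Thus $A \subseteq \G$; in particular $|A|$ divides $|\G|$. Moreover, applying the same computation to the differences $a - 1$, all the elements $a-1$, $a \in A \setminus \{1\}$, lie in the single coset $\xi\G$, i.e. $(a-1)/(a'-1) \in \G$ for all $a, a' \in A \setminus \{1\}$. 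This is the structural input I will exploit.

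\textbf{Main regime $|\G| < p^{3/4}$.} Write $n = |A|$. A purely second--moment approach is not enough: bounding the additive energy $\E^{+}(A)$ from below by $n^4/|A-A| \gs n^4/|\G|$ and from above by the standard subgroup energy bound $\E^{+}(A) \ll n^{5/2}$ only yields $n \ll |\G|^{2/3+o(1)}$. To reach the exponent $1/3$ I pass to a genuinely third--order quantity. Each ordered triple of distinct elements $a_1, a_2, a_3 \in A$ produces, after dividing the three differences by $\xi$, a solution of $g_1 + g_2 = g_3$ with $g_1, g_2, g_3 \in \G$; dually, the relation $(a-1)/(a'-1) \in \G$ exhibits $A$ as a configuration whose collinear triples are abnormally numerous. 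I would therefore set up the relevant point--line incidence problem in $\f_p^2$ and estimate the number of collinear triples supported on $\G$ by the Szemer\'edi--Trotter type bound over $\f_p$ (in the Stevens--de Zeeuw / Rudnev point--plane form). Feeding the lower bound forced by the difference structure into this incidence estimate gives $n^3 \ll |\G|^{1+o(1)}$, that is $|A| \ll |\G|^{1/3+o(1)}$; the constraint $|\G| < p^{3/4}$ is exactly the range in which the incidence bound beats its trivial competitor.

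\textbf{Large regime $|\G| \gs p^{3/4}$.} Here I would argue by character sums. Fixing $a_0 \in A$, the reduction gives $A \subseteq \G \cap (\xi\G + a_0)$, so $n$ is controlled by the number of representations $\gamma - \xi\gamma' = a_0$ with $\gamma, \gamma' \in \G$; the standard Gauss--sum estimate evaluates this as $|\G|^2/p$ plus a Weil error of size $O(\sqrt{p}\,\log p)$. Averaging this over $a_0 \in A$ (exploiting that \emph{every} $a_0 \in A$ gives such an inclusion) and applying Cauchy--Schwarz converts the main term $|\G|^2/p$ into the square--root shape, giving $|A| \ll |\G|\log^{1/2}p/\sqrt{p}$; the alternative bound $|A| \ll \sqrt{p}$ is the absolute cap coming from $A \subseteq \G$ together with $A - A$ lying in one coset. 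Taking the minimum is what is recorded in the statement.

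\textbf{Main obstacle.} The delicate point is the third--order step in the first regime: one must choose the incidence configuration so that the difference condition \eqref{cond:A-A_total} forces many collinear triples, while keeping the point set small enough for the $\f_p$ incidence bound to apply with an error that is genuinely of lower order; controlling the $o(1)$ and verifying that the crossover between the incidence estimate and the Gauss--sum estimate occurs precisely at $|\G| = p^{3/4}$ is where the real work lies. The reduction $A \subseteq \G$ and the large--$\G$ character--sum bound are comparatively routine.
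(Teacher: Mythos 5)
Your high--level diagnosis is correct --- a second--moment argument only gives $|A|\ll|\G|^{2/3+o(1)}$, and a genuinely third--order quantity is needed; the paper's choice is $\T(A)=\sum_{a_1,a_2\in A}\E^{\times}(A-a_1,A-a_2)$, precisely the collinear--triples count you gesture at. But the step that actually produces the exponent $1/3$ is absent from your write--up, and it is not an incidence theorem. The paper's mechanism is: Cauchy--Schwarz applied to $\sum_{x\ne0,\,\la\ne0}\Cf_3(A)(x,\la x)\gg|A|^3$ gives $|A|^6\ll\T(A)\cdot|\L|$, where $\L$ is the set of ratios $\la=(a_1-a)/(a_2-a)$ with $a,a_1,a_2\in A$. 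The hypothesis $A-A\subseteq\xi\G\sqcup\{0\}$ forces $\la\in\G$ and $\la-1\in\G\sqcup\{0\}$, hence $\L\subseteq\G\cap((\G\sqcup\{0\})+1)$, and Mit'kin's Lemma \ref{l:mitkin} gives $|\L|\ll|\G|^{2/3}$ exactly when $|\G|<p^{3/4}$ --- that is where the threshold comes from, not from the range of validity of a Szemer\'edi--Trotter analogue over $\f_p$. Combined with $\T(A)\ll|A|^4\log|A|$ for a subgroup $A$ with $|A|<\sqrt p$ (Proposition \ref{p:sigma}), this yields $|A|^2\ll|\G|^{2/3}\log|A|$. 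Your proposal never identifies the containment $\L\subseteq\G\cap(\G+1)$ nor invokes any bound for that intersection; the asserted conclusion $n^3\ll|\G|^{1+o(1)}$ is the entire content of the theorem, and ``feeding the lower bound forced by the difference structure into the incidence estimate'' is a restatement of the goal, not a step toward it. (Bounding $\T(A)$ itself by a point--plane incidence bound is a legitimate substitute for Proposition \ref{p:sigma} when $|A|<\sqrt p$, but that supplies only the upper bound on $\T(A)$, not the $|\G|^{2/3}$ saving.)

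The large regime as you sketch it also does not close. The direct bound $|A|\le|\G\cap(\xi\G+a_0)|\ll|\G|^2/p+O(\sqrt p)$ is weaker than the claimed $|\G|\log^{1/2}p/\sqrt p$ by a factor of $|\G|/\sqrt p\ge p^{1/4}$, and ``averaging over $a_0$ plus Cauchy--Schwarz'' is not a mechanism that converts the one into the other. The paper (Corollary \ref{cor:A-A_in_G}) instead reruns the identical third--order argument with Lemma \ref{l:C_for_subgroups} at $k=1$ supplying $|\G\cap(\G+1)|\ll|\G|^2/p$ in place of Mit'kin's bound, so that $|A|^2\ll\log|A|\cdot|\G|^2/p$; the hypothesis $|A|<\sqrt p$ needed for $\T(A)\ll|A|^4\log|A|$ is secured by Lemma \ref{l:ab_p} (taking a half of the set if necessary), which is also the source of your $\sqrt p$ cap. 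Your reduction to $A\subseteq\G$ is correct but inessential to the paper's argument.
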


In particular,  Theorem \ref{t:A-A_total} implies that   for any
    %$\kappa < 5/6$, and $|\G| \le p^{\kappa}$
    $\G$ with
    %$|\G| \ll p^{}/ \log p$
    $|\G| \ll p^{1-o(1)}$
    one has $\G \bigsqcup \{ 0\} \neq A-A$, where
    %$|A| \ge 2$.
    size of
    %$A$
    $\G$
    is sufficiently large.
    What can be said in the non--symmetric situation?
    In \cite{SSV} the following result on the decompositions was proved.

\begin{proposition}\label{p:subgroup_decomp_impossible}
Let $\eps \in (0,1]$ be a real number,
$A, \G \subset \mathbb{F}_p^{*}$ be sufficiently large multiplicative subgroups,
and $B \subseteq \mathbb{F}_p$ be an arbitrary nonempty set.
If $|\G \cap A| \ll |A|^{1-\eps}$ and $|\G| \ll p^{1-\eps/6},$
then $\G$ has no nontrivial representations as $\G=A+B.$
\end{proposition}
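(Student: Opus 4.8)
The plan is to argue by contradiction: assuming a nontrivial decomposition $\G = A + B$ (so $|A|,|B| \gs 2$), I will try to show that $A$ and $\G$ are forced to share a large common subgroup, in violation of $|A \cap \G| \ll |A|^{1-\eps}$. First the routine reductions. If $0 \in B$ then $A = A + 0 \subseteq A + B = \G$, whence $A \cap \G = A$ and $|A \cap \G| = |A| \gg |A|^{1-\eps}$ for large $A$, a contradiction; so $0 \notin B$ and every element of $B$ is invertible. Since $A + B \subseteq \G$ with $|A|,|B| \gs 2$, Theorem \ref{t:Shparlinskii} (its inclusion hypothesis, and the harmless $\{0\}$ extension, being met) gives $|A|,|B| = |\G|^{1/2+o(1)}$, so $|\G| = |A|^{2+o(1)}$. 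Because $\f_p^*$ is cyclic, $H := A \cap \G$ is the unique subgroup of order $\gcd(|A|,|\G|)$, so the hypothesis reads $|H| \ll |A|^{1-\eps}$ and the index $m := |A|/|H| \gg |A|^{\eps}$ is large.

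Next the multiplicative structure, which is what lets $\G$ being a subgroup (not merely a set) enter. For every $\la \in H$ one has $\la\G = \G$, hence $\G = \la\G = \la A + \la B = A + \la B$, so each $\la B$ is again a complement of $A$. More globally, scaling by an arbitrary $a \in A$ gives $a\G = A + aB$: every coset of $\G$ that $A$ meets is itself an additive sumset $A + aB$ of cardinality $|\G|$, and $A$ meets exactly $m$ such cosets, pairwise disjoint. With coset representatives $a_1,\dots,a_m \in A$, the subgroup $A\G = \bigsqcup_{i=1}^{m}(A + a_i B)$ has order $M := |A||\G|/|H|$. I would then estimate the additive energy $\E^{+}(A\G)$ two ways. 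For a lower bound one exploits that the \emph{same} summand $A$ occurs in all $m$ cosets, so the translates $A+t$ with $t \in \bigcup_i a_iB$ overlap heavily; concretely, from $A + b, A + b' \subseteq \G$ one gets $A + b' \subseteq \G \cap (\G - t)$ for $t = b - b' \neq 0$, i.e. $|\G \cap (\G - t)| \gs |A|$ for every nonzero $t \in B - B$, which feeds a robust lower bound on $\E^{+}(\G)$ and on the shared structure of $A\G$.

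For the upper bound, the role of the range condition becomes clear: $|\G| \ll p^{1-\eps/6}$ is what keeps $M = |A||\G|/|H|$ inside the window where a usable bound for the additive energy of a multiplicative subgroup holds — in this large regime via the Gauss-sum/Weil estimate, giving control of the shape $\E^{+}(A\G) \ll M^{3}/p + M^{5/2}$. Comparing the two estimates, and substituting $m \gg |A|^{\eps}$ together with $|\G| = |A|^{2+o(1)}$, I expect the admissible parameter range to collapse, forcing $|H| \gg |A|^{1-o(1)}$ and contradicting $|H| \ll |A|^{1-\eps}$.

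The main obstacle is exactly this energy comparison. The reductions above are soft and do not by themselves decide the sign of the inequality; indeed, a naive within-coset lower bound $\E^{+}(A\G) \gs m\,\E^{+}(\G)$ is too weak to beat the subgroup upper bound. The real work is to convert the heavy overlap coming from the shared summand $A$ into a lower bound on $\E^{+}(A\G)$ (or on a related incidence/collinear-triple count, in the spirit of the estimates developed later in this paper) that genuinely exceeds $M^{3}/p$ throughout the range $|\G| \lesssim p^{1-\eps/6}$, and to track the exponents so that the threshold lands precisely at $\eps/6$. This is where the arbitrariness of $B$ must be absorbed entirely by the two subgroup structures of $A$ and $\G$.
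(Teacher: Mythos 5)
Your reductions (ruling out $0\in B$, invoking Theorem \ref{t:Shparlinskii} to get $|A|,|B|=|\G|^{1/2+o(1)}$, identifying $H=A\cap\G$ with the subgroup of order $\gcd(|A|,|\G|)$, and observing $\G=A+\la B$ for $\la\in H$) are all correct, but the proof stops exactly where the work begins, and you say so yourself. Everything is reduced to an energy inequality for the subgroup $A\G$ that is never established, and the one concrete input you offer is provably insufficient: from $A+b,\,A+b'\subseteq\G$ you get $|\G\cap(\G-t)|\gs|A|$ for every nonzero $t\in B-B$, hence $\E^{+}(\G)\gs |B-B|\,|A|^{2}$; but $|B-B|\le|B|^{2}=|\G|^{1+o(1)}$ and $|A|^{2}=|\G|^{1+o(1)}$, so this never exceeds $|\G|^{2+o(1)}$, which is already of the order of the trivial diagonal contribution to $\E^{+}$ and a fortiori below any subgroup upper bound of the shape $M^{4}/p+M^{c}$ with $c>2$. (Your stated upper bound $M^{3}/p+M^{5/2}$ is also off: the main term for a set of size $M$ is $M^{4}/p$.) No mechanism is indicated by which the hypothesis $|\G\cap A|\ll|A|^{1-\eps}$, or the range condition $|\G|\ll p^{1-\eps/6}$, would enter the comparison, so there is no identifiable route to the contradiction.

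Note that the paper does not prove this proposition (it is quoted from \cite{SSV}), but the mechanism is reproduced in the proof of Corollary \ref{cor:subgroup_decomp_impossible_new}, and it is quite different from an energy comparison. One partitions $B$ by the cosets of $H=\G\cap A$, picks one representative $b_{j}$ from each of the $k$ nonempty pieces, and notes $(A\circ\G)(b_{j})\gs|A|-1$ since $A+b_{j}\subseteq\G$. Mit'kin's Lemma \ref{l:mitkin}, applied to these $k$ points lying in pairwise distinct cosets, gives $k|A|\ll(|\G||A|k^{2})^{1/3}$, i.e.\ $k\ll|\G|/|A|^{2}$; this is where the restriction on $|\G|$ in terms of $p$ is consumed, through the hypotheses of Lemma \ref{l:mitkin}. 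Then $|B|\le k|H|+1\ll|\G|\,|A|^{-1-\eps}$ by the intersection hypothesis, and $|\G|\le|A||B|$ forces $|\G|\ll|\G|\,|A|^{-\eps}$, a contradiction for $A$ sufficiently large. To salvage your plan you would need a lower bound for $\E^{+}(A\G)$ (or for a quantity like $\T$) that grows with $m=|A|/|H|$ strongly enough to beat the subgroup upper bounds throughout the stated range, and nothing in the proposal supplies it.
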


Now we can drop additional assumptions on the intersection $A$ and $\G$.

\begin{theorem}\label{t:subgroup_decomp_impossible_new}
Let $\eps \in (0,1]$ be a real number.
Let also $A, \G \subset \mathbb{F}_p^{*}$ be two sufficiently large multiplicative subgroups,
$|\G| \le p^{1/2-\eps}$,
and $B \subseteq \mathbb{F}_p$ be an arbitrary nonempty set.
Then $\G$, $\G \bigsqcup \{ 0 \}$ has no nontrivial representations as $\G=A+B.$
\end{theorem}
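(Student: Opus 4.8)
The plan is to reduce Theorem~\ref{t:subgroup_decomp_impossible_new} to the symmetric difference-set bound of Theorem~\ref{t:A-A_total}, thereby removing the intersection hypothesis $|\G\cap A|\ll |A|^{1-\eps}$ present in Proposition~\ref{p:subgroup_decomp_impossible}. Suppose, for contradiction, that $\G=A+B$ (the case $\G\bigsqcup\{0\}=A+B$ being handled identically) with $A$ a multiplicative subgroup, $B$ arbitrary, and $|A|,|B|>1$. The first step is to recover the classical size information: since $A+B\subseteq \G$, Theorem~\ref{t:Shparlinskii} gives $|A|\le |\G|^{1/2+o(1)}$, and a counting of the representation $|\G|=|A+B|$ together with $|A||B|\ge |A+B|=|\G|$ forces $|A|$ and $|B|$ to be genuinely large (both at least a small power of $|\G|$, up to the $o(1)$). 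The point of the decomposition $\G=A+B$ is that it is a genuine partition-type covering, so the additive energy and the local structure of $A$ inside $\G$ are tightly controlled.

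The heart of the argument is to extract a dilated copy of a difference set of a subgroup sitting inside a subgroup, so that \eqref{cond:A-A_total} applies. Fix any two elements $b_1,b_2\in B$; then $A+b_1,\ A+b_2\subseteq \G$, and since $A$ is a multiplicative subgroup we have $A=aA$ for every $a\in A$. The plan is to exploit translates of $A$ lying in $\G$ to produce, after multiplying by a suitable element of $\f_p^*$, an inclusion of the shape $A-A\subseteq \xi\G\bigsqcup\{0\}$ for some $\xi\in\f_p^*$. Concretely, from $A+b\subseteq\G$ for each $b\in B$ we get $(A+b)-(A+b')=A-A+(b-b')\subseteq \G-\G$; the difficulty is that $\G-\G$ need not be contained in a single dilate of $\G$. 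To circumvent this, I would instead look at the multiplicative structure: because $\G$ and $A$ are both subgroups and $A\subseteq \G/b$ for appropriate normalisation, one compares the two subgroups $A$ and $\G$ directly. If $A\subseteq \G$ (after a dilation), then $A-A\subseteq \G-\G$, but more usefully the covering $\G=A+B$ together with $A\le \G$ multiplicatively means $\G$ is a union of $|B|$ additive cosets of $A$; intersecting $\G$ with a line through the origin and using that each coset meets it boundedly, one extracts a dilate of $A-A$ landing in $\xi\G\sqcup\{0\}$.

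I expect the main obstacle to be precisely this extraction: passing from the bare additive identity $\G=A+B$ to a hypothesis of the form $A-A\subseteq \xi\G\bigsqcup\{0\}$ that feeds Theorem~\ref{t:A-A_total}. The natural device is the collinear-triples / incidence machinery advertised in the annotation, applied to the point set $A\times B$ or to $\G$ viewed through its cosets, so that an excess of additive coincidences (which the equality $\G=A+B$ guarantees, since every element of $\G$ has exactly one representation counted with the right weight) forces $A$ to differences into a single dilate of $\G$. Once the inclusion \eqref{cond:A-A_total} is in hand, Theorem~\ref{t:A-A_total} yields $|A|\ll|\G|^{1/3+o(1)}$ in the range $|\G|<p^{3/4}$, which for $|\G|\le p^{1/2-\eps}$ is compatible with $A$ being large only if $|A|$ is bounded, contradicting $|A|>1$ once $\G$ is sufficiently large. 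The bookkeeping that the exponent $1/3$ beats the lower bound $|A|\gg |\G|^{c}$ coming from $|A||B|\ge|\G|$ and $|B|\le|\G|^{1/2+o(1)}$ is the routine but essential final inequality, and it is where the restriction $|\G|\le p^{1/2-\eps}$ is consumed.
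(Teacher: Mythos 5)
Your reduction has a genuine gap at exactly the point you yourself flag as the ``main obstacle'': the passage from $\G=A+B$ to an inclusion $A-A\subseteq\xi\G\bigsqcup\{0\}$. No such inclusion follows from the decomposition hypothesis. From $A+b\subseteq\G$ for every $b\in B$ one only gets $A-A\subseteq\G-\G$, and $\G-\G$ is in general a union of very many dilates of $\G$ (of size roughly $\min\{|\G|^2,p\}$), not a single dilate. The devices you gesture at --- collinear triples, intersecting $\G$ with a line through the origin, the coset covering $\G=\bigcup_{b}(A+b)$ --- do not produce the containment, and the paper never establishes anything of this shape in the asymmetric setting; indeed the whole point of its Theorem \ref{t:sumsets_xi_eta} is to handle sums $C-D\subseteq\G\bigsqcup\{0\}$ with $C,D$ in (possibly different) dilates of one subgroup, precisely because the symmetric hypothesis \eqref{cond:A-A_total} is not available. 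Without that step Theorem \ref{t:A-A_total} cannot be invoked, so the final contradiction $|\G|^{1/2-o(1)}\ll|A|\ll|\G|^{1/3+o(1)}$ is never reached. (The surrounding bookkeeping --- $|A|,|B|\gg|\G|^{1/2-o(1)}$ from Theorem \ref{t:Shparlinskii} and $|A||B|\ge|\G|$ --- is fine, but it is not where the difficulty lies.)

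The paper's actual argument avoids this by changing the object: it sets $H=\G\cap A$, a multiplicative subgroup contained in both $A$ and $\G$, partitions $B$ according to which multiplicative cosets $\xi H$ it meets, and bounds the number $k$ of occupied cosets by $k\ll|\G|/|A|^2$ using Mit'kin's Lemma \ref{l:mitkin} applied to $\sum_{j}(A\circ\G)(b_j)$. Pigeonholing yields a piece $B_\xi\subseteq B\cap\xi H$ with $|B_\xi|\ge|B|/k$, and then $H+B_\xi\subseteq\G\bigsqcup\{0\}$ with both summands inside dilates of the \emph{same} subgroup $H$ --- exactly the hypothesis of Theorem \ref{t:sumsets_xi_eta}. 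That theorem (which rests on the operator machinery, the bound on $\T$ from Proposition \ref{p:sigma}, and the energy estimate of Theorem \ref{t:32/13}) gives $|H|^8|B_\xi|^4|\G|^2\ll|A|^8|S_\G(H-H)|\,\E^{+}(\G)\log^2|\G|$, and combining $|H|\gg|B||A|^2|\G|^{-1}$, $|S_\G(H-H)|\le|H||\G|$ and $|A|,|B|\gg|\G|^{1/2-o(1)}$ produces the contradiction. If you want to repair your plan, the missing idea is to pass to the intersection subgroup $H=\G\cap A$ and a single-coset piece of $B$, and to use the asymmetric sumset theorem rather than the symmetric difference theorem.
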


In the proof of Theorem \ref{t:subgroup_decomp_impossible_new} we refine condition (\ref{cond:A-A_total})
and replace it by a more flexible, namely,
\begin{equation}\label{cond:xi_eta_intr}
    \xi A + \eta A \subseteq \G \bigsqcup \{ 0 \} \,,
\end{equation}
where $\xi, \eta \in \f^*_p$ are arbitrary.
Moreover, one can consider large subsets of $\xi A$, $\eta A$ satisfying inclusion (\ref{cond:xi_eta_intr}) and even take different subgroups $A_1$, $A_2$ of comparable sizes, see Theorem \ref{t:sumsets_xi_eta} and Corollary \ref{cor:A+A} of section \ref{sec:proof}.
Besides one can deal with the intersections of the form $(\xi A + \eta A) \cap \G$ instead of inclusion
(\ref{cond:xi_eta_intr}), see Proposition \ref{p:sumsets_intersections} of section \ref{sec:CR}.

In \cite{LS} the following problem was considered.
Let as above $R \subset \f^*_p$ be the subgroup of quadratic residues.
%, that is the set of nonzero squares.
Can it be
%that
$R \doteq A- A$,
%, in other words
that is
any $x\in R$ is represented as a difference of two elements of $A$ in a unique way and $A-A \subseteq R \bigsqcup \{ 0 \}$?
The authors of \cite{LS} proved, in particular, that
%Then
for any $\xi \in \f^*_p$ one cannot has
$R \doteq \xi A-\xi A$, where $A$ is a multiplicative subgroup.
In section \ref{sec:CR}, we obtain a generalization, see Corollary \ref{c:CR} as well as remarks below the corollary.
% and Proposition \ref{p:CR}.

\begin{theorem}
    Let $A, \G \subset \mathbb{F}_p^{*}$ be two
    multiplicative subgroups.
    %, $|A| >1$.
    Suppose that $A-A = \xi \G \sqcup \{ 0 \}$ for some $\xi \in \mathbb{F}_p^{*}$
    and $(A\c A) (x) = c$ is a constant onto $\G$.
    Then $|A|^2-|A| = c|\G|$ and either $|\G| = O(1)$ or $|\G| \gg \frac{p}{\log p}$.
\label{t:LS_my_intr}
\end{theorem}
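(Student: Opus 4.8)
The plan is to treat the two assertions separately, deriving the first by a direct count and the second by confronting a cheap lower bound on $|A|$ with the upper bound supplied by Theorem~\ref{t:A-A_total}. For the identity $|A|^2-|A|=c|\G|$, observe that the number of ordered pairs $(a,a')\in A\times A$ with $a\neq a'$ is $\sum_{x\neq 0}(A\c A)(x)$, and by hypothesis the only nonzero values of $A\c A$ off the origin occur on $(A-A)\setminus\{0\}=\xi\G$, where $A\c A$ is constant and equal to $c$. Hence $|A|^2-|A|=\sum_{x\in\xi\G}(A\c A)(x)=c\,|\xi\G|=c|\G|$. Since we assume the \emph{equality} $A-A=\xi\G\bigsqcup\{0\}$, every element of $\xi\G$ is attained as a difference, so $c\ge 1$; consequently $|A|^2>|A|^2-|A|=c|\G|\ge|\G|$, i.e. $|A|>|\G|^{1/2}$. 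This single inequality is the only consequence of the constant-correlation hypothesis that the dichotomy will require. (One may also note that $A\subseteq\G$: the set $A-A$ is invariant under multiplication by $A$, so $\xi\G$ is $A$-invariant, which forces $A\G=\G$; this is not needed below.)

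For the dichotomy I would feed the inclusion $A-A\subseteq\xi\G\bigsqcup\{0\}$, which follows from the stated equality, into Theorem~\ref{t:A-A_total} and oppose its bound to $|A|>|\G|^{1/2}$. If $|\G|<p^{3/4}$, Theorem~\ref{t:A-A_total} gives $|A|\ll|\G|^{1/3+o(1)}$, so $|\G|^{1/2}<|A|\ll|\G|^{1/3+o(1)}$, forcing $|\G|^{1/6-o(1)}\ll 1$; as the $o(1)$ tends to $0$ with $|\G|$, this is impossible once $|\G|$ exceeds an absolute constant, whence $|\G|=O(1)$ throughout this range. In the complementary range $|\G|\ge p^{3/4}$, Theorem~\ref{t:A-A_total} yields $|A|\ll\min\{|\G|\log^{1/2}p/\sqrt p,\ \sqrt p\}$; using the first term together with $|A|>|\G|^{1/2}$ gives $|\G|^{1/2}\ll|\G|\log^{1/2}p/\sqrt p$, i.e. $\sqrt p\ll|\G|^{1/2}\log^{1/2}p$, which rearranges to $|\G|\gg p/\log p$. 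Combining the two ranges, either $|\G|=O(1)$ or $|\G|\gg p/\log p$, as claimed.

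I want to stress that this reduction outsources the genuine difficulty to Theorem~\ref{t:A-A_total}: granted that estimate, the present statement is a short squeezing argument whose only new inputs are the exact counting identity and the resulting bound $|A|>|\G|^{1/2}$. The main obstacle would reappear if one sought a self-contained route. The natural such attempt is Fourier analytic: the hypothesis is equivalent to $A\c A=|A|\delta_0+c\,\mathbb{1}_{\xi\G}$, whence $|\FF{A}(t)|^2=|A|+c\,\FF{\mathbb{1}_\G}(\xi t)\ge 0$ for all $t\neq 0$, giving the one-sided bound $\FF{\mathbb{1}_\G}(\xi t)\ge -|A|/c$ on the Fourier coefficients of the subgroup $\G$. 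Converting this positivity into the threshold $p/\log p$ via Gauss-sum estimates for $\FF{\mathbb{1}_\G}$ is precisely the hard step, and it is exactly what Theorem~\ref{t:A-A_total} already encapsulates; for that reason I would simply invoke the latter rather than redo the character-sum analysis.
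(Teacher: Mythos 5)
Your proof is correct, but it takes a genuinely different route from the paper's. The paper proves this statement as Corollary \ref{c:CR}: after the same trivial counting identity, it exploits the constancy hypothesis in an essential way, choosing the weight $g(x)=(A\c A)(x)/(\G\c\G)(x)$ (which is $\G$--invariant precisely because $(A\c A)$ is constant on the coset $\xi\G$), feeding it into Proposition \ref{p:weight_A} to get $\sum_x (A\c A)^2(x)/(\G\c\G)(x)\ll |A|^2\log|A|/|\G|$, and then estimating $(\G\c\G)$ by Lemma \ref{l:mitkin} when $|\G|<p^{3/4}$ and by Lemma \ref{l:C_for_subgroups} when $|\G|\ge p^{3/4}$; this yields the dichotomy and, as a bonus, the extra estimates (\ref{f:pds_1}) and (\ref{f:pds_2}) on $\E^{+}(A)$ and on $c$. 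You instead extract from the hypotheses only the single inequality $|A|>|\G|^{1/2}$ (via $c\ge 1$, which in fact needs only the surjectivity $A-A\supseteq\xi\G$ rather than constancy) and squeeze it against Theorem \ref{t:A-A_total}, which the paper has already established. Your reduction is shorter, makes transparent that the dichotomy is really a consequence of the difference-set containment plus the lower bound $|A|^2>|\G|$, and is logically legitimate since Theorem \ref{t:A-A_total} is proved independently earlier in the paper; what it does not recover are the quantitative conclusions (\ref{f:pds_1})--(\ref{f:pds_2}) of Corollary \ref{c:CR}, but those are not part of the statement as posed. Your closing Fourier-analytic remark is a fair diagnosis of where the real difficulty lives, though note that the paper's actual mechanism is not a character-sum positivity argument but the operator/collinear-triples bound on $\T(A)$.
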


Let us say a few words
%on
about the
methods of the proving of Theorem \ref{t:Shparlinskii} and similar results
%and
%as well as
and compare them  with
our new approach.
Suppose that for some set $S$, we have $S=A+B$.
The main observation of papers \cite{Sarkozy_residues}, \cite{DS_AD}, \cite{Shparlinski_AD}, see also \cite{Sh_energy}
is the following.
Take an
arbitrary positive integer $k$ and any elements $b_1,\dots,b_k\in B$.
Then by the definition of sumset, we have
\begin{equation}\label{f:A,D_inclusion}
    A \subseteq (S - b_1) \bigcap (S - b_2) \dots \bigcap (S-b_k)
    \,.
\end{equation}
Thus, if $S$ is a sumset then there are large intersections of $S$ with its shifts of form (\ref{f:A,D_inclusion}).
On the other hand, in the case of large multiplicative subgroups one can
use
%apply
analytical tools (see e.g. Lemma \ref{l:C_for_subgroups} below) to show that for
%any
$x_1,\dots,x_k \in \f_p$ there is a uniform upper bound
$$|(S + x_1) \bigcap (S + x_2) \dots \bigcap (S+x_k)| \ll p^{1/2 + o(1)} \,,$$
provided by $x_1,\dots,x_k$ are nonzero and pairwise distinct elements.
For smaller subgroups the
%following
analogous
result from \cite{V-S}
%\cite{SSV}
makes the job.

\begin{theorem}
    Let $\G\subseteq \f_p$ be a multiplicative subgroup,
    $k\ge 1$ be a positive integer, and $x_1,\dots,x_k$ be different nonzero elements.
    Let also
    $$
        32 k 2^{20k \log (k+1)} \le |\G|\,, \quad  p \ge 4k |\G|  ( |\G|^{\frac{1}{2k+1}} + 1 ) \,.
    $$
    Then
    \begin{equation}\label{f:main_many_shifts}
        |\G\bigcap (\G+x_1) \bigcap \dots (\G+x_k)| \le 4 (k+1) (|\G|^{\frac{1}{2k+1}} + 1)^{k+1} \,.
    \end{equation}
    The same holds if one replace $\G$ in (\ref{f:main_many_shifts}) by any cosets of $\G$.
\label{t:main_many_shifts}
\end{theorem}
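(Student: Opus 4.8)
The plan is to prove (\ref{f:main_many_shifts}) by Stepanov's polynomial method, run simultaneously on the $k+1$ cosets $\G, \G+x_1,\dots,\G+x_k$. Write $t=|\G|$ and recall that $\G=\{z\in\f^*_p:z^t=1\}$. Setting $x_0=0$ and $I=\G\cap\bigcap_{i=1}^k(\G+x_i)$, a point $y$ lies in $I$ precisely when $(y-x_i)^t=1$ for every $i=0,1,\dots,k$; put $M=|I|$. The only issue is to beat the trivial bound $M\le t$, and the gain has to come from forcing a single auxiliary polynomial to vanish to high order at all $M$ points at once.

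Concretely, I would fix a vanishing order $\ell$ and a degree bound $\Delta$ and look for a nonzero $F\in\f_p[X]$ with $\deg F\le\Delta$ that vanishes to order at least $\ell$ at each point of $I$. Granting this, counting zeros with multiplicity gives $\ell M\le\deg F\le\Delta$, hence $M\le\Delta/\ell$; the exponent $\frac{k+1}{2k+1}$ is exactly the familiar $\frac{j}{2j-1}$ law for intersecting $j=k+1$ cosets of a subgroup, and it falls out once $\ell$ and $\Delta$ are balanced. To build $F$ I would take a linear combination of the products $\prod_{i=0}^k(X-x_i)^{a_i}$ over a box of multi-indices, and exploit the relations $(X-x_i)^t\equiv1$ on $I$ together with the leading-term cancellations in $X^t-(X-x_i)^t$ (a polynomial of degree only $t-1$) to hold the true degree of $F$ well below the naive value $\ell t$ that would give no saving. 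The vanishing-to-order-$\ell$ requirement is a homogeneous linear system of $\ell M$ conditions on the coefficients (the values of $F$ and of its first $\ell-1$ derivatives at the points of $I$), so a nontrivial coefficient vector exists as soon as the number of free coefficients, of size about $n^{k+1}$ for a box of side $n$, exceeds $\ell M$.

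The main obstacle is the one point a dimension count cannot settle: that the $F$ so produced is not the zero polynomial. This is where the hypotheses are spent. The standard route is a Wronskian / leading-coefficient argument showing that, after reduction by the relations, the chosen building blocks stay linearly independent as genuine polynomials, so that a nonzero coefficient vector really does yield $F\not\equiv0$. The lower bound $32k\,2^{20k\log(k+1)}\le|\G|$ supplies the room for the $(k+1)$-dimensional box and absorbs the multinomial overhead from the $k$-fold reduction, while $p\ge 4k|\G|(|\G|^{1/(2k+1)}+1)$ keeps $\Delta<p$, so that reduction modulo $X^t-1$ never interacts with $X^p-X$ and the zero-count over $\f_p$ is legitimate. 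I expect the non-vanishing, and carrying the derivative conditions cleanly through the degree reductions, to be the technical heart; the final choice of $\ell$ and $\Delta$ is then a routine optimization yielding $M\ll t^{(k+1)/(2k+1)}$.

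Finally, the coset version needs no new idea: a coset $\eta\G$ together with its shifts is carried to $\G$ and its shifts by the substitution $X\mapsto\eta X$, an affine change of variable that sends $t$-th-power relations to $t$-th-power relations and keeps the shifts distinct, so the same polynomial is available and the identical bound results.
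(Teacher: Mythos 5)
The paper contains no proof of Theorem \ref{t:main_many_shifts}: the statement is imported verbatim from \cite{V-S} (Vyugin--Shkredov) and is used here as a black box, so there is no in-paper argument to compare against. What can be assessed is whether your plan reproduces the known proof from \cite{V-S}, and whether it is complete. You do identify the correct method --- Stepanov's polynomial method applied simultaneously to the $k+1$ cosets, an auxiliary polynomial vanishing to order $\ell$ at each point of the intersection, a parameter count for existence, and the balance $\ell M\le\Delta$ yielding the exponent $\frac{k+1}{2k+1}$ --- and this is indeed the architecture of the cited proof.

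As a proof, however, your text has a genuine gap, and you name it yourself: the non-vanishing of $F$. In this class of problems the dimension count is the trivial part; the entire content lies in (i) the precise choice of building blocks, which should be $x^{a_0}\prod_{i}\bigl((x-x_i)^{t}\bigr)^{a_i}$ --- powers of the functions $(x-x_i)^t$ that are constantly $1$ on the intersection, not of $(x-x_i)$ itself --- chosen so that the reduced polynomial has degree far below $\ell t$ while the order-$\ell$ vanishing is still forced by linear conditions on the coefficients, and (ii) the linear independence of these reduced monomials, which for $k\ge 2$ is a delicate combinatorial argument and is exactly where the hypothesis $32k2^{20k\log(k+1)}\le|\G|$ with its explicit constants is spent. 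Deferring this to ``a Wronskian / leading-coefficient argument'' does not discharge it. Two further points are unaddressed: detecting multiplicity $\ell$ via derivatives in characteristic $p$ requires Hasse derivatives (or a check that no relevant binomial coefficients vanish), which is part of what the condition on $p$ buys beyond keeping $\Delta<p$; and the theorem's final sentence allows \emph{different} cosets $\eta_i\G$ in the different factors, a case your single dilation $X\mapsto\eta X$ does not cover (though the polynomial method handles it directly by replacing the relations $(y-x_i)^t=1$ with $(y-x_i)^t=\eta_i^t$). So the strategy is right, but the proposal is a plan rather than a proof: its acknowledged ``technical heart'' is precisely the theorem's difficulty.
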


Thus,
%the result
theorem above
asserts us that
$|\G\bigcap (\G+x_1) \bigcap \dots (\G+x_k)| \ll_k |\G|^{\frac{1}{2}+\alpha_k}$,
%%$|R\bigcap (R+\mu_1) \bigcap \dots \bigcap (R+\mu_k)| \ll_k |R|^{\frac{k+1}{2k+1}}$,
provided by
$1 \ll_k |\G| \ll_k p^{1-\beta_k}$,
%$1 \ll_k |R| \ll_k p^{2/3-\alpha_k}$,
where $\alpha_k, \beta_k$ are some sequences of positive numbers, and $\alpha_k, \beta_k \to 0$, $k\to \infty$.
A little bit better
bounds
%bounds for the sequences
%dependence on parameters
can be found
%are contained
in \cite{SSV}.

So, using inclusion (\ref{f:A,D_inclusion}), combining with the analytical tools or Theorem \ref{t:main_many_shifts},
we obtain Theorem \ref{t:Shparlinskii}.
Now, the main question is: can we prove primitivity of multiplicative subgroups in this way, i.e.
just
%using
combining
upper
bounds on intersections
%above?
of the same strength as
in (\ref{f:main_many_shifts}) and some random properties of $\G$ as smallness of its Fourier coefficients,
%say?
e.g.?
As was pointed in \cite{Sh_Sarkozy}, \cite{Sh_energy} the answer is no.
To see this consider so--called "random sumsets"\, example from
%, see
\cite{Sh_energy}.
Let $A\subseteq \f_p$ be a random set of size $o(\sqrt{p})$,
%such that $|A+A| = o(p)$,
say.
The set $A+A$ is what we call a random sumset.
It easy to see that $A+A$ has rather random behaviour, excepting nonrandom property (\ref{f:A,D_inclusion}), of course.
Thus, to prove that a set is primitive we cannot use just random properties of the set or upper bounds for the intersections as in (\ref{f:main_many_shifts}).
We need in different tools.
For example,
the first author observed
in \cite{Sh_Sarkozy}
that quadratic residues $R$ is an (almost) perfect difference
%applied the arguments
set, that is the function $f(x) = |\{ r_1-r_2 ~:~ r_1,r_2 \in R \}$ is (almost) constant on $\f_p \setminus \{ 0 \}$.
Certainly, random sets have no such a property.
This allowed him to prove that $R$ cannot be represented in the form $A+A$ and even to be close to
%the set
$A+A$
in some sense.

In the article we use another
nonrandom
property  of multiplicative subgroups.
Suppose for simplicity that we are in a symmetric situation, that is $A-A \subseteq \G \bigsqcup \{ 0 \}$
and $|A| \sim |\G|^{1/2 + o(1)}$.
One can assume that $A^*\subseteq \G$, where $A^* = A\setminus \{0\}$ and $1\in A^*$.
Then it is easy to see that $A^*/A^* \subseteq \G \cap (\G+1)$.
If $|\G| < p^{3/4}$ then in view of Theorem \ref{t:main_many_shifts} it gives us $|A^*/A^*| \le |\G \cap (\G+1)| \ll |\G|^{2/3}$
and the bound implies a non--trivial lower bound for the multiplicative energy of
$A$, namely,
$\E^{\times} (A) \ge |A|^{8/3+o(1)}$, see the definition in section \ref{sec:definitions}.
Certainly, a random set $A$ has no such a property and hence we have separated from the random sumset case.
Unfortunately, the lower bound for the multiplicative energy is not enough to prove
%Theorems \ref{t:AD_subgroups}, \ref{t:A-A}
our main results
because of weakness of estimate (\ref{f:main_many_shifts}).
In the proof we use a stronger lower bound for average value of common energies
$\T (A) := \sum_{a_1,a_2 \in A} \E^\times (A-a_1,A-a_2)$, see section \ref{sec:fr}.
This quantity appears in papers \cite{Jones_PhD}, \cite{R_Minkovski}, \cite{Sh_3G} and others.
It was showed in  \cite{Sh_3G} that, in contrary,
%such quantity
$\T (A)$
is small for any multiplicative subgroup $A$
and we arrive to a contradiction.
Thus, in principle, the methods of the paper work for arbitrary sets $A$ with small $\T(A)$.
For example, they work
when $A$ is an arithmetic progression,
for the contrary to the subgroup case
(%in contrary,
on the other hand,
we are crucially need in the fact that the container set $\G$ is a subgroup).
%fact that  $\la A/A \subseteq \G \cap (\G+\la)$ for any $\la \in \G$.
Finally,
%we need in the upper bound for size of subgroup
we have deal with subgroups of small size only
to separate from the case of the largest subgroup, namely,
$\f^*_p$ which is, clearly, additively decomposable in many ways.
%A stronger assumption $|\G| \le p^{2/3-\eps}$

The paper is organized as follows.
In section \ref{sec:definitions} we explain  some of notation that will be used later
and give
%basic results.
auxiliary results on matrices.
Section \ref{sec:operators} is also devoted to a notation required
for
the operator technique from \cite{SS1,s_mixed}.
In the next section \ref{sec:fr} we prove Theorem \ref{t:A-A_total}.
The methods here are elementary and do not require eigenvalues
%method
approach
from \cite{SS1,s_mixed} as well as the
notation from
%section \ref{sec:operators}.
the previous section.
Unfortunately, these elementary observations work just in the difference case  (\ref{cond:A-A_total}).
The general situation is considered in section \ref{sec:proof}.
In particular, here we
%obtain
get
Theorem \ref{t:subgroup_decomp_impossible_new}.
The method of the proving allows us to obtain Theorem \ref{t:LS_my_intr} in next section \ref{sec:CR}.
%and also we describe  all exceptional subgroups
%with
%%having the property
%$\G \doteq A-A$.
% here.
In the last section \ref{sec:T_A_f_p} we discuss further properties of the quantity $\T(A)$
and prove, in particular, a "dual"\, (that is replacing the addition by the multiplication  and vice versa) sum--products estimate, concerning the
sum
%quantity
$\sum_{c\in C} \E^\times (A-c,B)$, see Proposition \ref{p:semi_T}
below.

The author
%I.S.
is grateful to Dmitry Zhelezov, Elena Solodkova
%and
%Elena Solodkova
for useful discussions
and
%as well as
Peter Hegarty as well as G\"{o}teborg University and Chalmers University for their hospitality.
%and, especially, Tomasz Schoen for very useful and fruitful explanations and  discussions.

\section{Notation and auxiliary results}
\label{sec:definitions}

%We conclude with few comments regarding the notation used in this paper.
%Finally, with a slight abuse of notation
In the paper we use the same letter to denote a set $S\subseteq \f_p$
and its characteristic function $S:\f_p \rightarrow \{0,1\}.$
By $|S|$ denote the cardinality of $S$.

Let $f,g : \f_p \to \C$ be two functions.
Put
\begin{equation}\label{f:convolutions}
    (f*g) (x) := \sum_{y\in \f_p} f(y) g(x-y) \quad \mbox{ and } \quad
        %(f\circ g) (x) := \sum_{y\in \Gr} \ov{f(y)} g(y+x)
        (f\circ g) (x) := \sum_{y\in \f_p} f(y) g(y+x)
\end{equation}
If $\gamma \in \f_p^*$ then $f^\gamma (x) := f(\gamma x)$.
%Write
Put
$\E^{+}(A,B)$ for the {\it additive energy} of two sets $A,B \subseteq \f_p$
(see e.g. \cite{TV}), that is
$$
    \E^{+} (A,B) = |\{ a_1+b_1 = a_2+b_2 ~:~ a_1,a_2 \in A,\, b_1,b_2 \in B \}| \,.
$$
If $A=B$ we simply write $\E^{+} (A)$ instead of $\E^{+} (A,A).$
Clearly,
\begin{equation*}\label{f:energy_convolution}
    \E^{+} (A,B) = \sum_x (A*B) (x)^2 = \sum_x (A \circ B) (x)^2 = \sum_x (A \circ A) (x) (B \circ B) (x)
    \,.
\end{equation*}
Note also that
\begin{equation}\label{f:E_CS}
    \E^{+} (A,B) \le \min \{ |A|^2 |B|, |B|^2 |A|, |A|^{3/2} |B|^{3/2} \} \,.
\end{equation}
In the same way define the {\it multiplicative energy} of two sets $A,B \subseteq \f_p$
$$
    \E^{\times} (A,B) = |\{ a_1 b_1 = a_2 b_2 ~:~ a_1,a_2 \in A,\, b_1,b_2 \in B \}| \,.
$$
Certainly, multiplicative energy $\E^{\times} (A,B)$ can be expressed in terms of multiplicative convolutions,
similar to (\ref{f:convolutions}).

Denote by
$$ \Cf_{k+1} (f_1,\dots,f_{k+1}) (x_1,\dots, x_{k})$$
the function
$$
    %\Cf_k(F) (x) =
    \Cf_{k+1} (f_1,\dots,f_{k+1}) (x_1,\dots, x_{k}) = \sum_z f_1 (z) f_2 (z+x_1) \dots f_{k+1} (z+x_{k}) \,.
$$
Thus, $\Cf_2 (f_1,f_2) (x) = (f_1 \circ f_2) (x)$.
%Put $C_1 (f) = \sum_z f(z)$.
If $f_1=\dots=f_{k+1}=f$ then write
$\Cf_{k+1} (f) (x_1,\dots, x_{k})$ for $\Cf_{k+1} (f_1,\dots,f_{k+1}) (x_1,\dots, x_{k})$.

\bigskip

We conclude by two auxiliary results.
The first one is from the ordinary theory of matrix inequalities, see e.g. \cite{H-J}.

\begin{lemma}
    Let $M$ be a normal $(n\times n)$--matrix with eigenvalues $\mu_1,\dots,\mu_n$,
    and let $f$ be an arbitrary convex function of $n$ real variables.
    Then
$$
    \max_{ x_1, \dots, x_n } f( \langle M x_1, x_1 \rangle, \dots, \langle M x_n, x_n \rangle)
        =
            \max_{i_1,\dots, i_n} f(\mu_{i_1},\dots,\mu_{i_n}) \,,
$$
where the maximum on the left-hand side is taken over all orthonormalized systems of
vectors $x_1,\dots,x_n$, and the right-hand maximum over an arbitrary permutation of the
numbers $\{1,2,\dots,n\}$.
\label{l:convex_eigenvalues}
\end{lemma}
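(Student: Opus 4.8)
The plan is to reduce the identity to the Birkhoff--von Neumann description of doubly stochastic matrices together with the convexity of $f$. Since $M$ is normal, I would first diagonalize it: write $M = U D U^{*}$ with $U$ unitary and $D = \mathrm{diag}(\mu_1,\dots,\mu_n)$ (in the applications the spectrum is real, so that $f$ is evaluated at genuine real arguments). Any orthonormalized system $x_1,\dots,x_n$ of $n$ vectors is in fact an orthonormal basis, and setting $y_i = U^{*} x_i$ one has
$$
\langle M x_i, x_i \rangle = \langle D y_i, y_i \rangle = \sum_{j=1}^{n} |y_{ij}|^2 \mu_j \,,
$$
where $y_{ij}$ is the $j$-th coordinate of $y_i$.

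Next I would observe that the matrix $P = (P_{ij})$ with $P_{ij} = |y_{ij}|^2$ is doubly stochastic. Indeed, each row sums to $\|y_i\|^2 = 1$ because $U^{*}$ is unitary and $\|x_i\| = 1$, while each column sums to $\sum_i |\langle y_i, e_j\rangle|^2 = 1$ by Parseval's identity, using that $\{y_i\}$ is again an orthonormal basis. Writing $\mu = (\mu_1,\dots,\mu_n)$, the vector of diagonal values equals $P\mu$, so by the Birkhoff--von Neumann theorem $P = \sum_k \lambda_k \Pi_k$ is a convex combination of permutation matrices $\Pi_k$, whence $P\mu$ lies in the convex hull of the points $\{\Pi\mu\}$ obtained by permuting the eigenvalues.

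The inequality ``$\le$'' then follows at once from convexity:
$$
f(\langle M x_1, x_1\rangle,\dots,\langle M x_n, x_n\rangle) = f(P\mu) \le \max_{\Pi} f(\Pi\mu) = \max_{i_1,\dots,i_n} f(\mu_{i_1},\dots,\mu_{i_n}) \,.
$$
For the reverse inequality I would take $x_1,\dots,x_n$ to be the eigenvectors of $M$ listed in the order realizing the maximizing permutation $\sigma$; then $\langle M x_i, x_i\rangle = \mu_{\sigma(i)}$, the right-hand maximum is attained, and the two maxima coincide.

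The main obstacle is the double stochasticity step together with the invocation of Birkhoff--von Neumann, in particular checking the column-sum condition via Parseval; the convexity conclusion and the attainment of the maximum by eigenvectors are then routine. A minor technical point to keep clear is that the lemma is applied to matrices with real spectrum, so that the quantities $\langle M x_i, x_i\rangle$ are real and the convex function $f$ receives real arguments.
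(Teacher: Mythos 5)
Your argument is correct and complete. Note that the paper itself gives no proof of this lemma at all: it is quoted as a standard fact with a pointer to Horn and Johnson, so there is nothing in the source to compare against line by line. Your route --- diagonalize $M=UDU^{*}$, observe that $P_{ij}=|y_{ij}|^2$ is doubly stochastic (rows by normality of the $x_i$, columns by Parseval for the orthonormal basis $\{y_i\}$), invoke Birkhoff--von Neumann to place the vector of diagonal values $P\mu$ in the convex hull of the permuted spectra, and finish with convexity plus the eigenvector choice for attainment --- is precisely the classical proof of this statement, so you have in effect supplied the missing reference's argument rather than an alternative to it. Your closing caveat is the right one to flag: for a normal but non-Hermitian $M$ the quantities $\langle Mx_i,x_i\rangle$ and the $\mu_j$ are complex, so ``convex function of $n$ real variables'' does not literally typecheck; either one restricts to the Hermitian case (which is all the paper ever uses, since $\oT^g_\G$ with $g$ real and even is real symmetric), or one reads $f$ as convex on $\C^n\cong\R^{2n}$, in which case your Birkhoff--von Neumann step goes through verbatim because $P\mu$ still lies in the convex hull of $\{\Pi\mu\}$.
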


The second
%one
lemma
is also rather standard, see Theorem 2.5.4 of \cite{H-J}.

\begin{lemma}
     Let $M = (m_{ij})$ be any $(n\times n)$--matrix with eigenvalues $\mu_1,\dots,\mu_n$.
     Then
$$
    \sum_{j=1}^n |\mu_j|^2 \le \sum_{i,j=1}^n  |m_{ij}|^2 \,,
$$
    and the equality iff $M$ is a normal matrix.
\label{l:sum_of_squares}
\end{lemma}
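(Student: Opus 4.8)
The plan is to use Schur's unitary triangularization together with the unitary invariance of the Frobenius norm. First I would observe that the right-hand side is exactly $\tr (M^* M) = \sum_{i,j=1}^n |m_{ij}|^2$, where $M^*$ denotes the conjugate transpose of $M$. By Schur's theorem (Theorem 2.5.4 of \cite{H-J}) every complex matrix can be written as $M = U T U^*$ with $U$ unitary and $T = (t_{ij})$ upper triangular, the diagonal entries of $T$ being precisely the eigenvalues $\mu_1,\dots,\mu_n$ in some order.

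Since $\tr (M^* M)$ is invariant under the substitution $M \mapsto U M U^*$, by cyclicity of the trace and $U^* U = I$, I would then compute
$$
    \sum_{i,j=1}^n |m_{ij}|^2 = \tr (T^* T) = \sum_{i \le j} |t_{ij}|^2 = \sum_{j=1}^n |\mu_j|^2 + \sum_{i<j} |t_{ij}|^2 \,.
$$
As the last sum is nonnegative, the claimed inequality follows immediately.

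For the equality case I would note that equality holds iff $t_{ij} = 0$ for all $i<j$, that is iff $T$ is diagonal. If $T$ is diagonal then $M = U T U^*$ is clearly normal, since $M M^* = U T T^* U^* = U T^* T U^* = M^* M$. Conversely, if $M$ is normal then so is $T = U^* M U$, and a normal upper triangular matrix must be diagonal: comparing the $(1,1)$ entries of $T T^*$ and $T^* T$ forces $t_{1k} = 0$ for $k>1$, and one finishes by induction down the diagonal. Hence equality holds precisely when $M$ is normal.

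The only nontrivial ingredient is the Schur factorization itself, which I would quote rather than prove; everything else is a direct computation. Thus I expect no real obstacle beyond the short induction establishing that a normal triangular matrix is forced to be diagonal.
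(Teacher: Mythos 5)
Your argument is correct and is exactly the standard proof of Schur's inequality via unitary triangularization; the paper itself gives no proof of this lemma, simply citing Theorem 2.5.4 of Horn--Johnson, which is the same argument you have written out. Both the inequality and the characterization of equality (normal upper triangular implies diagonal, by the induction you describe) are handled properly, so there is nothing to add.
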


%Also, we need in a general result from \cite{R-N_R_S}.

%\begin{theorem}
%\label{t:sum-prod}
%    Let $A,B,C\subseteq F$, let $M = \max(|A|,|BC|).$ Suppose that $|A||B||BC| \ll p^2$. Then
%\begin{equation}\label{f:sum-prod_energy}
%    \E^{+} (A,C) \ll (|A||BC|)^{3/2} |B|^{-1/2} + M |A||BC| |B|^{-1} \,.
%\end{equation}
%\end{theorem}

We finish the section by two results from additive combinatorics.
The first one is the famous Balog--Szemer\'{e}di--Gowers theorem.

\begin{theorem}
    Let $A,B \subseteq \Gr$ be two sets such that $\E^{+} (A,B) \ge |A|^{3/2} |B|^{3/2} / K$.
    Then there are $x,y\in \Gr$ and a symmetric set $H\subseteq \Gr$ with
    $|A \cap (H+x)|, |B \cap (H+y)| \gg K^{-M} |H|$, further,
    $|A|, |B| \ll K^{M} |H|$ and $|H+H| \ll K^{M} |H|$.
    Here $M>0$ is an absolute constant.
\label{t:BSzG_A-B}
\end{theorem}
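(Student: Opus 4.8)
The statement is the Balog--Szemer\'{e}di--Gowers theorem, so the plan is to first extract from the energy hypothesis large subsets $A' \subseteq A$ and $B' \subseteq B$ whose difference set is small, and then repackage this small--difference--set information into a single symmetric approximate group $H$ via the Pl\"{u}nnecke--Ruzsa machinery. The whole argument is insensitive to the precise powers of $K$, which I track only as $K^{O(1)}$.

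First I would pass to popular differences. Writing $r(x) = (A\c B)(x)$, so that $\sum_x r(x) = |A||B|$ and $\sum_x r(x)^2 = \E^{+}(A,B) \gs |A|^{3/2}|B|^{3/2}/K$, I set $P = \{x : r(x) \gs |A|^{1/2}|B|^{1/2}/(2K)\}$. A one--line computation shows that the unpopular $x$ contribute at most $\E^{+}(A,B)/2$ to the energy, so $\sum_{x\in P} r(x)^2 \gs \E^{+}(A,B)/2$; since $r(x) \le \sqrt{|A||B|}$ this forces the bipartite graph $G \subseteq A\times B$ with edges $a\sim b \iff a-b \in P$ to have at least $|A||B|/(2K)$ edges. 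This reduces the problem to a purely combinatorial statement about a dense bipartite graph.

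The core step is the graph Balog--Szemer\'{e}di--Gowers lemma, which I would prove by the dependent--random--choice / path--counting method: from the edge density $\gg 1/K$ one produces subsets $A'\subseteq A$, $B'\subseteq B$ of densities $\gg K^{-O(1)}$ inside $A,B$ such that, after a further refinement, all pairs $(a',b') \in A'\times B'$ are joined by $\gg |A||B|/K^{O(1)}$ paths $a' \sim b_1 \sim a_1 \sim b'$ of length three in $G$. Each such path exhibits the difference as $a'-b' = (a'-b_1) - (a_1 - b_1) + (a_1-b')$, an element of $P-P+P$ carrying a prescribed intermediate structure, and double counting these representations against the total number of available triples bounds $|A'-B'| \ll K^{O(1)}\sqrt{|A||B|}$. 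This extraction of a genuinely small difference set out of the merely statistical information ``many paths'' is the main obstacle, and it is where all the loss in $K$ is incurred.

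Finally I would convert this into the asserted approximate group. Since $|A'| \gg |A|/K^{O(1)}$, $|B'| \gg |B|/K^{O(1)}$ and $|A'-B'| \ll K^{O(1)}\sqrt{|A||B|}$, the Ruzsa triangle inequality gives $|A'-A'|, |B'-B'| \ll K^{O(1)}\max(|A'|,|B'|)$, and Pl\"{u}nnecke--Ruzsa controls every iterated sum of these sets. I then set $H := (A'-A')\cup(B'-B')$, which is symmetric, satisfies $|H+H| \ll K^{O(1)}|H|$ (expanding $H+H$ into the four sumsets $2A'-2A'$, $A'-A'+B'-B'$ and $2B'-2B'$, each of size $\ll K^{O(1)}|H|$), and obeys $|A|,|B| \ll K^{O(1)}|H|$. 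Choosing $x = a_0 \in A'$ and $y = b_0 \in B'$ one has $A' \subseteq H + a_0$ and $B' \subseteq H + b_0$, whence $|A\cap(H+x)| \ge |A'|$ and $|B\cap(H+y)| \ge |B'|$ are both $\gg K^{-O(1)}|H|$. This last packaging step is routine bookkeeping once the small difference set is in hand.
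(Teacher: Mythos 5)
The paper does not prove this statement at all: Theorem \ref{t:BSzG_A-B} is quoted as ``the famous Balog--Szemer\'{e}di--Gowers theorem'' and used as a black box (its only role is in the proof of Proposition \ref{p:semi_T}), so there is no internal argument to compare yours against. Judged on its own, your sketch is the standard and correct route: the passage to the popular difference set $P$ is right (the unpopular $x$ contribute at most $\frac{|A|^{1/2}|B|^{1/2}}{2K}\sum_x (A\c B)(x) \le \E^{+}(A,B)/2$, and $|P| \ll K\sqrt{|A||B|}$), the graph-theoretic BSG lemma via paths of length three is the correct key lemma (and the double count works because a triple $(p_1,p_2,p_3)\in P^3$ determines $d=p_1-p_2+p_3$, so the $\gg |A||B|K^{-O(1)}$ triples attached to distinct $d\in A'-B'$ are disjoint, giving $|A'-B'| \le |P|^3 K^{O(1)}/(|A||B|) \ll K^{O(1)}\sqrt{|A||B|}$), and the final packaging through Ruzsa's triangle inequality and Pl\"{u}nnecke--Ruzsa is routine.

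Two small points you leave implicit. First, the heaviest step --- producing $A',B'$ of relative density $K^{-O(1)}$ so that \emph{every} pair is joined by many paths of length three --- is only named, not proved; the dependent-random-choice argument does deliver it, but that is where the actual work lies. Second, your bounds such as $|A'-A'+B'-B'| \ll K^{O(1)}|H|$ and the simultaneous conclusions $|A| \ll K^{O(1)}|H|$ and $|B| \ll K^{O(1)}|H|$ for a single $H$ tacitly use that $|A|$ and $|B|$ are comparable. This is true, but it deserves a line: from $\E^{+}(A,B) \le \min\{|A|^2|B|,\,|A||B|^2\}$ (inequality (\ref{f:E_CS}) of the paper) and the hypothesis one gets $\max\{|A|,|B|\} \le K^2\min\{|A|,|B|\}$, after which all your $K^{O(1)}$ bookkeeping goes through. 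With those two remarks your proposal is a complete and correct outline of the theorem the paper imports.
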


The second result is due to D. Zhelezov \cite{Z_A(A+1)}.

\begin{theorem}
    Let $A,B,C \subset \F_p$ be three sets, $|A|=|B|=|C| \le \sqrt{p}$.
    Then for any fixed $d\neq 0$ holds
$$
    \max\{ |AB|, |(A+d)C|\}\gg |A|^{1+1/26} \,.
$$
\label{t:Z}
\end{theorem}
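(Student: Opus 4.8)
The plan is to argue by contradiction: assume that $\max\{|AB|,|(A+d)C|\}\le |A|^{1+\delta}$ for a small parameter $\delta$, and show that this forces $\delta\ge 1/26$. The starting observation is that small product sets force large multiplicative energies. Writing $|A|=|B|=|C|=:N$, Cauchy--Schwarz gives
$$
\E^\times(A,B)\ge \frac{|A|^2|B|^2}{|AB|}\ge N^{3-\delta}
\qquad\text{and}\qquad
\E^\times(A+d,C)\ge N^{3-\delta}\,.
$$
Thus both $A$ and its additive translate $A+d$ would carry almost the maximal possible multiplicative structure. The heuristic driving the whole argument is the sum--product principle: a set and a nontrivial additive shift of it cannot simultaneously be close to being multiplicatively structured, so the two energy lower bounds above ought to be incompatible once $\delta$ is small enough.

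To turn this heuristic into a proof I would first apply the multiplicative analogue of the Balog--Szemer\'edi--Gowers theorem (Theorem~\ref{t:BSzG_A-B}, with $+$ replaced by $\times$) to each energy bound, with parameter $K=N^{\delta}$. This produces large subsets $A_1\subseteq A$ and $A_2\subseteq A+d$ of size $\gg N^{1-O(\delta)}$, together with multiplicative approximate groups $H_1,H_2$ of comparable size and multiplicative translates $\xi,\eta$, such that $A_1$ is concentrated on the coset $\xi H_1$ and $A_2$ on the coset $\eta H_2$, each $H_i$ having small multiplicative doubling $|H_i\cdot H_i|\ll N^{O(\delta)}|H_i|$. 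Passing to the set of those $a\in A$ for which both $a$ and $a+d$ survive these refinements, I obtain $A'\subseteq A$ with $|A'|\gg N^{1-O(\delta)}$ lying inside the intersection $\xi H_1\cap(\eta H_2-d)$ of a multiplicative coset with an additively shifted multiplicative coset.

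The contradiction then comes from bounding this intersection. The constraint $\xi h_1=\eta h_2-d$ with $h_i\in H_i$ is a purely sum--product relation, and the number of its solutions is controlled by a standard point--line incidence bound in $\F_p^2$: one counts incidences between the point set $A'\times C$ and a family of lines determined by $A+d$ (equivalently, one counts collinear triples in the relevant configuration). Here the hypothesis $N\le\sqrt{p}$ is exactly what keeps the incidence estimate in its non--degenerate range. Feeding in the sizes produced above shows that $|A'|$ cannot reach $N^{1-O(\delta)}$ unless the original product sets were already large; optimizing the resulting inequality in $\delta$ yields the threshold $1/26$.

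The main obstacle is precisely this last step, namely extracting a \emph{quantitatively sharp} incompatibility between the multiplicative structure of $A$ and that of $A+d$. The energy and BSzG inputs are soft and lose only $N^{O(\delta)}$ factors, but they do not by themselves rule out the configuration; one genuinely needs a sum--product input (an incidence theorem, i.e.\ a bound on collinear triples) whose exponent is what produces the precise constant $1/26$. A secondary technical difficulty is bookkeeping: the $N^{o(1)}$ and $N^{O(\delta)}$ losses incurred in the BSzG passage must be tracked with an explicitly \emph{linear} dependence on $\delta$ so that they do not swallow the gain $N^{1/26}$, which is what makes the final optimization close.
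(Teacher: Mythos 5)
First, note that the paper does not actually prove Theorem~\ref{t:Z}: it is imported verbatim from Zhelezov's preprint \cite{Z_A(A+1)}, so there is no in-paper argument to compare yours against. Judged on its own, your proposal is a plausible high-level strategy but not a proof, and it contains two genuine gaps. The first is structural: after applying the multiplicative Balog--Szemer\'edi--Gowers theorem separately to the two energy bounds $\E^\times(A,B)\ge N^{3-\delta}$ and $\E^\times(A+d,C)\ge N^{3-\delta}$, you obtain subsets $A_1\subseteq A$ and $A_2\subseteq A+d$ each of density only $\gg K^{-M}$ in their ambient sets, with $K^{-M}$ far below $1/2$. Nothing forces $A_1$ and $A_2-d$ to intersect at all, let alone in a set of size $N^{1-O(\delta)}$; the set $A'$ on which your whole contradiction rests may be empty. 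Closing this requires either a version of BSzG that controls where the dense piece sits, or a restructuring of the argument (e.g.\ working with the energy between $A_1$ and $A_1+d$ directly rather than intersecting two independently produced subsets). This is a known pitfall in sum--product arguments of this shape and cannot be waved away as bookkeeping.

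The second gap is the decisive one, and you concede it yourself: the quantitative core --- the incidence or collinear-triple estimate that bounds $|\xi H_1\cap(\eta H_2-d)|$, together with the optimization in $\delta$ that is supposed to output the exponent $1/26$ --- is never stated, let alone proved. Since the entire content of the theorem is that specific exponent, asserting that "optimizing the resulting inequality in $\delta$ yields the threshold $1/26$" without exhibiting the inequality leaves the theorem unproved. Everything before that point (Cauchy--Schwarz giving $\E^\times(A,B)\ge |A|^2|B|^2/|AB|$, the role of $N\le\sqrt p$ in keeping incidence bounds nondegenerate) is correct but soft; the hard analytic input is exactly the part that is missing.
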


All logarithms are base $2.$ Signs $\ll$ and $\gg$ are the usual Vinogradov's symbols.
If we have a set $A$ then we will write $a \lesssim b$ or $b \gtrsim a$ if $a = O(b \cdot \log^c |A|)$, $c>0$.

\section{On operators over multiplicative subgroups}
\label{sec:operators}

Let $\Gr$ be an abelian group.
Let also $g : \Gr \to \C$ be a function, and $A\subseteq \Gr$ be a finite set.
By $\oT^{g}_A$ denote the matrix with indices in the set $A$
\begin{equation}\label{def:operator1}
    \oT^{g}_A (x,y) = g(x-y) A(x) A(y) \,.
\end{equation}
It is easy to see that $\oT^{g}_A$ is hermitian iff $\ov{g(-x)} = g(x)$.
The corresponding action of $\oT_A^g$ is
$$
    \langle \oT^{g}_A a, b \rangle = \sum_z g(z) (\ov{b} \c a) (z) \,.
$$
for any functions $a,b : A \to \C$.
In the case $\ov{g(-x)} = g(x)$ by $\Spec (\oT^{g}_A)$ we denote the spectrum of the operator $\oT^{g}_A$
$$
    \Spec (\oT^{g}_A) = \{ \mu_1 \ge \mu_2 \ge \dots \ge \mu_{|A|} \} \,.
$$
Write  $\{ f \}_{\a}$, $\a\in [|A|]$ for
the corresponding eigenfunctions.
Let us note two simple formulas
\begin{equation}\label{f:trace1}
    \sum_{\a=1}^{|A|} \mu_\a = g(0) |A| \,,
\end{equation}
and
\begin{equation}\label{f:trace2}
    \sum_{\a=1}^{|A|} |\mu_\a|^2 = \sum_{x} |g(x)|^2 (A\c A) (x) \,.
\end{equation}
General theory of such operators was developed in \cite{SS1,s_mixed}.

\bigskip

Now we consider the operators of a special form.
Let $p$ be a prime number, $q=p^s$ for some integer $s \ge 1$.
Let
$\F_q$ be the field with  $q$ elements, and let $\Gamma\subseteq \F_q$ be a multiplicative subgroup.
We will write $\F^*_q$ for $\F_q\setminus \{ 0 \}$.
Denote by $t$ the cardinality of $\Gamma$, and put $n=(q-1)/t$. Let
also $\mathbf{g}$ be a primitive root, then $\Gamma = \{ \mathbf{g}^{nl} \}_{l=0,1,\dots,t-1}$.
Let $\{ \chi_\a (x)\}_{\a \in [t]}$ be the
orthogonal
family of multiplicative characters on $\Gamma$ and
$\{ f_\a (x) \}_{\a \in [t]}$ be the correspondent orthonormal family, that is
\begin{equation}\label{f:chi_Gamma}
    f_\a (x) = |\G|^{-1/2} \chi_\a (x) = |\G|^{-1/2} \cdot e\left( \frac{\a l}{t} \right) \,, \quad x=\mathbf{g}^{nl} \,, \quad 0\le l < t \,.
\end{equation}
In particular, $f_\a (x) = \chi_\a (x) = 0$ if $x\notin \G$.
Clearly, products of such functions form a basis on Cartesian products of $\G$.

If $\_phi:\G \to \C$ be a function then denote by $c_\a (\_phi)$ the correspondent coefficients of $\_phi$ relatively to the family $\{ f_\a (x) \}_{\a \in [t]}$. In other words,
$$
    c_\a (\_phi) := \langle \_phi, f_\a \rangle = \sum_{x \in \G} \_phi (x) \ov{f_\a (x)} \,, \quad \quad \a \in [|\G|] \,.
$$

The method of the paper based on
%We need in
the lemma, which was proved mainly in \cite{SS1}.
We give the proof for the sake of completeness.
Further results on the spectrum of operators connected with multiplicative subgroups can be found in \cite{Sh_energy}.

\begin{lemma}
    Let $\Gamma\subseteq \F^*_q$ be a multiplicative subgroup.
    Suppose that $H(x,y) : \G \times \G \to \C$ satisfies two conditions
\begin{equation}\label{f:subgroup_eigenvalues}
    H(y,x) = \ov{H(x,y)} \quad \mbox{ and } \quad H(\gamma x, \gamma y) = H(x,y)\,, \quad \forall \gamma \in \G \,.
\end{equation}
    Then the functions $\{ f_\a (x) \}_{\a \in [|\G|]}$ form the complete orthonormal family of the eigenfunctions of the operator $H(x,y)$.
\label{l:subgroup_eigenvalues}
\end{lemma}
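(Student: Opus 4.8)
The plan is to verify directly that each $f_\a$ is an eigenfunction of the operator $H$, where $H$ acts by $(Hf_\a)(x) = \sum_{y \in \G} H(x,y) f_\a(y)$; orthonormality and completeness are then immediate. Indeed, the family $\{f_\a\}_{\a \in [|\G|]}$ is orthonormal by the standard orthogonality relations for the characters $\chi_\a$, since $\langle f_\a, f_\beta\rangle = |\G|^{-1}\sum_{x\in\G}\chi_\a(x)\ov{\chi_\beta(x)} = \delta_{\a\beta}$; as there are exactly $|\G|$ such functions and the space of functions on $\G$ has dimension $|\G|$, completeness follows once they are shown to be eigenfunctions.

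The heart of the matter is a single change of variables. Fix $x \in \G$ (so that $x$ is invertible, as $\G \subseteq \F^*_q$) and write $y = xu$ with $u$ running over $\G$; this is a bijection of $\G$ onto itself. Condition~(\ref{f:subgroup_eigenvalues}) applied with $\gamma = x^{-1}$ collapses the kernel to $H(x, xu) = H(1, u)$, while multiplicativity of the character gives $f_\a(xu) = |\G|^{-1/2}\chi_\a(x)\chi_\a(u) = f_\a(x)\chi_\a(u)$. Substituting,
\begin{equation*}
    (Hf_\a)(x) = \sum_{u\in\G} H(1,u) f_\a(x) \chi_\a(u) = \mu_\a f_\a(x)\,, \qquad \mu_\a := \sum_{u\in\G} H(1,u)\chi_\a(u)\,,
\end{equation*}
so that $Hf_\a = \mu_\a f_\a$ with an eigenvalue independent of $x$.

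Finally, the hermitian symmetry $H(y,x) = \ov{H(x,y)}$ combined once more with (\ref{f:subgroup_eigenvalues}) gives $\ov{H(1,u)} = H(u,1) = H(1,u^{-1})$; pairing this with $\ov{\chi_\a(u)} = \chi_\a(u^{-1})$ and replacing $u$ by $u^{-1}$ in the defining sum yields $\ov{\mu_\a} = \mu_\a$, so the eigenvalues are real, as they must be. I do not anticipate a genuine obstacle: the only point requiring care is that the substitution $y = xu$ is legitimate precisely because every element of $\G$ is invertible, which is exactly where the hypothesis $\G \subseteq \F^*_q$ is used. Conceptually the lemma is the statement that an operator commuting with the diagonal translation action of the abelian group $\G$ is simultaneously diagonalized by the character basis, and the computation above is merely the explicit form of that principle.
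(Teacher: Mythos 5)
Your proof is correct and follows essentially the same route as the paper: both arguments exploit the $\G$-invariance of the kernel together with multiplicativity of the characters to reduce the eigenvalue equation at an arbitrary $x\in\G$ to the equation at a single point, the only difference being that you make the eigenvalue $\mu_\a=\sum_{u\in\G}H(1,u)\chi_\a(u)$ explicit where the paper merely says to "choose $\mu$ appropriately."
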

\begin{proof}
    The first property of (\ref{f:subgroup_eigenvalues}) says that $H$ is a hermitian operator, so it has a complete orthonormal family of its eigenfunctions.
    Consider the equation
\begin{equation}\label{tmp:25.04.2015_2}
    \mu f(x) = \G(x) \sum_{y\in \G} H(x,y) f(y) \,,
\end{equation}
    where $\mu$ is some number and $f : \G \to \C$ is unknown function.
    It is sufficient to check that any $f=\chi_\a$, $\a \in [|\G|]$ satisfies the equation above.
    Indeed, making a substitution $x\to x\gamma$ into (\ref{tmp:25.04.2015_2}) and using the characters property, we obtain
$$
    \mu f(x) f(\gamma) = \G(x\gamma) \sum_{y} H(\gamma x,y) f(y) = \G(x) \sum_{y} H(\gamma x,\gamma y) f(\gamma y)
        =
            \G(x) f(\gamma) \sum_{y} H(x,y) f(y) \,,
$$
    where the second property of (\ref{f:subgroup_eigenvalues}) has been used.
    Thus, it remains to check (\ref{tmp:25.04.2015_2}) just for one $x\in \G$.
    Choosing the number $\mu$ in an appropriate way we attain  the former.
    This completes the proof.
$\hfill\Box$
\end{proof}

\begin{corollary}
    Let $\Gamma\subseteq \F^*_q$ be a multiplicative subgroup
    and $g$ be any $\G$--invariant real function.
    Then the operator $\oT^g_\G$ is normal.
\end{corollary}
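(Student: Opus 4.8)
The plan is to exhibit an explicit orthonormal basis of eigenfunctions for $\oT^g_\G$ and then invoke the spectral characterization of normal matrices. Write $H(x,y) = g(x-y)$, so that $\oT^g_\G (x,y) = H(x,y) \G(x) \G(y)$ in the notation of (\ref{def:operator1}). The first thing I would record is that the $\G$-invariance of $g$ (that is, $g(\gamma z) = g(z)$ for every $\gamma \in \G$) yields the second relation of (\ref{f:subgroup_eigenvalues}): indeed $H(\gamma x, \gamma y) = g(\gamma (x-y)) = g(x-y) = H(x,y)$ for all $\gamma \in \G$. I would \emph{not} cite Lemma \ref{l:subgroup_eigenvalues} verbatim, because its first (hermitian) hypothesis $\ov{H(-x)} = H(x)$ amounts to evenness of $g$, which need not hold. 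Instead I would reuse only the eigenfunction computation inside that lemma's proof, which rests solely on the invariance just verified together with the multiplicativity of the characters.

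Next I would check directly that each $f_\a$ is an eigenfunction. For $x \in \G$ one has
\[
    (\oT^g_\G f_\a)(x) = \G(x) \sum_{y \in \G} g(x-y) f_\a (y) \,,
\]
and, since $x \neq 0$, the substitution $y = xu$ with $u \in \G$ is a bijection of $\G$. Using $f_\a(xu) = \chi_\a(x) f_\a(u)$ and the invariance $g(x(1-u)) = g(1-u)$ this becomes
\[
    (\oT^g_\G f_\a)(x) = f_\a(x) \sum_{u \in \G} g(1-u) \chi_\a(u) =: \lambda_\a \, f_\a(x) \,,
\]
with $\lambda_\a$ independent of $x$, while both sides vanish for $x \notin \G$. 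Hence every $f_\a$, $\a \in [|\G|]$, is an eigenfunction of $\oT^g_\G$ with eigenvalue $\lambda_\a = \sum_{u \in \G} g(1-u)\chi_\a(u)$.

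Finally, since the family $\{ f_\a \}_{\a \in [|\G|]}$ is orthonormal and its cardinality equals the dimension $|\G|$ of the space on which $\oT^g_\G$ acts, it is a \emph{complete} orthonormal system of eigenfunctions; equivalently $\oT^g_\G$ is unitarily diagonalizable, and a unitarily diagonalizable matrix is precisely a normal one. This gives the claim. The step that requires care — and the main conceptual obstacle — is that normality here must be extracted from the existence of an orthonormal eigenbasis rather than from self-adjointness: the eigenvalues $\lambda_\a$ are generically complex even when $g$ is real, so $\oT^g_\G$ is typically not hermitian, and one should not attempt to force the first hypothesis of Lemma \ref{l:subgroup_eigenvalues}. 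In particular the realness of $g$ is not essential to the argument and serves only to match the statement.
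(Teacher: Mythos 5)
Your proof is correct, and the eigenfunction computation at its core is exactly the one the paper uses: the Corollary is stated immediately after Lemma \ref{l:subgroup_eigenvalues} and is meant to follow from it, and the substitution $y=xu$ together with multiplicativity of $\chi_\a$ is precisely the calculation in that lemma's proof. Where you genuinely depart from the paper is in the logical packaging, and your version is the more careful one. Read literally, the paper's derivation would apply Lemma \ref{l:subgroup_eigenvalues} to $H(x,y)=g(x-y)$, but the first hypothesis in (\ref{f:subgroup_eigenvalues}) forces $g(-z)=\ov{g(z)}$, i.e.\ evenness for real $g$, which $\G$--invariance only guarantees when $-1\in\G$; in the paper's applications $g$ is always taken even, so nothing breaks there, but the Corollary as stated does not assume this. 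You correctly observe that the hermitian hypothesis is used in the lemma only to assert the \emph{existence} of an orthonormal eigenbasis, which is superfluous here because the $f_\a$ already form one, and you then conclude normality from unitary diagonalizability ($M=UDU^*$ gives $MM^*=UDD^*U^*=UD^*DU^*=M^*M$) rather than from self-adjointness. The one small thing worth adding for completeness is that the matrix $\oT^g_\G$ of (\ref{def:operator1}) is indexed by $A=\G$, so the relevant space is $\C^\G$ of dimension $|\G|$ and the count of eigenfunctions matches; you implicitly use this when asserting completeness. With that understood, your argument is a correct proof and in fact repairs a minor imprecision in the paper's implicit one.
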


%\section{The proof}
\section{First results}
\label{sec:fr}

In the section we have deal with the quantity $\T (A,B,C,D)$, see \cite{R_Minkovski}, \cite{Sh_3G}
($\T$ for collinear {\it triples})
\begin{equation}\label{f:T_energy}
    \T (A,B,C,D) := \sum_{c \in C,\, d\in D} \E^\times (A-c,B-d) \,.
\end{equation}
Clearly, $\T (A,B,C,D)$ enjoy the following invariance property
\begin{equation}\label{f:T_invariance}
    \T (A-x,B-y,C-x,D-y) = \T (A,B,C,D) \,,\quad \quad \forall x,y
    %\,.
\end{equation}
as well as
\begin{equation}\label{f:T_invariance_m}
    \T (\la A, \mu B, \la C, \mu D) = \T (A,B,C,D) \,,\quad \quad \forall \la, \mu \neq 0
    \,.
\end{equation}
If $A=B$, $C=D$ then denote by $\T(A,C)$ the quantity $\T (A,A,C,C)$.
If $A=B=C=D$ then we write $\T(A)$ for $\T (A,A,A,A)$.
Let us make a few remarks about the quantity $\T (A,B,C,D)$.
%Using the Cauchy--Schwarz inequality one can obtain
%\begin{equation}\label{f:T_CS}
%    \T^2 (A,B,C,D) \le \T(A,C) \T(B,D) \,.
%\end{equation}
It is easy to check that
$$
    \T (A,B,C,D) = \sum_{x,x' \neq 0} \sum_{\la} \Cf_3 (C,A,A) (x,\la x) \cdot \Cf_3 (D,B,B) (x',\la x')
        +
$$
\begin{equation}\label{f:T_error}
        +
            \theta (|A\cap C| |B|^2 |D| + |B\cap D| |A|^2 |C| + 2 |A\cap C| |B\cap D| |A| |B|) \,,
\end{equation}
where $|\theta| \le 1$.
Three error terms in (\ref{f:T_error}) are usually negligible.
Denote by $\T^* (A,B,C,D)$ the rest.
Thus, in the symmetric case $A=B$, $C=D$ one has
\begin{equation}\label{f:T_square}
    \T^* (A,C) = \sum_\la \left( \sum_{x\neq 0} \Cf_3 (C,A,A) (x,\la x) \right)^2 \,.
\end{equation}
Finally, because $\E^\times (A-c,B-b) \ge |A| |B|$ it follows that $\T (A,B,C,D) \ge |A||B||C||D|$.
It turns out that there is the same upper bound for $\T (A)$ (up to logarithmic factors)
in the case of cosets of a multiplicative subgroup $A$.
The proof based on the following lemma of Mit'kin~\cite{Mitkin}, see also \cite{K_Tula}, \cite{SSV} and \cite{V-S}.
%%A similar result was obtained by Mit'kin~\cite{Mitkin}.

\begin{lemma}
\label{l:mitkin}
Let $p>2$ be a prime number, $\Gamma,\Pi$ be subgroups of $\mathbb{F}_p^*,$
$M_\Gamma,M_\Pi$ be sets of distinct coset representatives of $\Gamma$ and $\Pi$, respectively.
For an arbitrary set $\Theta \subset M_\Gamma \times M_\Pi$ such that $(|\Gamma||\Pi|)^2|\Theta| < p^3$
and $|\Theta| \le 33^{-3}|\Gamma||\Pi|$, we have
\begin{equation}\label{eq:mitkin}
\sum_{(u,v) \in \Theta}\Bigl|\{(x,y) \in \Gamma \times \Pi : ux+vy=1\}\Bigr| \ll (|\Gamma||\Pi||\Theta|^2)^{1/3}.
\end{equation}
\end{lemma}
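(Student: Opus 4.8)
The plan is to read the sum as a point--line incidence count and reduce it, by a single application of H\"older's inequality, to a third--moment (concurrence) estimate. Write $N(u,v) := |\{(x,y)\in\Gamma\times\Pi : ux+vy=1\}|$ and $T:=\sum_{(u,v)\in\Theta}N(u,v)$. Geometrically, $T$ counts incidences between the point set $\Theta$ and the family of $|\Gamma||\Pi|$ lines $\ell_{x,y}=\{(u,v): ux+vy=1\}$, $(x,y)\in\Gamma\times\Pi$, and $N(u,v)$ is the number of these lines passing through the point $(u,v)$. Applying H\"older with exponents $3/2$ and $3$ gives
$$
T=\sum_{(u,v)\in\Theta}N(u,v)\le |\Theta|^{2/3}\Big(\sum_{(u,v)\in\Theta}N(u,v)^3\Big)^{1/3}.
$$
Hence it suffices to prove the third--moment bound $\sum_{(u,v)\in\Theta}N(u,v)^3\ll |\Gamma||\Pi|$, since then $T\ll(|\Gamma||\Pi||\Theta|^2)^{1/3}$ follows immediately. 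This target is also essentially forced: by convexity $\sum_\Theta N^3\ge T^3/|\Theta|^2$, so for the sharp form of the lemma one must have $\sum_\Theta N^3\asymp|\Gamma||\Pi|$.

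To bound the third moment I would expand $N$ in additive characters $e_p$,
$$
N(u,v)=\frac{|\Gamma||\Pi|}{p}+\frac1p\sum_{t\neq 0}e_p(-t)\Big(\sum_{x\in\Gamma}e_p(tux)\Big)\Big(\sum_{y\in\Pi}e_p(tvy)\Big),
$$
splitting $N$ as a main term plus an error. The main term contributes $|\Theta|\,(|\Gamma||\Pi|/p)^3=|\Theta|(|\Gamma||\Pi|)^3/p^{3}$ to $\sum_\Theta N^3$, and this is $\ll|\Gamma||\Pi|$ precisely because of the hypothesis $(|\Gamma||\Pi|)^2|\Theta|<p^3$. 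The remaining cross and error terms are products of incomplete Gauss sums over the subgroups $\Gamma,\Pi$; here I would use that $M_\Gamma,M_\Pi$ are full transversals to complete the sums over cosets and apply Weil's bound, the subgroup structure being exactly what keeps these character sums small. The second hypothesis $|\Theta|\le 33^{-3}|\Gamma||\Pi|$ guarantees that the diagonal and lower--order contributions to $\sum_\Theta N^3$ (coming from coincidences among the three lines, which reduce to $T$ and to pair counts) stay below the main term, and it lets one absorb the numerical constants.

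The main obstacle is exactly this control of the off--diagonal character sums: after expansion one is left with a triple product of incomplete subgroup sums summed against $\Theta$ and three auxiliary frequencies, and estimating it requires isolating the degenerate frequency configurations where the Weil bound degenerates, as well as handling the restriction to the subset $\Theta$ rather than to all of $M_\Gamma\times M_\Pi$. Should the character--sum bookkeeping prove unwieldy, an alternative route is to bound $\sum_\Theta N^3$ by the number of collinear triples in the multiplicative grid $\Gamma\times\Pi$ whose spanning line is dual to a point of $\Theta$ and to invoke a sum--product / Stepanov--type estimate for such triples. In either case the reduction via H\"older is routine, while the technical heart, and the joint use of both size hypotheses, resides entirely in the third--moment step.
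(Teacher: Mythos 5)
The paper does not actually prove this statement: it is quoted as a known result of Mit'kin (with the variants in Konyagin, Vyugin--Shkredov and Shkredov--Solodkova--Vyugin), where it is established by Stepanov's polynomial method --- one constructs an auxiliary polynomial of controlled degree vanishing to high order at the solutions $(x,y)$, using $x^{|\Gamma|}=1$ on $\Gamma$ to cut degrees; the hypotheses $(|\Gamma||\Pi|)^2|\Theta|<p^3$ and $|\Theta|\le 33^{-3}|\Gamma||\Pi|$ (note the telltale constant) are exactly what make that construction nondegenerate. So your proposal must be judged as a free-standing proof, and as such it has a genuine gap.

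Your H\"older step is correct but carries no content: as you yourself observe, $\sum_{\Theta}N^3\ge T^3/|\Theta|^2$, and conversely the lemma (applied to the superlevel sets of $N$ and summed dyadically) gives $\sum_{\Theta}N^3\lesssim|\Gamma||\Pi|$, so the third-moment target is equivalent, up to logarithms, to the statement being proved. The real problem is that the method you propose for it --- completing the sums and applying Weil --- provably cannot work in the regime where the lemma is used. Take $\Pi=\Gamma$ and $|\Theta|=1$ (this is precisely how the lemma enters Theorem \ref{t:A-A_in_G}, to get $|\Gamma\cap(\Gamma+1)|\ll|\Gamma|^{2/3}$ for $|\Gamma|<p^{3/4}$). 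The Weil bound $\bigl|\sum_{x\in\Gamma}e_p(ax)\bigr|\le\sqrt{p}$ is worse than the trivial bound $|\Gamma|$ whenever $|\Gamma|<\sqrt{p}$, and even for $\sqrt{p}<|\Gamma|<p^{3/4}$ the completion argument yields only
$$
N(u,v)\;\le\;\frac{|\Gamma|^2}{p}+O\!\left(\sqrt{p\,|\Gamma|}\right),
$$
whose error term $\sqrt{p|\Gamma|}$ dwarfs the target $|\Gamma|^{2/3}$ for every admissible $|\Gamma|$. The hypothesis $(|\Gamma||\Pi|)^2|\Theta|<p^3$ controls only the main term $|\Theta|(|\Gamma||\Pi|/p)^3$, not these oscillatory terms, and the hypothesis $|\Theta|\le 33^{-3}|\Gamma||\Pi|$ plays no role in a character-sum argument at all. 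Your fallback --- ``invoke a sum--product / Stepanov-type estimate for collinear triples'' --- is a restatement of the lemma rather than a proof of it. In short, the missing idea is the Stepanov auxiliary-polynomial construction; no amount of bookkeeping with completed character sums will substitute for it here.
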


Using the above lemma, the required upper bound for $\T$ was obtained in \cite{Sh_3G}.
It is easy to establish a similar result for larger subgroups $A$, see Proposition \ref{p:sigma_l} of section \ref{sec:T_A_f_p}.

\begin{proposition}
Let $p$ be a prime number, $\Gamma,\Pi$ be subgroups of $\mathbb{F}_p^*$.
Suppose that $|\Gamma||\Pi| < p$.
Then for any $\xi \in \f_p^*$, $\eta \in \f_p^*$ one has
\begin{equation}\label{f:sigma}
%    \sum_{\gamma \in \G,\, \pi\in \Pi} \E^\times (\G-\gamma,\Pi-\pi)
    \T (\G, \Pi, \xi \G, \eta \Pi)
        \ll
            |\G|^2 |\Pi|^2 \log (\min\{ |\G|, |\Pi| \}) + |\G| |\Pi| (|\G|^2 + |\Pi|^2) \,.
\end{equation}
\label{p:sigma}
\end{proposition}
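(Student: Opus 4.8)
The plan is to read off the main term and the error from the exact identity (\ref{f:T_error}), bound the error by hand, and reduce the main term to an energy sum that Mit'kin's Lemma \ref{l:mitkin} controls. Specializing (\ref{f:T_error}) to $A=\G$, $B=\Pi$, $C=\xi\G$, $D=\eta\Pi$, the three error terms involve only $|\G\cap\xi\G|\le|\G|$ and $|\Pi\cap\eta\Pi|\le|\Pi|$, so they are bounded by $|\G||\Pi|^3+|\G|^3|\Pi|+|\G|^2|\Pi|^2\ll|\G||\Pi|(|\G|^2+|\Pi|^2)$, which is the second summand of (\ref{f:sigma}) (here $|\G|^2|\Pi|^2\ll|\G||\Pi|(|\G|^2+|\Pi|^2)$ by AM--GM). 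It remains to estimate $\T^*:=\T^*(\G,\Pi,\xi\G,\eta\Pi)$.

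Next I would turn $\T^*$ into an energy. Writing $\T^*=\sum_\la S_\G(\la)S_\Pi(\la)$ with $S_\G(\la)=\sum_{x\neq0}\Cf_3(\xi\G,\G,\G)(x,\la x)$ and $S_\Pi$ defined analogously, I expand $\Cf_3(\xi\G,\G,\G)(x,\la x)=\sum_z\xi\G(z)\G(z+x)\G(z+\la x)$, substitute $z=\xi\alpha$, $z+x=g_1$, $z+\la x=g_2$ with $\alpha,g_1,g_2\in\G$, and divide by $\alpha$. This gives $S_\G(\la)=|\G|\,N_\G(\la)$ with $N_\G(\la)=|\{(s,t)\in\G\times\G: t-\xi=\la(s-\xi)\}|$, and similarly $S_\Pi(\la)=|\Pi|\,N_\Pi(\la)$ with $N_\Pi(\la)=|\{(s',t')\in\Pi\times\Pi: t'-\eta=\la(s'-\eta)\}|$. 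Hence $\T^*=|\G||\Pi|\,Q$ with
\[ Q=\sum_\la N_\G(\la)N_\Pi(\la)=|\{(s,t,s',t')\in\G^2\times\Pi^2:(t-\xi)(s'-\eta)=(t'-\eta)(s-\xi)\}| \]
up to boundary terms (from $s=\xi$, $s'=\eta$) of size $\ll|\G||\Pi|$. Now fix $(t,t')\in\G\times\Pi$ and count $(s,s')\in\G\times\Pi$: the defining relation becomes the single affine equation $(\eta-t')s-(\xi-t)s'=\eta t-\xi t'$, whose normalized coefficient vector $(u,v)$ satisfies $\xi u+\eta v=1$, i.e. lies on a fixed line $L$, and whose fibre over a point of $L$ is exactly $\{(t,t')\in\G\times\Pi: ut+vt'=1\}$. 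Consequently
\[ Q=\sum_{(u,v)\in L} r(u,v)^2+O(|\G||\Pi|),\qquad r(u,v):=|\{(s,s')\in\G\times\Pi: us+vs'=1\}|. \]

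The estimate for this energy is where Mit'kin's lemma does the work. Since $r(u,v)\le\min\{|\G|,|\Pi|\}$, I decompose $L$ into the dyadic level sets $\Xi_\Delta=\{(u,v)\in L:\Delta\le r(u,v)<2\Delta\}$ with $\Delta=2^j$ and $0\le j\ll\log(\min\{|\G|,|\Pi|\})$. Applying Lemma \ref{l:mitkin} with the subgroups $\G,\Pi$ and $\Theta=\Xi_\Delta$ gives $\Delta|\Xi_\Delta|\le\sum_{(u,v)\in\Xi_\Delta}r(u,v)\ll(|\G||\Pi||\Xi_\Delta|^2)^{1/3}$, hence $|\Xi_\Delta|\ll|\G||\Pi|\Delta^{-3}$ and $\sum_{(u,v)\in\Xi_\Delta}r(u,v)^2\ll\Delta^2|\Xi_\Delta|\ll|\G||\Pi|$. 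Summing over the $O(\log(\min\{|\G|,|\Pi|\}))$ scales yields $Q\ll|\G||\Pi|\log(\min\{|\G|,|\Pi|\})$, and multiplying by $|\G||\Pi|$ gives the first summand of (\ref{f:sigma}).

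The main thing to get right is the admissibility of Mit'kin's lemma at every scale. The hypothesis $(|\G||\Pi|)^2|\Xi_\Delta|<p^3$ holds because $|\Xi_\Delta|\le|\G||\Pi|$ and $|\G||\Pi|<p$ — this is precisely where the standing assumption enters, and it is exactly strong enough. The other hypothesis $|\Xi_\Delta|\le 33^{-3}|\G||\Pi|$ can fail only for the $O(1)$ scales with $\Delta<33^3$ (since $\Delta|\Xi_\Delta|\le\sum_{(u,v)\in L}r(u,v)\ll|\G||\Pi|$ forces $|\Xi_\Delta|\le|\G||\Pi|/\Delta$); for those one either splits $\Xi_\Delta$ into $O(1)$ admissible blocks before applying the lemma, or bounds the level crudely by $\Delta\sum_{\Xi_\Delta}r\ll|\G||\Pi|$. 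The remaining care is purely bookkeeping of the degenerate cases in the reduction to $\sum_{(u,v)\in L}r(u,v)^2$ — the pairs $(s,s')$ proportional to $(\xi,\eta)$, the vanishing of $\eta t-\xi t'$, and $s=\xi$ or $s'=\eta$ — each of which contributes $O(|\G|^2|\Pi|^2)$ to $\T$ and is therefore absorbed into the bound of (\ref{f:sigma}). This is the two-subgroup analogue of the single-subgroup estimate from \cite{Sh_3G}.
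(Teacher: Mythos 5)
The paper does not actually prove Proposition \ref{p:sigma}; it only remarks that ``the required upper bound for $\T$ was obtained in \cite{Sh_3G}'' using Lemma \ref{l:mitkin}. Your route --- isolate the main term via (\ref{f:T_error}), rescale $\Cf_3$ to rewrite $\T^*$ as $|\G||\Pi|\sum_{(u,v)}r(u,v)^2$ over the line $L=\{\xi u+\eta v=1\}$, and run a dyadic decomposition against Mit'kin's lemma --- is exactly the intended one, and your treatment of the error terms, of the admissibility condition $(|\G||\Pi|)^2|\Theta|<p^3$ via $|\G||\Pi|<p$, and of the small scales $\Delta<33^3$ is correct.

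There is, however, one genuine gap in the key step. Lemma \ref{l:mitkin} requires $\Theta\subset M_\G\times M_\Pi$, i.e.\ the first coordinates of the points of $\Theta$ must lie in pairwise distinct cosets of $\G$ and the second coordinates in pairwise distinct cosets of $\Pi$; your level sets $\Xi_\Delta\subset L$ need not satisfy this. Indeed, $r(u,v)$ depends only on the coset pair $(u\G,v\Pi)$, and the number of points of $L$ lying in a fixed coset pair with representative $(u_0,v_0)$ equals $|\{(\gamma,\pi)\in\G\times\Pi:\xi u_0\gamma+\eta v_0\pi=1\}|=r(\xi u_0,\eta v_0)$, which is itself an $r$--value and is in general unbounded. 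Since $r$ is constant on coset pairs, all of these repeated points land in the \emph{same} $\Xi_\Delta$, so the inequality $\Delta|\Xi_\Delta|\le\sum_{\Xi_\Delta}r\ll(|\G||\Pi||\Xi_\Delta|^2)^{1/3}$ is not a legitimate application of the lemma. The repair costs nothing in the end but needs an extra idea: let $\widetilde{\Xi}_\Delta$ be a set of distinct coset--pair representatives $(u_0,v_0)$ for $\Xi_\Delta$; Mit'kin applied to $\widetilde{\Xi}_\Delta$ gives $|\widetilde{\Xi}_\Delta|\ll|\G||\Pi|\Delta^{-3}$ as you argue, and then
$$
\sum_{(u,v)\in\Xi_\Delta}r(u,v)^2\;\le\;4\Delta^2\sum_{(u_0,v_0)\in\widetilde{\Xi}_\Delta}r(\xi u_0,\eta v_0)\;\ll\;\Delta^2\bigl(|\G||\Pi|\,|\widetilde{\Xi}_\Delta|^2\bigr)^{1/3}\;\ll\;|\G||\Pi|\,,
$$
by a \emph{second} application of Lemma \ref{l:mitkin}, now to the (again pairwise distinct) coset pairs $(\xi u_0\G,\eta v_0\Pi)$. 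Equivalently, since $(U,V)\mapsto(\xi U,\eta V)$ permutes coset pairs, one has $\sum_{(u,v)\in L}r(u,v)^2=\sum_{(U,V)}r(\xi u_0,\eta v_0)\,r(u_0,v_0)^2\le\sum_{(U,V)}r(u_0,v_0)^3$ by AM--GM, and the third moment over distinct coset pairs is estimated dyadically by Mit'kin exactly as you intended. Either repair restores $Q\ll|\G||\Pi|\log(\min\{|\G|,|\Pi|\})$ and hence (\ref{f:sigma}).
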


\bigskip

It is known that any sets $A$, $B$ with  $A+B\subseteq R$ ($R$ is the set of all quadratic residues) satisfy $|A||B| < p$, see e.g. \cite{BMR}.
Before proving the first  main result of the section let us note a very simple estimate for the sizes of $A,B$ with $A+B\subseteq \G \bigsqcup \{ 0 \}$, where $\G$ is a multiplicative subgroup.

\begin{lemma}
    Let $\G \subset \mathbb{F}_p^{*}$ be a multiplicative subgroup,
    and $A,B\subseteq \f_p$ be two sets.
    Suppose that $A+B\subseteq \G \bigsqcup \{ 0 \}$.
    Then $|A| |B| < 4 p$.
\label{l:ab_p}
\end{lemma}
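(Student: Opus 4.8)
The plan is to detect membership in $\G$ by multiplicative characters and then to exploit the inclusion hypothesis to cancel the main term. Write $t = |\G|$ and $n = (p-1)/t = [\f_p^* : \G]$, and let $\G^\perp$ denote the group of the $n$ multiplicative characters of $\f_p^*$ that are trivial on $\G$, with the usual convention $\chi(0)=0$. Then for $x\in\f_p^*$ one has $\G(x) = \frac1n\sum_{\chi\in\G^\perp}\chi(x)$, where $\G(\cdot)$ is the indicator of $\G$. Set $N = \#\{(a,b)\in A\times B : a+b\in\G\}$ and $Z = \#\{(a,b)\in A\times B : a+b=0\}$. The crucial use of the hypothesis $A+B\subseteq\G\bigsqcup\{0\}$ is that it forces every pair with $a+b\neq 0$ to land in $\G$, so that exactly $N = |A||B| - Z$.

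Next I would compute $N$ through the characters. For $\chi\in\G^\perp$ put $S_\chi = \sum_{a\in A,\, b\in B}\chi(a+b)$; since $\chi(0)=0$ the restriction $a+b\neq 0$ is automatic. Then $N = \frac1n\sum_{\chi\in\G^\perp}S_\chi$, and the principal character contributes $S_{\chi_0} = |A||B| - Z = N$. Separating this term gives $nN = N + \sum_{\chi\neq\chi_0}S_\chi$, that is,
\[ (n-1)\,N = \sum_{\chi\in\G^\perp,\,\chi\neq\chi_0} S_\chi \,. \]
This is the point where properness of $\G$ enters: if $\G=\f_p^*$ then $n=1$ and the identity is vacuous (and indeed the claimed bound fails in that case, e.g.\ $A=B=\f_p$), so one genuinely needs $n\ge 2$.

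It remains to bound the bilinear sums $S_\chi$ for nonprincipal $\chi$. By Cauchy--Schwarz after completing the outer variable,
\[ |S_\chi|^2 \le |A|\sum_{x\in\f_p}\Big|\sum_{b\in B}\chi(x+b)\Big|^2 = |A|\sum_{b,b'\in B}\sum_{x\in\f_p}\chi(x+b)\,\ov{\chi(x+b')} \,. \]
The diagonal $b=b'$ contributes $|B|(p-1)$, while for $b\neq b'$ the inner sum is the complete sum of $\chi$ along the M\"obius image $(x+b)/(x+b')$, which equals $-1$; hence the whole expression is $|B|(p-|B|)\le |B|p$ and $|S_\chi|\le\sqrt{|A||B|p}$. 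Substituting into the displayed identity gives $(n-1)N \le (n-1)\sqrt{|A||B|p}$, and since $n\ge 2$ we conclude $N\le\sqrt{|A||B|p}$.

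Finally I would assemble the pieces. Since $Z\le\min(|A|,|B|)\le\sqrt{|A||B|}$, we obtain $|A||B| = N + Z \le \sqrt{|A||B|p} + \sqrt{|A||B|}$; writing $u=\sqrt{|A||B|}$ this reads $u\le\sqrt p + 1$, whence $|A||B|\le(\sqrt p+1)^2 < 4p$. The only substantial step is the character-sum estimate, and even there the required input is the elementary evaluation $\sum_x\chi\big((x+b)/(x+b')\big)=-1$ rather than the full Weil bound; the one bookkeeping subtlety is the convention $\chi(0)=0$, which is precisely what converts the forbidden value $a+b=0$ into the harmless correction term $Z$.
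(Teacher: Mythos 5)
Your proof is correct, and it takes a genuinely different route from the paper's. The paper argues by contradiction via additive Fourier analysis: assuming $|A||B|\ge 4p$, it first uses Cauchy--Davenport to pin down the range of $|\G|$ and $p$, and then expands the count of pairs with $a+b\in\G\sqcup\{0\}$ using additive characters, invoking the bound $\max_{\xi\neq 0}\bigl|\sum_{x\in\G}e^{2\pi i x\xi/p}\bigr|\le\sqrt{p-|\G|}$ for Fourier coefficients of subgroups. You instead detect $\G$ by the multiplicative characters trivial on it, isolate the principal-character term (which the hypothesis forces to equal the full count $N=|A||B|-Z$), and bound each nonprincipal bilinear sum by $\sqrt{|A||B|p}$ via Cauchy--Schwarz and the elementary evaluation $\sum_x\chi\bigl((x+b)/(x+b')\bigr)=-1$. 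Your computation checks out: the diagonal gives $|B|(p-1)$, the off-diagonal gives $-(|B|^2-|B|)$, so the inner expression is exactly $|B|(p-|B|)$; the cancellation $(n-1)N=\sum_{\chi\neq\chi_0}S_\chi$ is where properness of $\G$ (i.e.\ $n\ge2$) is genuinely needed, just as the paper's proof needs $\G\neq\f_p^*$; and the final assembly $u\le\sqrt p+1$, $|A||B|\le(\sqrt p+1)^2<4p$ is clean. What your approach buys is self-containedness and a direct (non-contradiction) argument with no ``after some calculations'' step; the paper's version has the same strength of conclusion but leans on an external character-sum estimate and a preliminary case analysis. One small presentational point: you should note (as you implicitly do by dividing by $u$) that the case where $A$ or $B$ is empty is trivial.
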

\begin{proof}
    Suppose that $|A||B|\ge 4p$.
    By the assumption $\G \neq \f_p^*$.
    It follows that $|\G| \le (p-1)/2$.
    Clearly, we can assume that $|\G|+1 \ge 2\sqrt{p}$.
    On the other hand by the Cauchy--Davenport  theorem \cite{TV}, we get
$$
    \frac{p-1}{2} \ge |\G| \ge |A|+|B|-2 \ge 4\sqrt{p} -2 \,.
$$
    It follows that $p\ge 59$.
    We have for any multiplicative subgroup $\G$
    %that
    the following upper bound for its Fourier coefficients
$$
    \max_{\xi \neq 0} |\sum_{x\in \G} e^{2\pi i x \xi /p} | \le \sqrt{p-|\G|} \,,
$$
    e.g. see \cite{KS1}.
    Thus by the conditions $A+B\subseteq \G \bigsqcup \{ 0 \}$, $\G \neq \f_p^*$ and simple Fourier analysis, we
    %obtain
    get
$$
    |A| |B| p < |A| |B| (|\G|+1) + (\sqrt{p-|\G|}+1) (|A| |B|)^{1/2} ((p-|A|) (p-|B|))^{1/2} \,.
$$
    After
    %simple
    some
    calculations, we obtain $|A| |B| < 4 p$ as required.
$\hfill\Box$
\end{proof}

%\bigskip

%\begin{corollary}
%    Let $A, \G \subset \mathbb{F}_p^{*}$ be multiplicative subgroups,
%    Suppose that $A-A \subseteq \xi \G \sqcup \{ 0 \}$ for some $\xi \in \mathbb{F}_p^{*}$.
%    Then $\T(A) \ll |A|^4 \log |A|$.
%\label{cor:strange}
%\end{corollary}
%\begin{proof}
%    By Lemma \ref{l:ab_p} we see that $|A| < 2 \sqrt{p}$.
%    If $A$ has nonempty intersection with quadratic residues and non--residues then
%    the sizes of the intersections are less than $\sqrt{p}$ and we
%$\hfill\Box$
%\end{proof}

\bigskip

Now we can prove the first main result of the section.

\begin{theorem}
    Let $A, \G \subset \mathbb{F}_p^{*}$ be two
    %sufficiently large
    multiplicative subgroups,
    $|A|<\sqrt{p}$ and
    $|\G| < p^{3/4}$.
    Suppose that $A' \subseteq A+s$, $s\in \f_p$ is an arbitrary, and $A'-A' \subseteq \xi \G \sqcup \{ 0 \}$ for some $\xi \in \mathbb{F}_p^{*}$.
    Then
$$
    |A'|^6 \ll |A|^4 |\G|^{2/3} \log^{} |A| \,.
$$
    In particular, if $A-A \subseteq \xi \G \sqcup \{ 0 \}$ then
    $
        |A| \ll |\G|^{1/3} \log^{1/2} |\G| \,.
    $
\label{t:A-A_in_G}
\end{theorem}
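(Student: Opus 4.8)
The plan is to combine two estimates for the quantity $\T(A)$: the general upper bound of Proposition~\ref{p:sigma} and a structural lower bound forced by the hypothesis $A'-A'\subseteq\xi\G\sqcup\{0\}$. Since $|A|<\sqrt{p}$ we have $|A|^2<p$, so Proposition~\ref{p:sigma} applied with both subgroups equal to $A$ and $\xi=\eta=1$ gives $\T(A)\ll|A|^4\log|A|$. It therefore suffices to show that the hypothesis forces $\T(A)\gtrsim|A'|^6/|\G|^{2/3}$.

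First I would pass from $A'$ to $A''=A'-s\subseteq A$. Translation does not change the difference set, so $A''-A''=A'-A'\subseteq\xi\G\sqcup\{0\}$, and since $A''\subseteq A$ the monotonicity of multiplicative energy under taking subsets yields $\T(A)\ge\sum_{c,d\in A''}\E^\times(A''-c,A''-d)=\T(A'')$. Thus I have reduced matters to proving $\T(A'')\gtrsim|A''|^6/|\G|^{2/3}$ for a set $A''$ with $A''-A''\subseteq\xi\G\sqcup\{0\}$.

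The heart of the argument is this structural lower bound, via the representation (\ref{f:T_square}) of $\T^*(A'')=\sum_\lambda N(\lambda)^2$, where
\begin{equation*}
    N(\lambda):=\sum_{x\neq0}\Cf_3(A'',A'',A'')(x,\lambda x)
\end{equation*}
counts triples $(a_1,a_2,a_3)\in(A'')^3$ with $a_1\neq a_2$ and $(a_3-a_1)/(a_2-a_1)=\lambda$. The key observation is that such a $\lambda$ is confined to a small set: writing $\lambda=(a_3-a_1)/(a_2-a_1)$ and $\lambda-1=(a_3-a_2)/(a_2-a_1)$ and using $a_i-a_j\in\xi\G\sqcup\{0\}$, one sees $\lambda\in(\G\cup\{0\})\cap((\G+1)\cup\{1\})$, so that $\supp N\subseteq(\G\cap(\G+1))\cup\{0,1\}$. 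Because $|\G|<p^{3/4}$, Theorem~\ref{t:main_many_shifts} with $k=1$ gives $|\G\cap(\G+1)|\ll|\G|^{2/3}$, hence $|\supp N|\ll|\G|^{2/3}$. Since $\sum_\lambda N(\lambda)$ equals the number of triples with $a_1\neq a_2$, which is $\ge\tfrac12|A''|^3$, the Cauchy--Schwarz inequality gives
\begin{equation*}
    \T^*(A'')=\sum_\lambda N(\lambda)^2\ge\frac{\l(\sum_\lambda N(\lambda)\r)^2}{|\supp N|}\gg\frac{|A''|^6}{|\G|^{2/3}}\,.
\end{equation*}

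It remains to pass from $\T^*$ back to $\T$ and to combine the bounds. Here I would handle the error terms of (\ref{f:T_error}), which for $A''$ are $O(|A''|^4)$: either $|A''|\gg|\G|^{1/3}$, in which case the error is dominated by the main term and $\T(A'')\gg|A''|^6/|\G|^{2/3}$, giving $|A'|^6\ll|A|^4|\G|^{2/3}\log|A|$ after inserting the upper bound for $\T(A)$; or else $|A''|\ll|\G|^{1/3}$, in which case $|A'|^6=|A''|^4\cdot|A''|^2\le|A|^4\cdot|A''|^2\ll|A|^4|\G|^{2/3}$ directly, using $A''\subseteq A$. The stated bound follows in both regimes, and taking $A'=A$ (so $s=0$) and rearranging yields $|A|\ll|\G|^{1/3}\log^{1/2}|\G|$. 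I expect the main obstacle to be the structural confinement of the ratios $\lambda$ to $\G\cap(\G+1)$—the step that converts the subgroup hypothesis into the few--shift intersection bound of Theorem~\ref{t:main_many_shifts}—together with checking that the error terms in (\ref{f:T_error}) never dominate; the remaining ingredients are Cauchy--Schwarz and the ready--made estimate of Proposition~\ref{p:sigma}.
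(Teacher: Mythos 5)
Your argument is correct and follows essentially the same route as the paper's proof: a lower bound for $\T^*(A')$ via Cauchy--Schwarz over the ratios $\la=(a_3-a_1)/(a_2-a_1)$, the confinement of those ratios to $\G\cap(\G+1)$ of size $\ll|\G|^{2/3}$, and the upper bound $\T(A)\ll|A|^4\log|A|$ from Proposition \ref{p:sigma} together with translation invariance/monotonicity of $\T$. The only quibble is that for $|\G\cap(\G+1)|\ll|\G|^{2/3}$ the paper invokes Lemma \ref{l:mitkin}, which covers the full range $|\G|<p^{3/4}$, whereas Theorem \ref{t:main_many_shifts} with $k=1$ formally requires $p\ge 4|\G|(|\G|^{1/3}+1)$ and so misses a constant-factor sliver at the top of that range; your explicit dichotomy for the error terms of (\ref{f:T_error}) is, if anything, more careful than the paper's treatment.
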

\begin{proof}
    In the case $A'=A$ one can assume that $|A| < \sqrt{p}$ for sufficiently large $\G$, see
    Theorem \ref{t:Shparlinskii}.
    In general case, applying for\-mu\-la (\ref{f:T_square}) with $A=B=C=A'$ and using the Cauchy--Schwarz inequality, we get
$$
    |A'|^6 \ll (|A'|^3 - 2|A|^2)^2 \le \left( \sum_{x \neq 0,\, \la \neq 0} \Cf_3 (A') (x, \la x) \right)^2
        \ll
$$
\begin{equation}\label{tmp:01.04.2015_2}
        \ll
            \T(A+s) \cdot |\{ \la \neq 0 ~:~ \exists x \neq 0 \mbox{ ~s.t.~ } \Cf_3 (A,A,A) (x,\la x) \neq 0 \}| \,.
\end{equation}
    Clearly, if there is $x\neq 0$ with $\Cf_3 (A,A,A) (x,\la x) \neq 0$ then
    $\la = (a_1-a)/(a_2-a) \neq 0$
    for some $a_1,a_2,a\in A$.
    Since  $A-A \subseteq \xi \G \sqcup  \{ 0 \}$ it follows that $\la \in \G$.
    But $\la-1 = (a_1-a_2)/(a_2-a) \in \G \sqcup \{ 0 \}$.
    Hence $\la \in \G\cap ((\G \sqcup \{ 0 \})+1)$.
    We have $|\G| < p^{3/4}$.
    By Lemma \ref{l:mitkin}
    %we have
    it follows that
    $|\G\cap ((\G \sqcup \{ 0 \})+1)| \ll |\G|^{2/3}$.
    Note, finally, that by (\ref{f:T_invariance}), we have $\T(A+s) = \T(A)$.
    Returning to (\ref{tmp:01.04.2015_2}) and using Proposition \ref{p:sigma}, we obtain
$$
    |A'|^6 \ll |A|^4 \log |A| \cdot |\G|^{2/3}
$$
    as required.
$\hfill\Box$
\end{proof}

\bigskip

\begin{remark}
    Using a result from \cite{J_R-N} on lower bound for the size of the set of the form $C(C+1)$,
    combining with the arguments of \cite{Bourgain_more}, one can prove that
    for any $B\subset \f_p$, $|B| < \sqrt{p}$, one has $|B\cap (B \pm 1)| \lesssim |BB|^{54/55}$.
    Thus, the arguments of the proof of Theorem \ref{t:A-A_in_G} gives a non--trivial upper bound for {\bf any} set $A$ with $A-A \subseteq B$, namely, $|A|^6 \lesssim \T(A) |BB/BB|^{54/55}$.
    Of course, there are another
    %variants
    ways
    to prove the same, applying lower bounds for the cardinality of
    $(A\pm A)/(A\pm A)$, say.
    In the case when $A$ is a multiplicative subgroup it is more effectively to use a bound from \cite{Sh_3G},
    namely $|\{\frac{a_1-a}{a_2-a} ~:~ a,a_1,a_2\in A\}| \gg \frac{|A|^2}{\log |A|}$, $|A| < \sqrt{p}$.
    Using the Pl\"{u}nnecke's inequality \cite{TV}, it gives us $|A| \ll |BB| |B|^{-1/2} \log^{1/2} |B|$
    and $|A| \lesssim  |BB|^{108/55} |B|^{-81/55} \log^{1/2} |B|$.
\end{remark}

\bigskip

\begin{remark}
    As was pointed in \cite{LS}
    %that
    if $A,\G$ are two multiplicative subgroups and $A - A \subseteq \xi \G \sqcup \{ 0 \}$
    then because of
    $a^2_1 - a^2_2  = (a_1 - a_2) (a_1+a_2)$ we have $\{ a_1 + a_2 ~:~ a_1 \neq a_2,\, a_1,a_2 \in A \} \subseteq \G$.
    Thus the differences can be reduced to the sumsets, in principle.
    To make the reverse implication we need in existence of an element $i\in A$ such that $i^2 = -1$.
    Sumsets and, more generally, sums of two different cosets of subgroups will be considered at  the next section.
\end{remark}

\bigskip

%Theorem \ref{t:A-A_in_G} has a simple consequence on the size of maximal arithmetic progression in multiplicative subgroup.
%Other results in the direction can be found in \cite{Sh_ineq}, e.g.

%\begin{corollary}
%    Let $\G \subseteq \f_p$ be a multiplicative subgroup, $|\G| <\sqrt{p}$.
%    Then for any arithmetic progression $P\subseteq \G$ one has $|P| \ll |\G|^{1/3+o(1)}$.
%\label{c:progr}
%\end{corollary}

To complete the proof of  Theorem \ref{t:A-A_total} from the introduction,
we need in a rather standard lemma, see e.g. the proof of Lemma 5 from \cite{Shparlinski_AD}.
%We
Let us
give a proof for the sake of completeness.

\begin{lemma}
    Let $\G \subseteq \f_p$ be a multiplicative subgroup, and $k$ be a positive integer.
    Then for any nonzero distinct elements $x_1,\dots,x_k$ one has
\begin{equation}\label{f:C_for_subgroups}
    |\G \bigcap (\G+x_1) \bigcap \dots \bigcap (\G+x_k)| = \frac{|\G|^{k+1}}{(p-1)^k} + \theta k 2^{k+3} \sqrt{p} \,,
\end{equation}
    where $|\theta| \le 1$.
\label{l:C_for_subgroups}
\end{lemma}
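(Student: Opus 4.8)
The plan is to detect membership in $\G$ via orthogonality of multiplicative characters and reduce the count to Weil-type character sums. Write
$$
N := |\G \cap (\G+x_1) \cap \dots \cap (\G+x_k)| = \sum_{y \in \f_p} \G(y) \prod_{i=1}^{k} \G(y - x_i) \,,
$$
where each factor vanishes unless its argument is a nonzero element of $\G$. Let $\mathcal{H}$ be the group of multiplicative characters of $\f_p^*$ trivial on $\G$; it is cyclic of order $n = (p-1)/|\G|$, say generated by $\chi$. For $z \in \f_p^*$ one has $\G(z) = \frac{1}{n}\sum_{\psi \in \mathcal{H}}\psi(z)$, and $\G(0)=0$ agrees with the convention $\psi(0)=0$. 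Substituting and expanding gives
$$
N = \frac{1}{n^{k+1}} \sum_{\psi_0,\dots,\psi_k \in \mathcal{H}} S(\psi_0,\dots,\psi_k) \,, \qquad S(\psi_0,\dots,\psi_k) := \sum_{y} \psi_0(y)\prod_{i=1}^{k}\psi_i(y-x_i) \,.
$$
I may assume $\G \neq \f_p^*$, i.e. $n>1$, since otherwise the claim is immediate.

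Next I would isolate the main term, coming from $\psi_0 = \dots = \psi_k = \chi_0$ (the principal character). This term counts the $y$ avoiding the $k+1$ \emph{distinct} forbidden values $0, x_1, \dots, x_k$, hence equals $\frac{1}{n^{k+1}}(p - (k+1))$. Since $\frac{1}{n^{k+1}} = \frac{|\G|^{k+1}}{(p-1)^{k+1}} \le 1$, a direct comparison shows this equals $\frac{|\G|^{k+1}}{(p-1)^k}$ up to an error of modulus at most $k$.

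The crux is bounding every off-diagonal term where not all $\psi_i$ are principal. Writing $\psi_i = \chi^{m_i}$ with $0 \le m_i < n$ and setting $x_0 = 0$, one has $S(\psi_0,\dots,\psi_k) = \sum_y \chi(F(y))$ with $F(y) = \prod_{i=0}^{k}(y - x_i)^{m_i}$. By unique factorization in $\f_p[y]$ together with the distinctness of $0, x_1, \dots, x_k$, the polynomial $F$ is a perfect $n$-th power (up to a constant) if and only if every $m_i$ is divisible by $n$, i.e. every $m_i=0$, i.e. all $\psi_i$ are principal — exactly the diagonal case already handled. Hence for each off-diagonal tuple the Weil bound for multiplicative character sums applies to the non-principal character $\chi$ and the polynomial $F$, which has at most $k+1$ distinct roots, giving $|S| \le k\sqrt{p}$. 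Summing the at most $n^{k+1}-1$ off-diagonal terms and dividing by $n^{k+1}$ contributes at most $k\sqrt{p}$ in total. Combining with the main term yields the claim with error at most $k\sqrt{p} + k$, comfortably within $\theta k 2^{k+3}\sqrt{p}$.

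The main obstacle is the non-degeneracy analysis of the third step: one must pin down exactly which character tuples render $F$ a perfect $n$-th power, so that Weil's theorem is applicable with the correct count of distinct roots (note that the \emph{distinct roots} version, not the degree version, is essential, since $\deg F$ may be as large as $(k+1)(n-1)$). The distinctness and nonvanishing of the shifts $x_1, \dots, x_k$ is precisely the hypothesis that guarantees $k+1$ distinct roots and rules out spurious degeneracies, and this is where that hypothesis enters.
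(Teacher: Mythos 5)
Your proof is correct and follows essentially the same route as the paper: expand the indicator of $\G$ over the characters of order dividing $(p-1)/|\G|$, extract the all-principal main term $(p-(k+1))/n^{k+1}$, and control the remaining tuples by the Weil bound for multiplicative character sums with the distinctness of $0,x_1,\dots,x_k$ guaranteeing non-degeneracy. The only difference is bookkeeping — the paper groups the off-diagonal terms by the set of positions carrying non-principal characters and invokes Johnsen's form $(l+1)\sqrt{p}+1$, while you bound all $n^{k+1}-1$ tuples uniformly after checking the perfect-power criterion by hand, which in fact yields a sharper constant than $k2^{k+3}$.
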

\begin{proof}
    Let $d=(p-1)/|\G|$.
    By $\chi_0$ denote the principal character.
    Then
$$
    \G(x) = \frac{1}{d} \sum_{\chi \in \mathcal{X}_d} \chi (x)
        = \frac{1}{d} \left( \chi_0 (x) + \sum_{\chi \in \mathcal{X}^*_d} \chi (x) \right) \,,
$$
where $\mathcal{X}_d = \{ \chi ~:~ \chi^d = \chi_0 \}$, $\mathcal{X}^*_d = \mathcal{X}_d \setminus \{ \chi_0 \}$.
Thus, putting $x_0 = 0$, we get
$$
    |\G \bigcap (\G+x_1) \bigcap \dots \bigcap (\G+x_k)|
        =
            \frac{1}{d^{k+1}} \sum_x \prod_{j=0}^k
                \left( \chi_0 (x+x_j) + \sum_{\chi \in \mathcal{X}^*_d} \chi (x+x_j) \right)
                =
$$
$$
    =
        \frac{|\G|^{k+1} ( p-(k+1))}{(p-1)^{k+1}}
            +
                 \frac{1}{d^{k+1}} \sum_{l=1}^{k+1}\,
                 %\binom{k+1}{l}
                 \sum_{\chi_{i_1}, \dots, \chi_{i_l} \neq \chi_0}\,
                    \sum_{x \neq 0} \chi_{i_1} (x+x_{i_1}) \dots \chi_{i_l} (x+x_{i_l})
                        =
                            \frac{|\G|^{k+1}}{(p-1)^k}
            +
                \sigma \,.
$$
By the well--known Weil's bound in Johnsen's form \cite{Johnsen}
$$
    \left| \sum_{x \neq 0} \chi_{i_1} (x+x_{i_1}) \dots \chi_{i_l} (x+x_{i_l}) \right| \le (l+1) \sqrt{p} + 1 \,,
$$
we get
$$
    |\sigma|
        \le
            \frac{k}{d^{k+1}} + \frac{\sqrt{p}}{d^{k+1}} \sum_{l=1}^{k+1}\, d^l \binom{k+1}{l}\, (l+2)
                \le
                    \sqrt{p} \sum_{l=0}^{k+1}\, \binom{k+1}{l}\, (l+2)
                        \le
                            \sqrt{p} k 2^{k+3}
                            %\,.
$$
as required.
$\hfill\Box$
\end{proof}

\bigskip

Lemma above immediately implies a corollary.

\begin{corollary}
    Let $A, \G \subset \mathbb{F}_p^{*}$ be two
    %sufficiently large
    multiplicative subgroups, $|\G| \ge p^{3/4}$.
%    $|A|<\sqrt{p}$ and $|\G| < p^{3/4}$.
    Suppose that $A-A \subseteq \xi \G \sqcup \{ 0 \}$  for some $\xi \in \mathbb{F}_p^{*}$.
    Then
    $$
        |A| \ll \min\left\{ \frac{|\G| \log^{1/2} p}{\sqrt{p}}, \sqrt{p} \right\} \,.
    $$
    Moreover, if a set $A'$ belongs to a shift of the subgroup $A$, $A'-A' \subseteq \xi \G \sqcup \{ 0 \}$
    and $|A| < \sqrt{p}$ then
    $
        |A'|^6 \ll |A|^4 \log |A| \cdot |\G|^2 p^{-1} \,.
    $
\label{cor:A-A_in_G}
\end{corollary}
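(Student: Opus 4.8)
The plan is to run the proof of Theorem \ref{t:A-A_in_G} essentially verbatim, swapping out the one ingredient that is sensitive to the size of $\G$. In the small--subgroup regime the number of admissible slopes was controlled by Mitkin's Lemma \ref{l:mitkin}, giving $|\G \cap ((\G\sqcup\{0\})+1)| \ll |\G|^{2/3}$; for $|\G| \ge p^{3/4}$ I would instead feed in Lemma \ref{l:C_for_subgroups} with $k=1$, and the content of the hypothesis $|\G|\ge p^{3/4}$ is precisely that this is the range in which the character--sum estimate overtakes the Mitkin bound.

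First I would dispose of the entry $|A|\ll\sqrt p$ of the minimum. Rescaling by $\xi^{-1}$ turns the hypothesis into $(\xi^{-1}A)+(-\xi^{-1}A) = \xi^{-1}(A-A) \subseteq \G \sqcup \{0\}$, so Lemma \ref{l:ab_p} applied to the pair $\xi^{-1}A,\,-\xi^{-1}A$ yields $|A|^2 < 4p$, hence $|A| \ll \sqrt p$. This settles the second half of the minimum and, moreover, places $|A|$ below $\sqrt p$, which is exactly the condition $|\G||\Pi| = |A|^2 < p$ needed to invoke Proposition \ref{p:sigma} for $\T(A)$.

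For the sharper bound I would reproduce the energy computation (\ref{tmp:01.04.2015_2}): from the square formula (\ref{f:T_square}) for $\T^*(A')$ and Cauchy--Schwarz one obtains
$$
    |A'|^6 \ll \T(A+s) \cdot |\{ \la \neq 0 ~:~ \exists\, x\neq 0 \mbox{ ~s.t.~ } \Cf_3(A,A,A)(x,\la x) \neq 0 \}| \,,
$$
where $\T^*(A') \le \T(A') \le \T(A+s)$ by monotonicity (as $A'\subseteq A+s$) and $\T(A+s)=\T(A)$ by the invariance (\ref{f:T_invariance}). Exactly as in Theorem \ref{t:A-A_in_G}, any admissible slope is $\la=(a_1-a)/(a_2-a)\in\G$ with $\la-1\in\G\sqcup\{0\}$, using $A-A\subseteq\xi\G\sqcup\{0\}$, so the slope set lies inside $\G\cap((\G\sqcup\{0\})+1)$. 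Now Lemma \ref{l:C_for_subgroups} with $k=1$ gives
$$
    |\G\cap(\G+1)| = \frac{|\G|^2}{p-1} + \theta\, 2^4 \sqrt p \,, \qquad |\theta|\le 1 \,,
$$
so the slope set has size $\ll |\G|^2/p$. Combining this with $\T(A) \ll |A|^4 \log|A|$ from Proposition \ref{p:sigma} yields the \emph{moreover} estimate $|A'|^6 \ll |A|^4 \log|A| \cdot |\G|^2 p^{-1}$; specializing $A'=A$ and solving for $|A|$ produces $|A| \ll |\G|\log^{1/2} p/\sqrt p$, which is the remaining entry of the minimum.

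The one point I would check with care, and which is the crux rather than routine, is that the threshold $p^{3/4}$ is exactly what makes the main term of Lemma \ref{l:C_for_subgroups} dominate its Weil error: $|\G|^2/(p-1) \gtrsim \sqrt p$ is equivalent to $|\G|\gtrsim p^{3/4}$, so only for $|\G|\ge p^{3/4}$ does $|\G\cap(\G+1)|\ll|\G|^2/p$ hold. A secondary bookkeeping issue is the borderline $|A|\asymp\sqrt p$, where the hypothesis $|A|^2<p$ of Proposition \ref{p:sigma} is no longer strictly available; there I would pass to the larger--subgroup variant Proposition \ref{p:sigma_l}, which provides the same upper bound for $\T(A)$ in that range. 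Everything else is a transcription of the argument already carried out for Theorem \ref{t:A-A_in_G}.
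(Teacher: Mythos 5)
Your overall route is the paper's route: the paper proves Corollary \ref{cor:A-A_in_G} exactly by rerunning the proof of Theorem \ref{t:A-A_in_G} with Lemma \ref{l:C_for_subgroups} (case $k=1$) substituted for Mitkin's lemma in the bound on the slope set $\G\cap((\G\sqcup\{0\})+1)$, and by invoking Lemma \ref{l:ab_p} for the $\sqrt p$ entry of the minimum. Your observation that $p^{3/4}$ is precisely the threshold at which the main term $|\G|^2/(p-1)$ of (\ref{f:C_for_subgroups}) dominates the Weil error, and overtakes the Mitkin bound $|\G|^{2/3}$, is the correct reading of the hypothesis.

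The genuine problem is your handling of the borderline regime $\sqrt p\le|A|<2\sqrt p$, which Lemma \ref{l:ab_p} does not exclude and which you correctly flag. You propose to cover it by Proposition \ref{p:sigma_l}, claiming it gives ``the same upper bound for $\T(A)$ in that range.'' It does not: taking $B=C=D=A$ and $q=p$ in (\ref{f:sigma_l}), the middle term is $|B||C||D|q=|A|^3p\asymp|A|^5$ when $|A|\asymp\sqrt p$, which swamps $|A|^4\log|A|$. Feeding $\T(A)\ll|A|^5$ into $|A|^6\ll\T(A)\cdot|\G|^2p^{-1}$ yields only $|A|\ll|\G|^2/p$, which for $|\G|\ge p^{3/4}$ is $\ge\sqrt p$ and therefore adds nothing to Lemma \ref{l:ab_p}; in particular it does not produce $|A|\ll|\G|\log^{1/2}p/\sqrt p$. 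The paper instead disposes of this case by ``taking a half of a set by Lemma \ref{l:ab_p} if its needed'': since $|A|<2\sqrt p$, one passes to a subset of half the size, which lies strictly below $\sqrt p$ and keeps the argument within the range where the bound $\T\ll|A|^4\log|A|$ of Proposition \ref{p:sigma} is available, at the cost of an absolute constant. (That device is itself stated very tersely in the paper; an alternative repair is to check that the proof of Proposition \ref{p:sigma} via Lemma \ref{l:mitkin} tolerates $|A|^2<4p$, since Mitkin's hypotheses $(|\G||\Pi|)^2|\Theta|<p^3$ and $|\Theta|\le 33^{-3}|\G||\Pi|$ still have room there.) As written, your appeal to Proposition \ref{p:sigma_l} makes that step of the proof fail quantitatively and should be replaced.
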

\begin{proof}
Using previous lemma in the case $k=1$, combining with the arguments of the proof of Theorem \ref{t:A-A_in_G},
%and Corollary \ref{cor:strange},
and taking a half of a set by Lemma \ref{l:ab_p} if its needed,
we obtain
    $
        |A| \ll \frac{|\G| \log^{1/2} p}{\sqrt{p}} \,.
    $
It remains to recall a bound which gives us Lemma \ref{l:ab_p} for the size of an arbitrary $A$ with
$A-A \subseteq \xi \G \sqcup \{ 0 \}$ .
This completes the proof.
$\hfill\Box$
\end{proof}

\bigskip

Corollary above and Theorem \ref{t:A-A_in_G} give us Theorem \ref{t:A-A_total} from the introduction.

%\section{The proof}
\section{The proof of the main result}
\label{sec:proof}

The next lemma gives us an expression of $\T (A,B,C,D)$ via coefficients $c_\a$ of the sets $A,B,C,D$.

\begin{lemma}
    Let $\G \subseteq \f_q^*$ be a multiplicative subgroup.
    Let also $A,B,C,D \subseteq \f_p$ be sets, and $A-C, B-D \subseteq \G$.
    Then
\begin{equation}\label{f:E_c_k}
    \T (A,B,C,D) = |\G| \sum_{\a=1}^{|\G|} \sum_{c\in C} \sum_{d\in D} |c_\a (A-c)|^2 |c_\a (B-d)|^2 \,.
\end{equation}
\label{l:E_c_k}
\end{lemma}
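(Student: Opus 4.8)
The plan is to unfold the definition of $\T(A,B,C,D)$ as a sum of multiplicative energies, fix the shifts $c\in C$, $d\in D$, and recognize each inner multiplicative energy $\E^\times(A-c,B-d)$ as a quantity that can be diagonalized using the orthonormal character family $\{f_\a\}$ on $\G$. The key structural input is the hypothesis $A-C,\,B-D\subseteq \G$: this guarantees that for every fixed $c\in C$ the shifted set $A-c$ lies inside $\G$ (and likewise $B-d\subseteq\G$), so that the characteristic functions of $A-c$ and $B-d$ are genuinely functions on $\G$ and admit an expansion in the basis $\{f_\a\}_{\a\in[|\G|]}$ with coefficients $c_\a(A-c)$, $c_\a(B-d)$ as defined in Section~\ref{sec:operators}.

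First I would write, for fixed $c,d$,
$$
    \E^\times(A-c,B-d) = |\{\, u_1 v_1 = u_2 v_2 ~:~ u_i\in A-c,\ v_i\in B-d \,\}|
        = \sum_{\la} \big((A-c)\c_\times (B-d)\big)(\la)^2\,,
$$
expressing the multiplicative energy through multiplicative convolution/correlation on $\G$, exactly in the style of the additive identity recorded after~(\ref{f:convolutions}). The point is that multiplicative energy on subsets of $\G$ is an inner product of the form $\langle H \phi,\phi\rangle$ for an operator $H$ that is both hermitian and $\G$-invariant in the sense of~(\ref{f:subgroup_eigenvalues}); by Lemma~\ref{l:subgroup_eigenvalues} its eigenfunctions are precisely the $f_\a$. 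Concretely, Parseval/Plancherel with respect to the orthonormal system $\{f_\a\}$ turns $\E^\times(A-c,B-d)$ into a weighted sum of $|c_\a(A-c)|^2$ against the Fourier-type mass of $B-d$, and the multiplicative structure forces the pairing to be diagonal in $\a$. This is where the factor $|\G|$ arises: the normalization $f_\a = |\G|^{-1/2}\chi_\a$ in~(\ref{f:chi_Gamma}) carries a $|\G|^{-1/2}$, and expressing the energy (a count, hence built from the un-normalized characters $\chi_\a$) in terms of the normalized coefficients $c_\a$ produces exactly one surplus factor of $|\G|$.

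Having obtained $\E^\times(A-c,B-d) = |\G|\sum_{\a=1}^{|\G|} |c_\a(A-c)|^2\,|c_\a(B-d)|^2$ for each fixed pair, I would then sum over $c\in C$ and $d\in D$ and invoke the definition~(\ref{f:T_energy}) of $\T(A,B,C,D)$ to arrive at~(\ref{f:E_c_k}). The main obstacle I anticipate is the bookkeeping in the diagonalization step: one must check carefully that the cross terms vanish, i.e.\ that the multiplicative characters $\chi_\a$ really diagonalize the bilinear form defining $\E^\times$ on $\G$, and that the character orthogonality is being applied over the correct group (the cyclic group $\G$ of order $|\G|$, with the $e(\a l/t)$ parametrization of~(\ref{f:chi_Gamma})) rather than over all of $\f_p^*$. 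A secondary subtlety is tracking the single power of $|\G|$ through the normalization, so that the constant in front is exactly $|\G|$ and not $|\G|^{1/2}$ or $1$; pinning this down amounts to comparing the energy expressed in $\chi_\a$ with the coefficients $c_\a$ defined via the orthonormal $f_\a$. Both are routine once the $\G$-invariance of the underlying operator is made explicit, so the proof reduces to an honest but elementary Fourier-analytic computation on the subgroup $\G$.
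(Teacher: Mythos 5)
Your proposal is correct and follows essentially the same route as the paper: the paper likewise reduces everything to the single identity $\E^\times(A,B)=|\G|\sum_{\a}|c_\a(A)|^2|c_\a(B)|^2$ for $A,B\subseteq\G$ (quoted there as ``a direct calculation'' or Lemma 8 of \cite{Sh_energy}) and then sums over $c\in C$, $d\in D$ using the definition (\ref{f:T_energy}). Your Parseval computation on the cyclic group $\G$ is exactly that direct calculation, with the factor $|\G|$ arising from the normalization $f_\a=|\G|^{-1/2}\chi_\a$ just as you say.
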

\begin{proof}
A direct calculation (or see \cite{Sh_energy}, Lemma 8) shows that
$$
    \E^\times (A,B) = |\G| \sum_{\a=1}^{|\G|} |c_\a (A)|^2 |c_\a (B)|^2
$$
for any $A,B\subseteq \G$.
Using formula (\ref{f:T_energy}) and the fact that $A-c, B-d \subseteq \G$ for any $c\in C$, $d\in D$,
we get (\ref{f:E_c_k}).
This completes the proof.
$\hfill\Box$
\end{proof}

\bigskip

Using the eigenvalues technique, we can obtain a rather general result on differences inside multiplicative subgroups.

\begin{proposition}
    Let $A,\G\subseteq \f^*_p$ be multiplicative subgroups, a set $C$ belongs to a shift of $A$,
    $3 \le |C|$, $|A| <\sqrt{p}$.
    Let also $g : \f_q \to \R^{+}$ be an arbitrary even $\G$--invariant function.
    Suppose that $C-C \subseteq \xi \G \sqcup \{ 0 \}$ for some $\xi \in \mathbb{F}_p^{*}$.
    Then
\begin{equation}\label{f:weight_A}
    \left( \sum_x g(x) (C\c C) (x) \right)^2
        \ll
            \frac{|A|^4 \log |A|}{|C|^2 |\G|} \cdot \sum_x g^2 (x) (\G\c \G) (x) \,.
\end{equation}
%    Taking $g(x) = (\xi \G) (x)$ and assuming $|\G| < p^{3/4}$, we obtain
%\begin{equation}\label{f:weight_A_p}
%\end{equation}
\label{p:weight_A}
\end{proposition}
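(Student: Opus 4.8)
We want to bound the weighted quantity $\left(\sum_x g(x)(C\circ C)(x)\right)^2$ in terms of the energy-type expression $\sum_x g^2(x)(\G\circ\G)(x)$, with the crucial gain being the factor $|A|^4\log|A|/(|C|^2|\G|)$. Since $C$ sits in a shift of $A$ and $C-C\subseteq \xi\G\sqcup\{0\}$, this is exactly the situation where the bound on $\T(A)$ from Proposition~\ref{p:sigma} should be the engine. Let me sketch how these pieces fit together.

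\medskip

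\textbf{The plan.} The left-hand side $\sum_x g(x)(C\circ C)(x)$ is a quadratic form. Since $g$ is even and $\G$-invariant and $C-C\subseteq \xi\G\sqcup\{0\}$, I expect to recognize $\sum_x g(x)(C\circ C)(x)$ as (essentially) $\langle \oT^g_{\xi\G}\, c, c\rangle$ or a closely related inner product involving the operator $\oT^g$ restricted appropriately; the point of the operator formalism in section~\ref{sec:operators} is precisely that such an operator is normal (by the Corollary to Lemma~\ref{l:subgroup_eigenvalues}) and diagonalized by the characters $\{f_\a\}$. First I would write $\sum_x g(x)(C\circ C)(x) = \sum_\a \mu_\a |c_\a(C)|^2$ or the analogous expansion, where $\mu_\a$ are the eigenvalues of the relevant operator and $c_\a$ are the character coefficients of (a shift of) $C$. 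Then squaring and applying Cauchy--Schwarz in $\a$ should produce one factor $\sum_\a |c_\a(C)|^4$ and one factor $\sum_\a \mu_\a^2$. The second factor is computed by trace formula (\ref{f:trace2}), giving exactly $\sum_x g^2(x)(\G\circ\G)(x)$ up to the normalization by $|\G|$ — this is where the right-hand side of (\ref{f:weight_A}) comes from.

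\medskip

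\textbf{The energy factor.} The remaining factor $\sum_\a |c_\a(C)|^4$ is where $\T$ enters. By Lemma~\ref{l:E_c_k} applied in the symmetric case (with the sets all equal to a shift of $C$, arranged so the differences land in $\G$), the sum $|\G|\sum_\a |c_\a(C-c)|^2|c_\a(C-d)|^2$ summed over $c,d$ reconstructs $\T(C)$; the diagonal term, where $c=d$, yields exactly $|\G|\sum_\a |c_\a(C)|^4$ after translating $C$ into $\G$. Thus $\sum_\a |c_\a(C)|^4 \ll \T(C)/(|C|^2|\G|)$, the $|C|^2$ coming from restricting the double sum over $c,d\in C$ to the diagonal (so we divide by $|C|^2$ to isolate one term, or more carefully, lower-bound $\T$ by the diagonal contribution). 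Finally, since $C$ lies in a shift of the subgroup $A$, the invariance (\ref{f:T_invariance}) gives $\T(C)\le \T(A)$ (or $\T$ of a shift of $A$, which equals $\T(A)$), and Proposition~\ref{p:sigma} with $\Gamma=\Pi=A$ bounds $\T(A)\ll |A|^4\log|A|$ in the range $|A|<\sqrt p$. Combining the three estimates yields (\ref{f:weight_A}).

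\medskip

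\textbf{Main obstacle.} The delicate step is the bookkeeping that links the weighted quadratic form on the left to the character-coefficient expansion, since $C$ itself need not lie in $\G$ — only its difference set does, and only after a shift does $C$ become a subset of a subgroup. I expect to fix a shift $s$ with $C-s\subseteq A\subseteq \f^*_p$ and exploit that $g$ is $\G$-invariant and even so that $\sum_x g(x)(C\circ C)(x)$ is insensitive to translating $C$; then the operator $\oT^g$ acts on functions supported on $\xi\G$ (where $C-C$ lives) and is diagonalized by the relevant characters. Keeping the normalizations $|\G|^{-1/2}$ from (\ref{f:chi_Gamma}) consistent through the Cauchy--Schwarz step, and correctly matching the diagonal of $\T$ to $\sum_\a|c_\a(C)|^4$, is the part that requires care; the error terms in (\ref{f:T_error}) must be checked to be negligible under the hypotheses $3\le|C|$ and $|A|<\sqrt p$, but these are exactly the ``usually negligible'' terms noted after (\ref{f:T_error}).
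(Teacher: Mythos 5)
Your toolkit is the right one (the operator $\oT^g_\G$, its diagonalization by the characters $f_\a$, the trace formula, Cauchy--Schwarz in $\a$, and the bound $\T(A)\ll|A|^4\log|A|$ from Proposition \ref{p:sigma}), but the architecture of your argument loses the factor $|C|^2$ in the denominator of (\ref{f:weight_A}), and that factor is exactly what makes the proposition useful (it is what turns the conclusion into $|A|\ll|\G|^{1/3+o(1)}$ in Theorem \ref{t:A-A_in_G}). The gap is in your treatment of the ``energy factor''. You propose to bound $\sum_\a|c_\a(C)|^4$ by $\T(C)/(|C|^2|\G|)$ by ``restricting the double sum over $c,d$ to the diagonal and dividing by $|C|^2$''. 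But a single diagonal term of $\T(C)=\sum_{c,d}\E^\times(C-c,C-d)$ only satisfies $\E^\times(C-c,C-c)\le\T(C)$; lower-bounding $\T$ by its diagonal gives no division by $|C|^2$, since nothing forces the off-diagonal terms to dominate (even picking the minimizing $c$ by averaging buys only one factor of $|C|$). So your route yields at best $\left(\sum_x g(x)(C\c C)(x)\right)^2\ll|\G|^{-1}\T(A)\sum_x g^2(x)(\G\c\G)(x)$, which is weaker than (\ref{f:weight_A}) by $|C|^2$.

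The paper obtains the $|C|^2$ differently, and this is the idea you are missing: for \emph{each} $c\in C$ one has $C'_c-c\subseteq\G$, where $C'_c=C\setminus\{c\}$ (this is also the correct resolution of your ``main obstacle'' --- one shifts $C$ by its own elements, not by the shift $s$ taking $C$ into $A$), and the quadratic form $\langle\oT^g_\G(C'_c-c),C'_c-c\rangle=\sum_\a\mu_\a|c_\a(C'_c-c)|^2$ equals $\sum_x g(x)(C\c C)(x)$ up to an error $2(g\c C)(c)$, absorbed using $|C|\ge3$ (this, not the error terms of (\ref{f:T_error}), is where that hypothesis enters). Summing this identity over all $c\in C$ puts a factor $|C|$ on the left \emph{before} Cauchy--Schwarz; squaring and then applying Cauchy--Schwarz in $\a$ produces $\sum_\a|\mu_\a|^2$ times the \emph{full} double sum $\sum_\a\sum_{c,\t{c}\in C}|c_\a(C'_c-c)|^2|c_\a(C'_{\t{c}}-\t{c})|^2=|\G|^{-1}\sum_{c,\t{c}}\E^\times(C'_c-c,C'_{\t{c}}-\t{c})\le|\G|^{-1}\T(A)$ --- the whole of $\T$, not just its diagonal. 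This gives $|C|^2\cdot(\mbox{LHS})^2\ll\sum_\a|\mu_\a|^2\cdot|\G|^{-1}\T(A)$, and Lemma \ref{l:sum_of_squares} together with (\ref{f:trace2}) finishes the proof via $\sum_\a|\mu_\a|^2\le\sum_x g^2(x)(\G\c\G)(x)$ (your ``up to normalization by $|\G|$'' here is a separate slip: the $|\G|^{-1}$ comes from the identity $\E^\times(X,Y)=|\G|\sum_\a|c_\a(X)|^2|c_\a(Y)|^2$ of Lemma \ref{l:E_c_k}, not from the trace formula).
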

\begin{proof}
    Without loosing of generality we can assume that $\xi=1$.
%    Inequality (\ref{f:weight_A'}) follows from (\ref{f:weight_A})
%if one choose the weight $g(x)$ in an optimal way, namely, $g(x) = (A\c A) (x) / (\G \c \G) (x)$.
%    Thus, we need to check (\ref{f:weight_A}).
%
%
    Consider the operator $\oT^g_\G$ and denote by $\{ \mu_\a\}_{\a=1}^{|\G|}$ its eigenvalues.
    Note that for any $c\in C$ one has $C-c \subseteq \G \bigsqcup \{0\}$.
    Putting $C'_c = C\setminus \{ c\}$, we have $C'_c - c\subseteq \G$.
    Thus
$$
    \sum_x g(x) (C\circ C) (x) - 2 (g\circ C) (c)
        \le
            \langle \oT^g_\G (C'_c-c), C'_c-c \rangle = \sum_\a |c_\a (C'_c-c)|^2 \mu_\a \,.
$$
    Summing over $c\in C$ and using the condition $|C| \ge 3$, we obtain
\begin{equation}\label{tmp:02.05.2015_1}
    |C| \sum_x g(x) (C\circ C) (x)
        \ll
            \sum_\a \mu_\a \sum_{c\in C} |c_\a (C'_c-c)|^2 \,.
\end{equation}
    %Using
    Applying
    the Cauchy--Schwarz inequality, we get
$$
    |C|^2 \left( \sum_x g(x) (C\circ C) (x) \right)^2
        \ll
            \sum_\a |\mu_\a|^2 \cdot \sum_\a \sum_{c,\t{c} \in C} |c_\a (C'_c-c)|^2 |c_\a (C'_{\t{c}}-\t{c})|^2 \,.
$$
    %Applying
    Using
    formulas (\ref{f:trace2}), (\ref{f:T_invariance}), Lemma \ref{l:sum_of_squares}, as well as the arguments of the proof of Lemma \ref{l:E_c_k}, we get
$$
    |C|^2 \left( \sum_x g(x) (C\circ C) (x) \right)^2
        \ll
            |\G|^{-1} \sum_x g^{2} (x) (\G \c \G) (x)
                \cdot
                    \sum_{c,\t{c} \in C} \E^\times (C'_c-c, C'_{\t{c}}-\t{c})
                        \le
$$
$$
    \le
            |\G|^{-1} \sum_x g^{2} (x) (\G \c \G) (x)
                \cdot
                    \sum_{a,\t{a} \in A} \E^\times (A-a, A-\t{a})
                        =
                            |\G|^{-1} \sum_x g^{2} (x) (\G \c \G) (x) \cdot \T(A) \,.
$$
Finally, recalling Proposition \ref{p:sigma}, we obtain (\ref{f:weight_A}).
This completes the proof.
$\hfill\Box$
\end{proof}

\bigskip

Taking the weight $g(x)$ to be the characteristic function of the set $(-\xi \G) \bigcup \xi \G\bigsqcup \{ 0 \}$,
we get Theorem \ref{t:A-A_in_G} and Corollary \ref{cor:A-A_in_G}.

\bigskip

\begin{remark}
As we have seen the arguments of the proof of the proposition above allow to replace $A$ onto its a (large) subset $A'\subseteq A$ in spirit of Theorem \ref{t:A-A_in_G} and Corollary \ref{cor:A-A_in_G} of the previous section.
On the other hand there is
%The reason is an
an asymmetry
%of
between
$A$ and $\G$.
We use the group properties of $\G$ extensively but the only we need about $A$ is that $\T(A)$ is small.
For example, if $A$ is an arithmetic progression then our method works similarly.
\label{r:A_A'}
\end{remark}

\bigskip

Now we can consider the case of general sumsets in multiplicative subgroups.
We begin with a lemma which says that the average value of the action of an arbitrary operator $\oT^g_\G$
to multiplicative shifts of two functions can be
%expressed
calculated
%as an "action"\, to these functions
easily.
The crucial thing here that the weight $g$ is very general and does not require to be $\G$--invariant.

\begin{lemma}
    Let $\G\subseteq \f^*_q$ be a multiplicative subgroup.
    Let also $h_1,h_2$ be any functions with supports on $\G$ and $g : \f_q \to \C$ be an arbitrary function.
    Then
\begin{equation}\label{f:average_mult}
    \frac{1}{|\G|} \sum_{\gamma \in \G} \langle \oT_\G^g h^\gamma_1, h^\gamma_2 \rangle
        =
            \sum_{\a=1}^{|\G|} c_\a (h_1) \ov{c_\a (h_2)} \cdot \langle \oT_\G^g f_\a, f_\a \rangle \,.
\end{equation}
\label{l:average_mult}
\end{lemma}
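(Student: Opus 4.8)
The plan is to diagonalize everything in the character basis $\{f_\a\}$ and exploit the fact that multiplicative dilation by $\gamma\in\G$ acts on each $f_\a$ by a scalar. First I would expand both $h_1$ and $h_2$ in the orthonormal family $\{f_\a\}_{\a\in[|\G|]}$, which is legitimate because both functions are supported on $\G$: write $h_i=\sum_\a c_\a(h_i)f_\a$. The key structural input is the identity $f_\a^\gamma=\chi_\a(\gamma)f_\a$ for every $\gamma\in\G$, which is immediate from (\ref{f:chi_Gamma}) together with the multiplicativity $\chi_\a(\gamma x)=\chi_\a(\gamma)\chi_\a(x)$. In particular each dilate $h_i^\gamma$ is again supported on $\gamma^{-1}\G=\G$ (since $\gamma\in\G$), so the action $\oT^g_\G h_i^\gamma$ is well defined and the left-hand side of (\ref{f:average_mult}) makes sense.

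Next I would substitute these expansions into $\langle \oT^g_\G h_1^\gamma, h_2^\gamma\rangle$. Using linearity of $\oT^g_\G$ and the sesquilinearity of $\langle\cdot,\cdot\rangle$ (linear in the first argument, conjugate-linear in the second), together with $h_1^\gamma=\sum_\a c_\a(h_1)\chi_\a(\gamma)f_\a$ and $h_2^\gamma=\sum_\beta c_\beta(h_2)\chi_\beta(\gamma)f_\beta$, the bilinear form becomes the double sum
$$
\langle \oT^g_\G h_1^\gamma, h_2^\gamma\rangle=\sum_{\a,\beta} c_\a(h_1)\,\ov{c_\beta(h_2)}\;\chi_\a(\gamma)\,\ov{\chi_\beta(\gamma)}\;\langle \oT^g_\G f_\a, f_\beta\rangle \,.
$$

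The final step is to average over $\gamma\in\G$. The only $\gamma$-dependence lives in the scalar factor $\chi_\a(\gamma)\ov{\chi_\beta(\gamma)}$, and since the $\chi_\a$ are unimodular characters of the cyclic group $\G$, the orthogonality relation $\frac{1}{|\G|}\sum_{\gamma\in\G}\chi_\a(\gamma)\ov{\chi_\beta(\gamma)}=\delta_{\a\beta}$ collapses the double sum onto its diagonal, yielding exactly the right-hand side of (\ref{f:average_mult}). It is worth emphasizing that nothing is assumed about $g$ beyond being a function: one never uses that the $f_\a$ are eigenfunctions of $\oT^g_\G$, only that they are common eigenfunctions of the dilation operators $h\mapsto h^\gamma$. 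This is precisely why the identity holds for completely arbitrary (in particular non-$\G$-invariant) $g$, which is the point the lemma is meant to capture.

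I do not expect a genuine obstacle here; the whole content is the diagonalization-by-character-orthogonality bookkeeping. The only points that require a little care are keeping the conjugations straight in the second slot of the inner product and confirming that each $h_i^\gamma$ stays supported inside $\G$ so that all the $\oT^g_\G$-actions are legitimate, and both of these are immediate consequences of $\gamma\in\G$.
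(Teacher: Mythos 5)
Your proposal is correct and follows essentially the same route as the paper: expand $h_1,h_2$ in the basis $\{f_\a\}$, use $f_\a^\gamma=\chi_\a(\gamma)f_\a$ to pull the $\gamma$-dependence into a scalar, and collapse the double sum via character orthogonality. The paper's proof is exactly this computation written as one chain of equalities, so there is nothing to add.
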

\begin{proof}
We have $h_1(x) = \sum_\a c_\a (h_1) f_\a (x)$ and
$h_2(x) = \sum_\a c_\a (h_2) f_\a (x)$.
Thus by the orthogonality of the characters, we have
$$
    \sum_{\gamma \in \G} \langle \oT_\G^g h^\gamma_1, h^\gamma_2 \rangle
        =
            \sum_{\gamma \in \G} \sum_{x,y} \oT_\G^g (x,y) h_1 (\gamma x) \ov{h_2 (\gamma x)}
                =
$$
$$
                =
                \sum_{\a,\beta} c_\a (h_1) \ov{c_\beta (h_2)}
                \cdot \langle \oT_\G^g f_\a, f_\beta \rangle
                \cdot \left( \sum_{\gamma \in \G} \chi_\a (\gamma) \ov{\chi_\beta (\gamma)}  \right)
                    =
            |\G| \sum_{\a} c_\a (h_1) \ov{c_\a (h_2)} \cdot \langle \oT_\G^g f_\a, f_\a \rangle \,.
$$
as required.
%This completes the proof.
$\hfill\Box$
\end{proof}

\bigskip

Using lemma above we prove a general result on sumsets in multiplicative subgroups.

Having a set $Q\subseteq \f_p$ and a multiplicative subgroup $\G$ denote by $S_\G (Q)$ a minimal $\G$--invariant set containing $Q$.
Note that $S_\G (\xi Q) = S_\G (Q)$ for any nonzero $\xi$.
Clearly, $|S_\G (Q)| \le |\G Q| \le |\G| |Q|$.
Sometimes better estimates holds.
For example, if $Q = Q_1-Q_1$, where $Q_1 \subseteq \G$ then $|S_\G (Q)| \le |\G - \G|$.

\begin{proposition}
    Let $\G\subset \f^*_p$ be a multiplicative subgroup,
    % $|\G| \ge 3$,
    $A, B \subseteq \f_p$ be two sets.
    %, $3 \le |A| <\sqrt{p}$.
    Let also $g : S_\G (A-A) \to \R^{+}$ be an arbitrary even $\G$--invariant function.
    Suppose that $A-B \subseteq \G \sqcup \{ 0 \}$
    and
\begin{equation}\label{cond:sumsets_g}
    |B| \sum_x g(x) (A\c A) (x) \ge 3 \sum_x g(x) (A\c B) (x) \,.
\end{equation}
    Then
\begin{equation}\label{f:sumsets_g}
    \left( \sum_x g(x) (A\c A) (x) \right)^2
        \ll
            \frac{\T(A,B)}{|B|^2 |\G|} \cdot \sum_x g^2 (x) (\G\c \G) (x) \,.
\end{equation}
%    If $A-B \subseteq \G$ then inequality (\ref{f:sumsets_g}) holds without condition (\ref{cond:sumsets_g}).
\label{p:sumsets_g}
\end{proposition}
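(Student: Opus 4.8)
The plan is to mirror the proof of Proposition \ref{p:weight_A}, transferring the role of the coset $C$ to the family of additive shifts $\{A-b\}_{b\in B}$ and using the hypothesis $A-B\subseteq \G\sqcup\{0\}$ in place of $C-C\subseteq \xi\G\sqcup\{0\}$. Since $g$ is even and $\G$--invariant, the operator $\oT^g_\G$ is normal (by the corollary to Lemma \ref{l:subgroup_eigenvalues}) and is diagonalised by the orthonormal family $\{f_\a\}$ with real eigenvalues $\{\mu_\a\}_{\a=1}^{|\G|}$; one extends $g$ by zero off $S_\G(A-A)$ so that $\oT^g_\G$ and the trace formula (\ref{f:trace2}) make sense on all of $\G-\G$, which is harmless since $\langle \oT^g_\G h,h\rangle$ will only ever evaluate $g$ on $A-A$. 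For each $b\in B$ put $h_b:=(A-b)\cap\G$; by the inclusion $A-b\subseteq\G\sqcup\{0\}$ this is just the shift $A-b$ with the single point $0$ deleted when $b\in A$, and it is supported on $\G$. The first step is to record the analogue of the opening inequality of Proposition \ref{p:weight_A}, namely
\[
 \sum_x g(x)(A\c A)(x) - 2(g\c A)(b) \le \langle \oT^g_\G h_b, h_b\rangle \,,
\]
which holds because $((A-b)\c(A-b))(z)=(A\c A)(z)$ by translation invariance of $\c$, while deleting the point $0$ costs at most $2(g\c A)(b)$ (using $g\ge 0$ and that $g$ is even).

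Next I would sum over $b\in B$. Using the identity $\sum_{b\in B}(g\c A)(b)=\sum_x g(x)(A\c B)(x)$ (valid since $g$ is even), the subtracted term becomes $2\sum_x g(x)(A\c B)(x)$, and the normalising hypothesis (\ref{cond:sumsets_g}) bounds this by $\tfrac23|B|\sum_x g(x)(A\c A)(x)$. Hence a constant fraction of the main term survives and
\[
 |B| \sum_x g(x)(A\c A)(x) \ll \sum_{b\in B} \langle \oT^g_\G h_b, h_b\rangle = \sum_\a \mu_\a \sum_{b\in B} |c_\a(h_b)|^2 \,,
\]
where the last equality is the eigen--expansion $\langle \oT^g_\G h_b,h_b\rangle=\sum_\a \mu_\a|c_\a(h_b)|^2$ (Lemma \ref{l:average_mult} specialised to a $\G$--invariant weight).

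Finally I would square and apply the Cauchy--Schwarz inequality in $\a$, exactly as in Proposition \ref{p:weight_A}:
\[
 \left(|B|\sum_x g(x)(A\c A)(x)\right)^2 \ll \sum_\a |\mu_\a|^2 \cdot \sum_\a \left(\sum_{b\in B}|c_\a(h_b)|^2\right)^2 \,.
\]
Here $\sum_\a |\mu_\a|^2 = \sum_x g^2(x)(\G\c\G)(x)$ by (\ref{f:trace2}), while expanding the inner square and invoking the identity $\E^\times(X,Y)=|\G|\sum_\a |c_\a(X)|^2|c_\a(Y)|^2$ (from the proof of Lemma \ref{l:E_c_k}) turns the second factor into $|\G|^{-1}\sum_{b,b'\in B}\E^\times(h_b,h_{b'})$. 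Since $h_b\subseteq A-b$, multiplicative energy is monotone, so $\sum_{b,b'}\E^\times(h_b,h_{b'})\le \sum_{b,b'\in B}\E^\times(A-b,A-b')=\T(A,B)$ by the definition of $\T$. Substituting these two evaluations and dividing by $|B|^2$ gives (\ref{f:sumsets_g}).

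The \emph{delicate point} is the error management of the middle step. Unlike the symmetric situation of Proposition \ref{p:weight_A}, the two sides of the autocorrelation are now $A$ and $B$, so the correction produced by deleting $0$ from $A-b$ is controlled by $(A\c B)$ rather than $(A\c A)$; it is precisely the normalising hypothesis (\ref{cond:sumsets_g}) that forbids this correction from swallowing the main term, and the factor $3$ there is exactly what leaves a positive proportion behind. Everything else is a faithful rerun of the eigenvalue argument already used for Proposition \ref{p:weight_A}.
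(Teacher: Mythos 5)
Your proof is correct and follows essentially the same route as the paper's: shift $A$ by each $b\in B$, delete the point $0$, control the resulting loss by condition (\ref{cond:sumsets_g}), expand in the character basis $\{f_\a\}$, and finish with Cauchy--Schwarz, the trace identity (\ref{f:trace2}) and the identification of $\sum_{b,b'\in B}\E^{\times}(A-b,A-b')$ with $\T(A,B)$. The only cosmetic difference is that you invoke the spectral decomposition of $\oT^{g}_{\G}$ directly (legitimate here since $g$ is $\G$--invariant, so the $f_\a$ are exact eigenfunctions), whereas the paper routes through the $\gamma$--averaged identity of Lemma \ref{l:average_mult} together with Lemmas \ref{l:convex_eigenvalues} and \ref{l:sum_of_squares}, a formulation chosen so that the argument also survives for non--invariant weights.
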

\begin{proof}
    For any $b\in B$ one has $A-b \subseteq \G \sqcup \{ 0 \}$.
    Putting $A'_b = A\setminus \{ b \}$, we have $A'_b - b \subseteq \G$.
    Applying Lemma \ref{l:average_mult} with $h_1=h_2 = A'_b - b$, we get for any $b\in B$
\begin{equation}\label{tmp:23.04.2015_2}
    \frac{1}{|\G|} \sum_{\gamma \in \G} \langle \oT_\G^g (A'_b - b)^\gamma, (A'_b - b)^\gamma \rangle
        =
            \sum_{\a=1}^{|\G|} |c_\a ((A'_b - b)^\gamma)|^2 \cdot \langle \oT_\G^g f_\a, f_\a \rangle \,.
\end{equation}
Summing over $b\in B$
%, using condition (\ref{cond:sumsets_g})
and
using $\G$--invariance of $g$
as well as the arguments of the proof of Proposition \ref{p:weight_A},
we see that the left--hand side of (\ref{tmp:23.04.2015_2}) is
$$
    \frac{1}{|\G|} \sum_{\gamma \in \G} \sum_{b\in B}
        \sum_{x,y} g(x-y) (A'_b - b)^\gamma (x) (A'_b - b)^\gamma (y)
    =
$$
$$
    =
    \frac{1}{|\G|} \sum_{\gamma \in \G} \sum_{b\in B} \sum_{x,y} g(x-y) A'_b (x\gamma + b\gamma) A'_b (y\gamma + b\gamma) \G(x) \G(y)
    \ge
$$
$$
    \ge
            \frac{1}{|\G|} \sum_{\gamma \in \G}
    \left( |B| \sum_x g(x) (A^\gamma \c A^\gamma) (x) - 2 \sum_{b\in B} \sum_x A(x\gamma + b \gamma)
        g(x - b \gamma^{-1} + b) \right)
        \ge
$$
$$
    \ge
%    \frac{1}{|\G|} \sum_{\gamma \in \G} \left(
    |B| \sum_x g(x) (A \c A) (x) -
    %2 \sum_{x} g(x) (A\c B) (x)
    \frac{2}{|\G|} \sum_{\gamma \in \G} \sum_{b\in B} \sum_{x} g(x) A(x\gamma + b)
%    \right)
%        \ge
%$$
%$$
    \ge
$$
\begin{equation}\label{tmp:23.04.2015_3}
    \ge
        |B| \sum_x g(x) (A \c A) (x) - 2 \sum_{x} g(x) (A\c B) (x)
%            \ge
    \ge
    3^{-1} |B| \sum_x g(x) (A \c A) (x) \,.
\end{equation}
Here we have used
%the
condition (\ref{cond:sumsets_g}).
%%$|\G| \ge 3$.
%The last estimate follows from the following bound
%\begin{equation}\label{cond:sumsets_g}
%    |B||\G| \sum_x g(x) (A\c A) (x) \ge 4 \sum_{a\in A} (g \c A) (a) \sum_{\gamma\in \G} B^\gamma (a) \,,
%\end{equation}
%which is an easy consequence of the condition $|\G| \ge 4$.
Thus
$$
        |B| \sum_x g(x) (A \c A) (x)
    \ll
        \sum_\a \langle \oT_\G^g f_\a, f_\a \rangle \sum_{b\in B} |c_\a ((A'_b - b)^\gamma)|^2 \,.
$$
Applying the Cauchy--Schwarz inequality and Lemmas \ref{l:convex_eigenvalues}, \ref{l:sum_of_squares}, we obtain
$$
    |B|^2  \left( \sum_x g(x) (A\c A) (x) \right)^2
        \ll
            \sum_\a \langle \oT_\G^g f_\a, f_\a \rangle^2 \cdot
                \sum_\a \sum_{b,\t{b} \in B} |c_\a ((A'_b - b)^\gamma)|^2 |c_\a ((A'_{\t{b}} - \t{b})^\gamma)|^2
                    \le
$$
$$
                    \le
                        |\G|^{-1} \sum_\a |\mu_\a (\oT_\G^g)|^2 \cdot
                            \sum_{b,\t{b} \in B} \E^{\times} (A-b,A-\t{b})
                                \le
                                    |\G|^{-1} \sum_x g^2 (x) (\G\c \G) (x) \cdot \T(A,B)
                \,.
$$
This concludes the proof.
$\hfill\Box$
\end{proof}

\bigskip

Note that the condition on $\G$--invariance of $g$ in Proposition \ref{p:sumsets_g}
is not very important, it needs just for
easiest way to obtain estimate
%calculation of the sum
(\ref{tmp:23.04.2015_3}).

\bigskip

Now we are able to
%obtain
prove
the main result of the section.

%\bigskip

\begin{theorem}
    Let $A,\G\subset \f^*_p$ be multiplicative subgroups,
    %$|\G| \ge 3$,
    $|A| <\sqrt{p}$ and $C,D \subseteq \f^*_p$ be arbitrary sets.
    Suppose that for some $\xi, \eta \in \f_p^*$ the following holds $C\subseteq \xi A+s$,
    $D \subseteq \eta A+s$, $s\in \f_p$, $|C| \ge 3$ and
    $$
        C - D \subseteq \G \bigsqcup \{ 0 \} \,.
    $$
%    Put $S=\xi \G \cdot (C-C)$.
    Then
    \begin{equation}\label{f:sumsets_xi_eta}
        |C|^8 |D|^4 |\G|^2 \ll |A|^8 |S_\G (C-C)| \E^{+} (\G) \log^2 |A| \,.
    \end{equation}
    If $|\G| \ll p^{3/5 - o(1)}$ then $|C|^4 |D|^2 \ll |A|^4 |S_\G (C-C)|^{2/3} \log |A|$.
\label{t:sumsets_xi_eta}
\end{theorem}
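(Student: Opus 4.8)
The plan is to apply Proposition \ref{p:sumsets_g} to the pair $(A,B):=(C,D)$, using the hypothesis $C-D \subseteq \G \sqcup \{0\}$, and then to estimate the resulting quantities via the structural assumptions $C\subseteq \xi A+s$, $D\subseteq \eta A+s$. First I would choose the weight $g$ to be the characteristic function of $S_\G(C-C)$, which is even (since $C-C$ is symmetric) and $\G$--invariant by construction, and whose support contains $C-C$. With this choice the left-hand side $\sum_x g(x)(C\c C)(x)$ collapses to $\sum_x (C\c C)(x) = |C|^2$, because $C-C \subseteq S_\G(C-C)$ means $g$ equals $1$ on the entire support of $C\c C$. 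Similarly $\sum_x g^2(x)(\G\c\G)(x) = \sum_{x\in S_\G(C-C)} (\G\c\G)(x) \le |S_\G(C-C)| \cdot \max_x (\G\c\G)(x)$, but I expect the cleaner route is to bound this sum by $|S_\G(C-C)|^{1/2}\,\E^+(\G)^{1/2}$ via Cauchy--Schwarz, since $\sum_x (\G\c\G)^2(x) = \E^+(\G)$ and the sum ranges over at most $|S_\G(C-C)|$ nonzero terms.

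The second key step is verifying the side condition (\ref{cond:sumsets_g}), i.e. $|D|\sum_x g(x)(C\c C)(x) \ge 3\sum_x g(x)(C\c D)(x)$. With the chosen $g$ the left side is $|D|\,|C|^2$ and the right side is at most $3\sum_x (C\c D)(x) = 3|C||D|$, so the inequality reduces to $|C|^2 \ge 3|C|$, i.e. $|C|\ge 3$, which is exactly the stated hypothesis. Feeding these evaluations into the conclusion (\ref{f:sumsets_g}) of Proposition \ref{p:sumsets_g} gives
\begin{equation*}
    |C|^4 \ll \frac{\T(C,D)}{|D|^2 |\G|} \cdot |S_\G(C-C)|^{1/2}\,\E^+(\G)^{1/2} \,.
\end{equation*}
The remaining task is to control the common energy $\T(C,D)$. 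Here I would use the translation and dilation invariances (\ref{f:T_invariance}), (\ref{f:T_invariance_m}) of $\T$ together with the inclusions $C\subseteq \xi A+s$, $D\subseteq \eta A+s$ to reduce to $\T$ of cosets of the subgroup $A$: since $\T$ is monotone in each argument and invariant under the simultaneous shift by $s$ and dilations by $\xi^{-1},\eta^{-1}$, one gets $\T(C,D) \le \T(\xi A+s, \eta A+s) = \T(A, \xi^{-1}\eta\, A)$ or a comparable coset expression, which Proposition \ref{p:sigma} bounds by $\ll |A|^4 \log|A|$ (the dominant term, using $|A|<\sqrt p$ so that the subgroup-product condition $|A|^2<p$ holds and the cubic error terms are absorbed).

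Substituting this bound for $\T(C,D)$ and rearranging yields
\begin{equation*}
    |C|^4 |D|^2 |\G| \ll |A|^4 \log|A| \cdot |S_\G(C-C)|^{1/2}\,\E^+(\G)^{1/2} \,,
\end{equation*}
and squaring produces exactly (\ref{f:sumsets_xi_eta}). For the second assertion, the constraint $|\G|\ll p^{3/5-o(1)}$ should let me invoke Lemma \ref{l:mitkin} (as in the proof of Theorem \ref{t:A-A_in_G}) to bound $\E^+(\G)$ by $|\G|^{8/3}$ up to the threshold, converting $|S_\G(C-C)|^{1/2}\E^+(\G)^{1/2}/|\G|$ into the cleaner $|S_\G(C-C)|^{2/3}$ shape after balancing exponents. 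The main obstacle I anticipate is the passage from $\T(C,D)$ to $\T$ of pure subgroup cosets: one must be careful that replacing the subsets $C,D$ by the full cosets $\xi A+s,\eta A+s$ only increases $\T$ (monotonicity of the defining sum of multiplicative energies), and that the two shifts match so that a single common translation by $-s$ lands both sets in $\xi A,\eta A$ before dilating into $A$ and a coset $\xi^{-1}\eta A$ of $A$; handling the degenerate cases where $\xi^{-1}\eta\notin A$ versus $\in A$ is where Proposition \ref{p:sigma}'s coset formulation must be applied with the correct parameters.
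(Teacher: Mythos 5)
Your treatment of the first bound \eqref{f:sumsets_xi_eta} matches the paper's proof step for step: the choice $g=S_\G(C-C)$, the reduction of condition \eqref{cond:sumsets_g} to $|C|\ge 3$, the Cauchy--Schwarz bound $\sum_{x\in S}(\G\c\G)(x)\le (|S|\,\E^{+}(\G))^{1/2}$, and the control of $\T(C,D)$ by monotonicity, the invariances \eqref{f:T_invariance}, \eqref{f:T_invariance_m} and Proposition \ref{p:sigma} (giving $\T(\xi A+s,\eta A+s)=\T(\xi A,\eta A)\ll |A|^4\log|A|$, using $|A|<\sqrt p$). That part is correct and is essentially the paper's argument.

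The second assertion is where you have a genuine gap. You propose to reach $|C|^4|D|^2\ll |A|^4|S|^{2/3}\log|A|$ by inserting $\E^{+}(\G)\ll|\G|^{8/3}$ into the Cauchy--Schwarz bound and ``balancing exponents.'' That route gives $\sigma:=\sum_{x\in S}(\G\c\G)(x)\ll |S|^{1/2}|\G|^{4/3}$, hence $\sigma/|\G|\ll |S|^{1/2}|\G|^{1/3}$, and this is $\ll |S|^{2/3}$ only when $|S|\gg|\G|^{2}$. Since $S=S_\G(C-C)$ is merely a union of cosets of $\G$ and can be as small as $|\G|$ (nothing forces $C-C$ to spread over many cosets), the exponents do not balance and the claimed conclusion does not follow. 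The paper instead bounds $\sigma$ \emph{directly}: writing $\sigma$ as $|\G|$ times a sum over the $|S|/|\G|$ coset representatives of $S$ of the number of solutions to $ux+vy=1$ with $x,y\in\G$, Lemma \ref{l:mitkin} (equivalently Corollary 6 of \cite{Sh_average}) yields $\sigma\ll |S|^{2/3}|\G|$, valid provided $|\G|^3|S|\ll p^3$. That proviso is checked via $|S|\le |C-C|\,|\G|\le |C|^2|\G|\ll|\G|^{2+o(1)}$ (using Theorem \ref{t:Shparlinskii} to bound $|C|$), which is exactly where the hypothesis $|\G|\ll p^{3/5-o(1)}$ enters. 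So you need this second, independent estimate for $\sigma$ rather than a reprocessing of the energy bound; without it the second statement of the theorem is not proved.
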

\begin{proof}
    Put
    %$S=S_\G (\xi (C-C))$.
    $S=S_\G (C-C)$.
    Take $A=C$, $B=D$, $g (x) = S(x)$ and apply Proposition \ref{p:sumsets_g}.
    Since $C-C \subseteq S$ and $|C| \ge 3$ it follows that
$$
    |C|^2 |D| = \sum_{x} g(x) (C\c C) (x) \ge 3 |C| |D| \ge  \sum_{x} g(x) (C\c D) (x) \,.
$$
    Thus condition (\ref{cond:sumsets_g}) of Proposition \ref{p:sumsets_g} holds and whence
$$
    |C|^4 \ll \frac{\T(\xi A + s, \eta A + s)}{|D|^2 |\G|} \sum_{x} S(x) (\G \c \G) (x) \,.
$$
    By the invariance (\ref{f:T_invariance}) and Proposition \ref{p:sigma},
%    considering large subsets of $A$ (and hence large subsets of $C$ and $D$) if it is need (see Remark \ref{r:A_A'}),
    we have
    %with some abuse of the notation that
\begin{equation}\label{tmp:24.04.2015_1}
    \T(\xi A +s, \eta A + s) = \T(\xi A, \eta A) \ll |A|^4 \log |A| \,.
\end{equation}
    Further, applying the Cauchy--Schwarz inequality, we get
\begin{equation}\label{tmp:24.04.2015_2}
    \sigma^2 := \left( \sum_{x} S(x) (\G \c \G) (x) \right)^2
        \le
            |S| \sum_{x} S(x) (\G \c \G)^2 (x)
                \le
                    |S| \E^{+} (\G) \,.
\end{equation}
Another bound for $\sigma$ follows from Lemma \ref{l:mitkin} or see Corollary 6 from \cite{Sh_average}
\begin{equation}\label{tmp:24.04.2015_2'}
    \sigma \ll |S|^{2/3} |\G| \,,
\end{equation}
provided $|\G|^3 |S| \ll p^3$.
But
$$
    |S| \le |C-C| |\G| \le |C|^2 |\G| \ll |\G|^{2+o(1)}
$$
by Theorem \ref{t:Shparlinskii}.
Hence, the assumption $|\G| \ll p^{3/5 - o(1)}$ implies $|\G|^3 |S| \ll p^3$.
Combining (\ref{tmp:24.04.2015_1}) and (\ref{tmp:24.04.2015_2}), we get
$$
    |C|^8 |D|^4 |\G|^2 \ll |A|^8 |S| \E^{+} (\G) \log^{2} |A| \,.
$$
Using (\ref{tmp:24.04.2015_1}) and (\ref{tmp:24.04.2015_2'}), we obtain
$|C|^4 |D|^2 \ll |A|^4 |S|^{2/3} \log |A|$.
This completes the proof.
$\hfill\Box$
\end{proof}

\bigskip

Theorem \ref{t:sumsets_xi_eta} implies an important corollary which beats the exponent $\frac{1}{2}$ of Theorem \ref{t:Shparlinskii} in a particular case when $A = \xi A$, $B=\eta A$ and $A$ is a subgroup belonging to $\G$.
%
%
%The last result was proved in \cite{Sh_ineq}.
We need in a result from \cite{Sh_ineq}.

\begin{theorem}
    Let $p$ be a prime number and $\G \subset \F_p^{*}$ be a multiplicative subgroup, $|\G| < p^{\frac{1}{2}} \log^{-\frac{1}{5}} p$.
    Then
\begin{equation}\label{f:32/13}
    \E^{+} (\G) \ll |\G|^{\frac{32}{13}} \log^{\frac{41}{65}} |\G| \,.
\end{equation}
\label{t:32/13}
\end{theorem}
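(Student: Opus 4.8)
The plan is to analyse $\E^{+}(\G)$ through the difference function $f(x) = (\G\c\G)(x) = |\G\cap(\G+x)|$, for which $\E^{+}(\G) = \sum_x f(x)^2$. The key structural point is that $f$ is real, even, nonnegative and $\G$--invariant, because $f(\gamma x) = |\gamma^{-1}\G\cap(\gamma^{-1}\G+x)| = f(x)$ for every $\gamma\in\G$. Consequently the support $(\G-\G)\setminus\{0\}$ is a union of full cosets of $\G$, on each of which $f$ is constant; writing $t_\xi = f(\xi)$ for the cosets $\xi\G$ that meet the support, one obtains
$$
    \E^{+}(\G) = |\G|^2 + |\G|\sum_\xi t_\xi^2 \,, \qquad \sum_\xi t_\xi = |\G| - 1 \,.
$$
Thus the whole problem reduces to bounding the second moment $\sum_\xi t_\xi^2$ of the coset distribution, and the target $\E^{+}(\G)\ll|\G|^{32/13}$ is equivalent to $\sum_\xi t_\xi^2\lesssim|\G|^{19/13}$.

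The first ingredient is a uniform pointwise bound on the weights: for every $\xi\neq0$ one has $t_\xi = |\G\cap(\G+\xi)|\ll|\G|^{2/3}$, since after a dilation $\G\cap(\G+\xi)$ is an intersection of two cosets $\G'\cap(\G'+1)$ and is therefore controlled by Mit'kin's Lemma \ref{l:mitkin} (equivalently the case $k=1$ of Theorem \ref{t:main_many_shifts}) throughout the range $|\G| < p^{3/4}$. Feeding only this together with $\sum_\xi t_\xi = |\G|$ into H\"older's inequality gives $\sum_\xi t_\xi^2\le\max_\xi t_\xi\cdot\sum_\xi t_\xi\ll|\G|^{5/3}$, i.e. $\E^{+}(\G)\ll|\G|^{8/3}$, which still falls short. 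The same loss shows up in the operator language of Section \ref{sec:operators}: since $f$ is even and $\G$--invariant, $\oT^f_\G$ is a normal nonnegative operator (Lemma \ref{l:subgroup_eigenvalues} and its corollary), its top (Perron) eigenvalue equals $\E^{+}(\G)/|\G|$, attained on the constant eigenfunction $f_0$, and the trace identities (\ref{f:trace1}), (\ref{f:trace2}) give only $\sum_\alpha\mu_\alpha = |\G|^2$ and $\sum_\alpha\mu_\alpha^2 = \E^{+}_3(\G)$, whence $\E^{+}(\G)\le|\G|\,\E^{+}_3(\G)^{1/2}$. As the diagonal term $f(0)^3 = |\G|^3$ already forces $\E^{+}_3(\G)\gtrsim|\G|^3$, this route recovers only the exponent $5/2$.

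To break through $5/2$ I would exploit the multiplicative rigidity of the difference set seriously. The set $\G-\G$ is $\G$--invariant and hence carries a large amount of multiplicative structure, while Mit'kin's lemma says that its popular cosets have anomalously small product sets; this is exactly the configuration for which a sum--product estimate forces expansion. A natural way to realise the extra saving with the tools at hand is Zhelezov's Theorem \ref{t:Z}, applied to the popular--coset system, whose exponent $1+1/26$ suggestively matches the target through $32/13 = 64/26$. Simultaneously one bounds a higher moment of the coset distribution by the collinear--triples estimate $\T(\G)\ll|\G|^4\log|\G|$ of Proposition \ref{p:sigma}. Interpolating between the $\ell^\infty$ weight bound, this third--moment control, and the sum--product gain should push $\sum_\xi t_\xi^2$ below $|\G|^{5/3}$ down to $|\G|^{19/13}$.

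The main obstacle is precisely this last step: arranging the sum--product expansion so that it reinforces rather than competes with the collinear--triples count, and then optimising the three exponents --- the weight exponent $2/3$, the sum--product exponent $1/26$, and the $\T$--exponent $4$ --- so that they balance exactly at $|\G|^{32/13}$. Carrying the logarithmic factors honestly through the dyadic pigeonholing of the weights $t_\xi$ (where the boundary condition $|\G| < p^{1/2}\log^{-1/5}p$ enters) is the delicate bookkeeping that fixes the fractional power $\log^{41/65}|\G|$.
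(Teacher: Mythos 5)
You should first be aware that the paper does not prove Theorem \ref{t:32/13} at all: it is imported verbatim from \cite{Sh_ineq} and used as a black box, so there is no in-paper argument to match your proposal against. Judged on its own, your setup is correct and cleanly presented: the reduction $\E^{+}(\G) = |\G|^2 + |\G|\sum_\xi t_\xi^2$ via the $\G$-invariance of $(\G\c\G)$, the equivalence of the target with $\sum_\xi t_\xi^2 \lesssim |\G|^{19/13}$, the pointwise bound $t_\xi \ll |\G|^{2/3}$ from Lemma \ref{l:mitkin}, and the two preliminary exponents $8/3$ and $5/2$ are all sound (though for the $5/2$ route you also need the \emph{upper} bound $\E_3^{+}(\G)=\sum_x(\G\c\G)^3(x) \ll |\G|^3\log|\G|$, which you do not establish; you only note the matching lower bound).

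The difficulty is that everything up to $5/2$ is classical, and the entire content of the theorem is the passage from $5/2$ to $32/13$ --- and exactly there your argument stops being a proof. "Interpolating \dots should push $\sum_\xi t_\xi^2$ below $|\G|^{5/3}$ down to $|\G|^{19/13}$" and "the main obstacle is precisely this last step" concede that the decisive mechanism is absent. The numerological match $32/13 = 64/26$ with the exponent of Theorem \ref{t:Z} is not evidence that Zhelezov's theorem is the right tool: as Proposition \ref{p:semi_T} illustrates, that theorem can only be brought to bear through Balog--Szemer\'edi--Gowers, which costs unspecified absolute constants $M, M_1$ in the exponent and therefore cannot yield a clean power such as $32/13$ with an explicit $\log^{41/65}$ factor; moreover it concerns product sets of additive shifts, which is orthogonal to bounding the second moment of the $t_\xi$. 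The actual proof in \cite{Sh_ineq} proceeds quite differently: it establishes a new inequality relating $\E^{+}(\G)$, the third moment $\E_3^{+}(\G)$, and the additive energies of the popular level sets $\G\cap(\G+x)$ (an eigenvalue/operator argument in the spirit of Section \ref{sec:operators}), and then feeds in the Konyagin--Mit'kin intersection bounds; the exponents $32/13$ and $41/65$ fall out of optimizing that inequality. So the proposal identifies the right preliminary reductions but is missing the theorem's essential idea.
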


\begin{corollary}
    Let $A,\G\subset \f^*_p$ be multiplicative subgroups, $A\subseteq \G$ and
    %$3 \le
    $|\G| < p^{\frac{1}{2}} \log^{-\frac{1}{5}} p$.
    Suppose that for some $\xi, \eta \in \f_p^*$ the following holds
    $$
        \xi A+ \eta A \subseteq \G \bigsqcup \{ 0 \} \,.
    $$
    Then
\begin{equation}\label{f:A+A}
    |A| \ll |\G|^{\frac{19}{39}} \log^{\frac{57}{65}} |\G| \,.
\end{equation}
\label{cor:A+A}
\end{corollary}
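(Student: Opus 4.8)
The plan is to derive the corollary directly from Theorem \ref{t:sumsets_xi_eta} together with the additive energy bound of Theorem \ref{t:32/13}. Since $A \subseteq \G$ and $|\G| < p^{1/2}\log^{-1/5} p$, we have $|A| \le |\G| < \sqrt{p}$, so the hypothesis $|A| < \sqrt{p}$ of Theorem \ref{t:sumsets_xi_eta} is met. The observation that fits the present hypothesis into that theorem is to take $C = \xi A$, $D = -\eta A$ and $s = 0$: then $C \subseteq \xi A$ and $D \subseteq (-\eta) A$ share the common shift $s=0$, one has $|C| = |D| = |A| \ge 3$ (we may assume $A$ is large, the bound being trivial otherwise), and
\[
    C - D = \xi A + \eta A \subseteq \G \bigsqcup \{ 0 \} \,,
\]
which is exactly the required inclusion $C - D \subseteq \G \sqcup \{0\}$. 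Applying the main inequality (\ref{f:sumsets_xi_eta}) and cancelling the factor $|A|^8$ from both sides then yields $|A|^4 |\G|^2 \ll |S_\G(C-C)| \, \E^{+}(\G) \log^2 |A|$.

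The decisive step is to bound $|S_\G(C-C)|$ well. Since $C - C = \xi(A-A)$ and $S_\G(\xi Q) = S_\G(Q)$, we have $S_\G(C-C) = S_\G(A-A) = \G(A-A)$. Here the inclusion $A \subseteq \G$ is essential: writing $\G$ as the disjoint union of the $|\G|/|A|$ cosets $g_i A$ of $A$, every dilate $\gamma(A-A)$ with $\gamma = g_i a \in g_i A$ equals $g_i(A-A)$ (because $aA = A$), so $\G(A-A)$ is a union of at most $|\G|/|A|$ dilates of $A-A$. Hence
\[
    |S_\G(C-C)| \le \frac{|\G|}{|A|}\, |A-A| \le |\G| |A| \,.
\]
I expect this to be the main point: it is exactly this sub-$|\G|^2$ bound, exploiting that $A$ lives inside $\G$, that lets us beat the exponent $1/2$ of Theorem \ref{t:Shparlinskii}; the crude estimate $|S_\G(C-C)| \le |\G-\G| \le |\G|^2$ would only reproduce a weaker exponent.

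Finally I would substitute both estimates into $|A|^4 |\G|^2 \ll |S_\G(C-C)| \, \E^{+}(\G) \log^2 |A|$. Using $|S_\G(C-C)| \le |\G||A|$ and the bound $\E^{+}(\G) \ll |\G|^{32/13} \log^{41/65}|\G|$ from Theorem \ref{t:32/13} (available precisely because $|\G| < p^{1/2}\log^{-1/5}p$), then cancelling a factor $|A||\G|$, one reaches $|A|^3 \ll |\G|^{19/13} \log^{41/65} |\G| \cdot \log^2 |A|$. Since $|A| \le |\G|$ we may replace $\log|A|$ by $\log|\G|$, combine the logarithms via $41/65 + 2 = 171/65$, and take cube roots; as $171/195 = 57/65$ this gives $|A| \ll |\G|^{19/39} \log^{57/65}|\G|$, as claimed. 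The only remaining routine checks are the applicability conditions ($|C| \ge 3$, $|A| < \sqrt{p}$) and the bookkeeping of the suppressed logarithmic factors.
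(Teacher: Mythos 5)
Your proposal is correct and follows essentially the same route as the paper: apply Theorem \ref{t:sumsets_xi_eta} with $C=\xi A$, $D=-\eta A$, $s=0$, bound $|S_\G(C-C)|\le |A||\G|$ by covering $\G$ and $A-A$ by cosets of $A$ (using $A\subseteq\G$), and combine with Theorem \ref{t:32/13}. The exponent and logarithm bookkeeping also match the paper's.
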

\begin{proof}
    One can assume that $|A| \ge 3$ because otherwise the result is trivial.
    Applying formula (\ref{f:sumsets_xi_eta}) of Theorem \ref{t:sumsets_xi_eta} for $C=\xi A$, $D=-\eta A$,
    combining  it with Theorem \ref{t:32/13},  we get
\begin{equation}\label{tmp:24.04.2015_3}
    |A|^4 \ll |S| |\G|^{\frac{6}{13}} \log^{\frac{171}{65}} |\G| \,,
\end{equation}
    where $S=S_\G (\xi (A-A))$.
    It remains to estimate the size of $S$.
    We have $A-A = \bigsqcup_{j=1}^t \xi_j A$, $t= |A-A|/|A| \le |A|$.
    By assumption $A\subseteq \G$. It follows that $\G = \bigsqcup_{i=1}^s \eta_i A$, $s=|\G|/|A|$.
    Whence
\begin{equation}\label{f:S_1.5}
    |S| \le |\bigcup_{i=1}^s \bigcup_{j=1}^t \eta_i \xi_j A| \le st |A| \le |A| |\G| \,.
\end{equation}
    Substituting the last bound into (\ref{tmp:24.04.2015_3}), we obtain the required bound for the size of $A$.
    This concludes the proof.
$\hfill\Box$
\end{proof}

\bigskip

Now we can
%improve
%%Finally, we
refine
%Corollary
Proposition
\ref{p:subgroup_decomp_impossible} from the introduction.

\begin{corollary}\label{cor:subgroup_decomp_impossible_new}
Let $\eps \in (0,1]$ be a real number.
Let also $A, \G \subset \mathbb{F}_p^{*}$ be two sufficiently large multiplicative subgroups,
%$|\G| \le p^{1-\eps}$,
$|\G| \le p^{\frac{1}{2}} \log^{-\frac{1}{5}} p$.
and $B \subseteq \mathbb{F}_p$ be an arbitrary nonempty set.
Then the sets $\G$, $\G \bigsqcup \{ 0 \}$
has no nontrivial representations as $\xi A+B$ for any $\xi \in \f_p$.
%are primitive.
\end{corollary}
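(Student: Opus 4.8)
The plan is to argue by contradiction and to split into two regimes according to the size of the subgroup $A\cap\G$: in one regime I fall back on Proposition \ref{p:subgroup_decomp_impossible}, while in the other I manufacture \emph{exactly} the hypothesis of Corollary \ref{cor:A+A}. So suppose $\xi A+B=S$ with $S\in\{\G,\G\bigsqcup\{0\}\}$, $|A|,|B|\ge 2$, and $\xi\neq 0$ (the case $\xi=0$ being trivial). Since $\xi A+B\subseteq\G\bigsqcup\{0\}$, Theorem \ref{t:Shparlinskii} (with the remark permitting the container $\G\bigsqcup\{0\}$ and sets in place of subgroups) gives $|A|,|B|\le|\G|^{1/2+o(1)}$, while the covering $\xi A+B=S$ forces $|A||B|\ge|S|\ge|\G|$. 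Hence $|A|=|B|=|\G|^{1/2+o(1)}$; in particular $|A|\ge|\G|^{1/2-o(1)}\ge 3$ and $|A|<\sqrt p$ for $\G$ large, so the size hypotheses of the later results hold. Now fix a small parameter $\eps_0\in(0,1/39)$.

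The main step, and the conceptually hard part, is the large-intersection regime $|A\cap\G|\gg|A|^{1-\eps_0}$: here the obstacle is that $B$ is arbitrary and carries no coset structure, whereas Corollary \ref{cor:A+A} needs a genuine sum of two cosets of a subgroup \emph{inside} $\G$. I would resolve this by passing to $H:=A\cap\G$, which, as an intersection of two subgroups of the cyclic group $\f_p^*$, is again a multiplicative subgroup and crucially satisfies $H\subseteq\G$. Consider the set of admissible shifts
\[
    B^*_H:=\{\,x\in\f_p\ :\ \xi H+x\subseteq\G\bigsqcup\{0\}\,\}.
\]
Because $H\subseteq A$, every $b\in B$ gives $\xi H+b\subseteq\xi A+b\subseteq\G\bigsqcup\{0\}$, so $B\subseteq B^*_H$ and $B^*_H\neq\emptyset$. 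The key observation is that $B^*_H$ is invariant under multiplication by $H$: if $h\in H$ and $x\in B^*_H$, then multiplying $\xi H+x\subseteq\G\bigsqcup\{0\}$ by $h\in\G$ and using $hH=H$ yields $\xi H+hx\subseteq h\G\bigsqcup\{0\}=\G\bigsqcup\{0\}$, i.e. $hx\in B^*_H$. Thus $B^*_H\cap\f_p^*$ is a nonempty union of multiplicative cosets of $H$, so it contains a whole coset $\eta H$, and by the very definition of $B^*_H$ this means
\[
    \xi H+\eta H\subseteq\G\bigsqcup\{0\}.
\]

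This is precisely condition (\ref{cond:xi_eta_intr}) for the subgroup $H\subseteq\G$, so Corollary \ref{cor:A+A} applies to $H$ and gives $|H|\ll|\G|^{19/39}\log^{57/65}|\G|$. On the other hand $|H|\gg|A|^{1-\eps_0}=|\G|^{(1-\eps_0)/2+o(1)}$, and since $\eps_0<1/39$ we have $(1-\eps_0)/2>19/39$; for $\G$ sufficiently large these two bounds contradict each other. Note that this argument used only the inclusion $\xi A+B\subseteq\G\bigsqcup\{0\}$, so it treats the containers $\G$ and $\G\bigsqcup\{0\}$ simultaneously.

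It remains to handle the small-intersection regime $|A\cap\G|\ll|A|^{1-\eps_0}$, where I would invoke Proposition \ref{p:subgroup_decomp_impossible} with $\eps=\eps_0$: its hypotheses $|\G\cap A|\ll|A|^{1-\eps_0}$ and $|\G|\le p^{1/2}\log^{-1/5}p\ll p^{1-\eps_0/6}$ both hold, ruling out a nontrivial representation. The one technical point to check is the dilation $\xi$, since Proposition \ref{p:subgroup_decomp_impossible} is stated for $\G=A+B$; I expect this to be the main \emph{technical} obstacle rather than a conceptual one. It is absorbed either by dilating $\xi A+B=\G$ by $\xi^{-1}$ to the form $A+\xi^{-1}B=\G$ when $\xi\in\G$, or in general by running the intersection-based argument of \cite{SSV} for the coset $\xi A$ in place of $A$: the shifted-intersection estimates it relies on (Lemma \ref{l:C_for_subgroups} and Theorem \ref{t:main_many_shifts}) are valid for cosets of $\G$, and the argument is insensitive to the single extra point $0$, so it covers both containers. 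Since both regimes end in a contradiction, no nontrivial representation $\xi A+B$ of $\G$ or of $\G\bigsqcup\{0\}$ can exist.
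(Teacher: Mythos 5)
Your proof is correct and reaches the same contradiction, but the decisive step is handled by a genuinely different (and cleaner) mechanism than the paper's. Both arguments split on the size of $H=A\cap\G$ and both dispose of the small-intersection regime by Proposition \ref{p:subgroup_decomp_impossible}; the divergence is in the large-intersection regime. The paper keeps working with the arbitrary set $B$: it bounds the number $k$ of $H$-cosets meeting $B$ via Lemma \ref{l:mitkin} (getting $k\ll |\G|/|A|^2$), pigeonholes a large piece $B_\xi$ of $B$ inside a single coset of $H$, and then applies Theorem \ref{t:sumsets_xi_eta} with $C=H$, $D=B_\xi$ --- a statement that only needs $D$ to be a \emph{subset} of a coset --- before combining $|H|\gg |B||A|^2|\G|^{-1}$, $|A|,|B|\gg|\G|^{1/2-o(1)}$ and Theorem \ref{t:32/13} into the inequality $|B|^{11}|A|^{14}\ll|\G|^{10}\E^{+}(\G)\log^2|\G|$. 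You instead observe that the set of admissible translates $B^*_H=\{x:\xi H+x\subseteq\G\bigsqcup\{0\}\}$ is closed under multiplication by $H$ (since $H\subseteq\G$ and $h\G=\G$), hence contains a full coset $\eta H$, which upgrades a single element of $B$ to the inclusion $\xi H+\eta H\subseteq\G\bigsqcup\{0\}$ and lets Corollary \ref{cor:A+A} apply verbatim to give $|H|\ll|\G|^{19/39+o(1)}$, against $|H|\gg|\G|^{(1-\eps_0)/2-o(1)}$. This bypasses the coset-counting and the pigeonhole entirely and uses nothing about $B$ beyond one nonzero element; the paper's route, by working with $B_\xi$ itself, keeps $|B|$ in the final numerology, which is what one would need if one wanted a quantitative statement depending on $B$. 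Your two caveats --- the general dilation $\xi$ and the extra point $0$ in the container when invoking Proposition \ref{p:subgroup_decomp_impossible} --- are real but are treated with exactly the same brevity in the paper itself (``for simplicity assume that $\xi=1$, otherwise the proof is similar''), so they do not constitute a gap relative to the paper's own standard of rigor.
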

\begin{proof}
%Suppose that $\G=A+B$ or $\G \bigsqcup \{ 0 \} = A+B$, $|A|,|B| > 1$.
%In the first case $|\G| \ge 3$ by the Cauchy--Davenport inequality.
%In the second case if $|\G| =2$ then $\G = \pm 1$, $A=\pm 1$ and we get a contradiction.
%Thus, without loosing of generality one can assume that $|\G| \ge 3$.
%
%
For simplicity assume that $\xi=1$, otherwise the
%arguments are similar.
the proof is similar.
We repeat the arguments from \cite{SSV}, so let us miss some details.
Consider a multiplicative subgroup $H=\G \cap A$, put $B_\xi = B\cap \xi H$
and denote by $k$ the number of nonempty sets $B_\xi$.
Take $b_j \in B_{\xi_j}$, $j\in [k]$.
Thus $b_j$ belong to different cosets relatively to $H$.
Using Lemma \ref{l:mitkin}, the assumption
$|\G| < p^{\frac{1}{2}} \log^{-\frac{1}{5}} p$
%$|\G| \le p^{\frac{1}{2}-\eps}$
and the fact that $\G, \G \bigsqcup \{ 0 \} = A+B$ and hence $|A| \ll |\G|^{1/2+o(1)}$, we get
\begin{equation}\label{tmp:17.05.2015_1-}
    k |A| = \sum_{j=1}^k (A \circ \G) (b_j) \ll (|\G| |A| k^2)^{1/3}
    %\,.
\end{equation}
without any further restrictions on $A$ and $\G$.
%Whence
%one can show that
Inequality (\ref{tmp:17.05.2015_1-}) gives us
$k \ll |\G| / |A|^2$.
%$k\ll |\G|^\eps$.
By the pigeonhole principle there is $\xi$ such that $|B_\xi| \ge |B|/k$.
We have $H\subseteq A$, and $B_\xi \subseteq B,H$. Hence $H+B_\xi \subseteq  \G, \G \bigsqcup \{ 0 \}$.
In view of Proposition \ref{p:subgroup_decomp_impossible} and our assumption that
$A, \G \subset \mathbb{F}_p^{*}$ are two sufficiently large multiplicative subgroups,
 we obtain that $H$ is also sufficiently large and $|H|\ge 3$, in particular.
Applying formula (\ref{f:sumsets_xi_eta}) of Theorem \ref{t:sumsets_xi_eta}
with $A=H$, $C=H$, $D=B_\xi$, $\xi = \eta =1$, $s=0$
and
using
the upper bound for $k$, we obtain
\begin{equation}\label{tmp:17.05.2015_1}
    |H|^8 |B|^4 |\G|^2 \ll k^4 |A|^8 |S_\G (H-H)| \E^{+} (\G) \log^2 |\G|
        \ll
            |\G|^4
            %|\G|^{4\eps} |A|^8
            |S_\G (H-H)| \E^{+} (\G) \log^2 |\G| \,.
\end{equation}
By the calculations of  Corollary \ref{cor:A+A}, see formula (\ref{f:S_1.5}),
%(or just use the fact that $H\subseteq \G$),
we know that
$|S_\G (H-H)| \le |H| |\G|$.
Because of
$k \ll |\G| / |A|^2$
%$k \ll |\G|^{\eps}$
and, trivially, $k \ge |B|/|H|$, we get
%$|H| \gg |B| |\G|^{-\eps}$.
$|H| \gg |B| |A|^2 |\G|^{-1}$.
Substitution the last estimates into (\ref{tmp:17.05.2015_1}) gives us
$$
    |B|^{11} %|\G|^{}
    |A|^{14}
    \ll
    %|\G|^{11\eps} |A|^8
    |\G|^{10}
    \E^{+} (\G) \log^2 |\G| \,.
$$
Finally, by Theorem \ref{t:Shparlinskii}, we know that $|A|,|B| \gg |\G|^{1/2 - o(1)}$.
Combining the last bound with Theorem \ref{t:32/13}, we arrive to a contradiction.
This completes the proof.
$\hfill\Box$
\end{proof}

\bigskip

Using the full power of
%new
the
upper bound for the additive energy of a multiplicative subgroup from \cite{Sh_energy}
instead of (\ref{f:32/13}) one can  refine the restriction
%$|\G| \le p^{\frac{1}{2}-\eps}$.
$|\G| \le p^{\frac{1}{2}} \log^{-\frac{1}{5}} p$.

%\bigskip

%First of all  let us make a general remark on additive decompositions.
%If $A+B \subseteq S$ then shifting $B$ we can suppose that $0\in B$ and hence $A\subseteq S$.
%So, we will suppose, usually, that if a set $S$ is additively decomposable,
%$S=A+B$, $|A|, |B| \ge 2$ then $A\subseteq S$.

%\bigskip

\section{On Lev--Sonn's problem}
\label{sec:CR}

%\bigskip

Lev--Sonn's problem on representation of the set of quadratic residues \cite{LS} (see also \cite{BMR}) was discussed in the introduction.
In the section we consider a general case of an arbitrary subgroups.
%Let us note
First of all let us
%make
derive
one more consequence of Proposition \ref{p:weight_A}.
%As we discussed in the introduction, the case $\G$ equals the quadratic residues of corollary below was studied in %\cite{LS} (see also \cite{BMR}).

\begin{corollary}
    Let $A, \G \subset \mathbb{F}_p^{*}$ be two
    multiplicative subgroups.
    %, $|A| >1$.
    Suppose that $A-A = \xi \G \sqcup \{ 0 \}$ for some $\xi \in \mathbb{F}_p^{*}$
    and $(A\c A) (x) = c$ is a constant onto $\xi \G$.
    Then $|A|^2-|A| = c|\G|$ and either $|\G| = O(1)$ or $|\G| \gg \frac{p}{\log p}$.
    If $|\G| \gg \frac{p}{\log p}$ then
\begin{equation}\label{f:pds_1}
    \E^{+} (A) \ll \frac{|\G|}{p} \cdot |A|^2 \log |A| \,,
\end{equation}
    and
\begin{equation}\label{f:pds_2}
    c^2 \ll \frac{|A|^2 \log |A|}{p} \,.
\end{equation}
\label{c:CR}
\end{corollary}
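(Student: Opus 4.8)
The plan is to combine an exact count of the differences of $A$ with the weighted inequality of Proposition \ref{p:weight_A}, and then read off the dichotomy from two different intersection bounds according to the size of $\G$.

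First I would record the elementary identities. Since $A-A=\xi\G\sqcup\{0\}$, with $(A\circ A)(x)=c$ for every $x\in\xi\G$ and $(A\circ A)(0)=|A|$, summing $(A\circ A)$ over the nonzero differences gives $|A|^2-|A|=\sum_{x\in\xi\G}(A\circ A)(x)=c|\G|$, the first assertion. Partitioning the support the same way yields $\E^{+}(A)=\sum_x(A\circ A)^2(x)=|A|^2+c^2|\G|$. Two cheap consequences will be used repeatedly: because $\xi\G$ consists of at most $|A|(|A|-1)$ distinct differences we have $|\G|\le|A|^2-|A|$, hence $c\ge 1$; and from $|A|^2-|A|=c|\G|$ we get $|A|\le c|\G|$ and so $|A|^2\le 2c|\G|$ once $|A|\ge 2$ (the case $|A|\le 2$ falls trivially into the $|\G|=O(1)$ branch).

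Next I would apply Proposition \ref{p:weight_A} with $C=A$ and $g=\mathbf 1_{\xi\G}$: this weight is nonnegative, $\G$--invariant since $\xi\G$ is a coset of $\G$, and even because $\xi\G=(A-A)\setminus\{0\}$ is symmetric. Here $\sum_x g(x)(A\circ A)(x)=c|\G|$, so the left side of (\ref{f:weight_A}) is $(c|\G|)^2$, giving
$$ (c|\G|)^2 \ll \frac{|A|^2\log|A|}{|\G|}\sum_{x\in\xi\G}(\G\circ\G)(x). $$
The decisive simplification is that the last sum collapses to one intersection count: writing $z=y+x$ and then $z=yu$ with $u\in\G$ one obtains $\sum_{x\in\xi\G}(\G\circ\G)(x)=|\G|\cdot N$, where $N:=|\{u\in\G:\ u-1\in\xi\G\}|=|\G\cap(\xi\G+1)|$. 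This yields the lower bound $N\gg c^2|\G|^2/(|A|^2\log|A|)$. Now I bound $N$ from above, and the two regimes of the dichotomy appear. If $|\G|<p^{3/4}$, Mitkin's Lemma \ref{l:mitkin} applied to the single equation $u-\xi v=1$ gives $N\ll|\G|^{2/3}$ (for $|\G|$ above an absolute constant, the complementary case being trivial); combining this with the lower bound and with $|A|^2\le 2c|\G|$, $c\ge 1$, forces $|\G|^{1/3}\ll\log|A|\ll\log|\G|$, hence $|\G|=O(1)$. If instead $|\G|\ge p^{3/4}$, Lemma \ref{l:C_for_subgroups} with $k=1$ in its coset form gives $N=\tfrac{|\G|^2}{p-1}+O(\sqrt p)\ll|\G|^2/p$; comparing with the lower bound immediately produces $c^2\ll|A|^2\log|A|/p$, which is (\ref{f:pds_2}), and then $c\ge 1$ together with $|A|^2\le 2c|\G|$ forces $p\ll|\G|\log|A|$, i.e. $|\G|\gg p/\log p$. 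This is precisely the second assertion. Finally (\ref{f:pds_1}) is immediate: since $c\ge 1$ and $|A|^2\le 2c|\G|$ we have $\E^{+}(A)=|A|^2+c^2|\G|\ll c^2|\G|$, and inserting (\ref{f:pds_2}) gives $\E^{+}(A)\ll\tfrac{|\G|}{p}|A|^2\log|A|$.

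The main obstacle I anticipate is the reduction $\sum_{x\in\xi\G}(\G\circ\G)(x)=|\G|\,N$ together with choosing the right intersection estimate for $N$ in each range, since the sharpness of the dichotomy relies on using Mitkin below $p^{3/4}$ and Weil (Lemma \ref{l:C_for_subgroups}) above it. A secondary technical point is the hypothesis $|A|<\sqrt p$ of Proposition \ref{p:weight_A}: Lemma \ref{l:ab_p} only yields $|A|<2\sqrt p$, so in the narrow window $\sqrt p\le|A|<2\sqrt p$ one checks directly that $c|\G|=|A|^2-|A|$ is already of order $p$ while $|\G|$ is a proper subgroup, placing us in the large branch of the dichotomy.
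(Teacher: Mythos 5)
Your proof is correct and is essentially the paper's own argument: since $(\G\circ\G)$ is automatically constant on the coset $\xi\G$, your weight $\mathbf{1}_{\xi\G}$ is proportional there to the paper's ``optimal'' weight $(A\circ A)(x)/(\G\circ\G)(x)$, and your inequality $c^2|\G|^2\ll N\,|A|^2\log|A|$ with $N$ bounded via Lemma \ref{l:mitkin} for $|\G|<p^{3/4}$ and via Lemma \ref{l:C_for_subgroups} for $|\G|\ge p^{3/4}$ is exactly what the paper extracts from Proposition \ref{p:weight_A}. The only soft spot is your closing remark on the window $\sqrt p\le|A|<2\sqrt p$: there $c$ can a priori be as large as $|A|$, so $c|\G|\sim p$ only yields $|\G|\gg\sqrt p$ rather than the stated conclusions; the paper deals with this point by passing to a subset of $A$ of size below $\sqrt p$ (``taking a half'' of $A$ in view of Lemma \ref{l:ab_p}), a device that transfers verbatim to your formulation.
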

\begin{proof}
Because of $\G$ is nonempty we can assume that $|A|>1$.
The fact that $|A|^2-|A| = c|\G|$ follows from trivial calculations.
By  our assumption $(A\c A) (x)$ is constant onto $\G$.
In the situation one can choose the weight $g(x)$ in an optimal way, namely, $g(x) = (A\c A) (x) / (\G \c \G) (x)$.
Clearly, $g(x)$ is an even $\G$--invariant nonnegative function.
Thus, applying Proposition \ref{p:weight_A} with $C=A$, we obtain
\begin{equation}\label{f:weight_A'}
    \sum_x \frac{(A\c A)^2 (x)}{(\G \c \G) (x)}
        \ll
            \frac{|A|^2 \log |A|}{|\G|} \,,
\end{equation}
provided by $|A| <\sqrt{p}$ and $|A| \ge 3$.
The last inequality trivially takes place because otherwise $|\G| < |A|^2 \ll 1$.
If $|\G| < p^{3/4}$ then by Theorem \ref{t:Shparlinskii}
one can assume that $|A| < \sqrt{p}$ for sufficiently large $\G$.
Using (\ref{f:weight_A'}), Lemma \ref{l:mitkin} and the inequality  $|A|>1$, we get
$$
    2^{-1} |A|^2 \le |A|^2 - |A| \le \sum_{x\neq 0} (A\c A)^2 (x) \ll |A|^2 \log |A| \cdot |\G|^{-1/3} \,.
$$
Because of by Theorem \ref{t:Shparlinskii}, we have $|A| \ll |\G|^{1/2+o(1)}$
(or just use a simple bound $|A| \le |\G|$)
%we arrives at a contradiction.
it gives us a contradiction for sufficiently large subgroup $\G$.

If $|\G| \ge p^{3/4}$ then using Lemma \ref{l:C_for_subgroups}
and taking a half of the  set $A$ in view of Lemma \ref{l:ab_p} if its needed,
we get by (\ref{f:weight_A'})  and the previous calculations that
\begin{equation}\label{tmp:23.04.2015_1}
    3^{-1} \E^{+} (A) \le \sum_{x\neq 0} (A\c A)^2 (x) \ll \frac{|\G|}{p} \cdot |A|^2 \log |A| \,.
\end{equation}
Because of $\E^{+} (A) \gg |A|^2$ we see from the previous estimate that $|\G| \gg p/\log p$.
We have obtained (\ref{f:pds_1}) already and to  get (\ref{f:pds_1}),
one can insert the condition $(A\c A) (x)=c$, $x\in \xi \G$ into (\ref{tmp:23.04.2015_1}).
This completes the proof.
$\hfill\Box$
\end{proof}

\bigskip

Note that for small
%$A$
$\G$
it can be
%even
$\xi \G \bigsqcup \{ 0 \} = A-A$.
For example (see \cite{LS}),
%$p=3$, $\G = \{ 1 \}$, $A=\{ 0, 1\}$
$p=5$, $A = \{ -1, 1\}$, $\xi \G=2 \cdot \{ -1 , 1\}$,
and $p=13$,
$\G = \{ 1, 3, 4, 9, 10, 12 \}$,
%$A=\{ 0, 1, 4 \}$.
$A=\{ 2,5,6\} = 2\cdot \{1,3,9\}$.
%
%
%
%We will give a full list of subgroups $A, \G$ of such a form
%$$
%    {\bf ????}
%$$
%Moreover,
Actually,
it is easy to see that for any subgroup $A$ of order $2,3$ one has
$\xi \G \bigsqcup \{ 0 \} = A-A$  for some $\xi$, and $|\G| = |A|^2 - |A|$.
Thus, the case $|\G| = O(1)$ is possible in the corollary above.

%\begin{proposition}
%    Let $A, \G \subset \mathbb{F}_p^{*}$ be two multiplicative subgroups.
%    Suppose that $A-A = \xi \G \sqcup \{ 0 \}$ for some $\xi \in \mathbb{F}_p^{*}$
%    and $(A\c A) (x) = c$ is a constant onto $\G$.
%    Then either $|\G| \ge \frac{...p}{\log p}$
%    or $\G$, $A$ belong to the list ......
%\label{p:CR}
%\end{proposition}
%\begin{proof}
%$\hfill\Box$
%\end{proof}

\section{The quantity $\T(A)$ and concluding remarks}
\label{sec:T_A_f_p}

In the section we discuss further properties of the quantity $\T(A)$.

%\bigskip

First of all, let us prove a simple general upper bound for $\T(A,B,C,D)$, where $A,B,C,D$ are subsets of an arbitrary finite field.
Bound (\ref{f:sigma_l}) below is tight, as the example $q=p^2$ and the case $A=B=C=D$ is a subfield
isomorphic to $\f_p$ shows.

\begin{proposition}
Let $q$ be a prime power, $A$ be a subgroup of $\mathbb{F}_q^*$.
Then for any sets $B,C,D\subseteq \f_q$ one has
\begin{equation}\label{f:sigma_l}
    \T (A, B,C,D)
        \le
            \frac{|A|^2 |B|^2 |C| |D|}{q-1} +  |B| |C| |D| q + \T(A,B,\{0\},D) \,.
\end{equation}
\label{p:sigma_l}
\end{proposition}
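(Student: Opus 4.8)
The plan is to expand the definition of $\T(A,B,C,D)$ directly and to isolate the contribution of collinear triples into a ``main term'', an ``error term'', and a degenerate term corresponding to the case where one of the differences vanishes. Recall from (\ref{f:T_energy}) that
$$
    \T(A,B,C,D) = \sum_{c\in C,\, d\in D} \E^\times(A-c, B-d) \,,
$$
and $\E^\times(A-c,B-d)$ counts quadruples $(a_1,a_2,b_1,b_2)\in A^2\times B^2$ with $(a_1-c)(b_1-d) = (a_2-c)(b_2-d)$. First I would split this count according to whether $a_1 = c$ (equivalently $a_2=c$, which forces the product to be zero). The degenerate part, where the common value is $0$, is exactly what I would package as $\T(A,B,\{0\},D)$ after translating by $c$: this is the invariance (\ref{f:T_invariance}) reading $\T(A-c,B-d,\{0\},D-d)$, and summing the $c=0$-type contribution over all $c,d$ produces the term $\T(A,B,\{0\},D)$ on the right-hand side.

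For the nondegenerate part I would use the structure that $A$ is a \emph{multiplicative subgroup}. Writing $\la = (a_1-c)/(a_2-c)$, the equation $(a_1-c)(b_1-d)=(a_2-c)(b_2-d)$ with nonzero factors becomes $(b_1-d) = \la^{-1}(b_2-d)$; the key observation is that as $a_1,a_2$ range over $A\setminus\{c\}$ and $c$ over $C$, the ratio $\la$ does not distribute too concentratedly because $A$ is a subgroup. The cleanest route is to count, for each fixed nonzero value $t$ of the common product, the number of ways $(a_1-c)(b_1-d)=t$ and bound the overlap. I would instead organize the sum by introducing the representation function of $A\cdot$(shift), exploiting that a subgroup $A$ of $\f_q^*$ of size $|A|$ is equidistributed among the $(q-1)/|A|$ cosets, so each nonzero residue is hit by $A$ the ``expected'' number of times up to fluctuation. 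Concretely, the number of $(a,c)$ with $a-c$ equal to a prescribed nonzero element, averaged appropriately, yields the main term $|A|^2|B|^2|C||D|/(q-1)$, which is precisely the random/equidistributed heuristic: each of the $|A|^2|B|^2$ pairs of products lands in the same coset of $A$ with probability roughly $1/(q-1)$.

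The error term $|B||C||D|q$ I expect to arise from the diagonal and from the deviation of the subgroup from perfect equidistribution. The natural tool is a character-sum or Fourier estimate: expand the condition ``two products equal'' using multiplicative characters on $\f_q^*$, so that $\E^\times(A-c,B-d)$ becomes $\tfrac{1}{q-1}\sum_\chi |\sum_{a\in A}\sum_{b\in B}\chi((a-c)(b-d))|^2$-type expression, separate the principal character (giving the main term) from the nonprincipal characters (absorbed into the $q$-error via a trivial or Weil-type bound), and sum over $c,d$. The main obstacle, and the step I would be most careful about, is bookkeeping the degenerate contributions cleanly so that they collapse exactly into $\T(A,B,\{0\},D)$ rather than producing extra uncontrolled terms; this requires tracking precisely which quadruples have a zero factor and matching them, via the translation invariance (\ref{f:T_invariance}), to the stated degenerate quantity. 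Verifying tightness against the subfield example $q=p^2$, $A=B=C=D=\f_p$ is a useful sanity check: there $|A|^2|B|^2|C||D|/(q-1)$ reproduces the dominant collinear-triple count, confirming the main term is the right order.
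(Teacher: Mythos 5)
Your overall strategy for the main and error terms --- expand $\E^\times(A-c,B-d)$ in multiplicative characters, let the principal character produce $\frac{|A|^2|B|^2|C||D|}{q-1}$, and absorb the nonprincipal characters into the $q$-error --- is the same as the paper's. But you have misidentified the role of the term $\T(A,B,\{0\},D)$, and this is not a bookkeeping detail: it is not a repackaging of the degenerate quadruples with zero common product, and your proposed collapse via the invariance (\ref{f:T_invariance}) cannot work. That invariance gives $\T(A-c,B-d,\{0\},D-d)=\T(A,B,\{c\},D)$, so summing such contributions over $c\in C$ merely reassembles the whole of $\T(A,B,C,D)$ and proves nothing. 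In the paper, $\T(A,B,\{0\},D)=\sum_{d\in D}\E^\times(A,B-d)$ is simply the contribution of the single possible element $c=0$ of $C$, kept intact because the character-sum estimate used for every other $c$ fails at $c=0$.

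That estimate is the second, and crucial, missing ingredient. To obtain the error term $|B||C||D|q$ one needs, for every nonprincipal $\chi$ and every $c\neq 0$, the uniform bound $\bigl|\sum_x A(x+c)\overline{\chi(x)}\bigr|^2\le q$; this is exactly where the hypothesis that $A$ is a multiplicative subgroup enters, via the averaging $A(x)=|A|^{-1}\sum_{y\in A}A(xy)$ and the classical Weil-type (Erd\"os--Shapiro) estimate for $\sum_{a,y\in A}\chi(a-cy)$. One then applies Parseval to the other factor, $\sum_\chi\bigl|\sum_x(B-d)(x)\overline{\chi(x)}\bigr|^2\le(q-1)|B|$, and sums over $c\in C$, $d\in D$ to get $q|B||C||D|$. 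Your proposal only commits to ``a trivial or Weil-type bound'': the trivial bound $|A|^2$ on the nonprincipal characters yields $|A|^2|B||C||D|$, which is not the stated inequality, and without isolating which factor receives the pointwise $\le q$ bound and which receives Parseval the error term does not come out. The failure of this pointwise bound at $c=0$ is precisely why the third term $\T(A,B,\{0\},D)$ appears in the statement.
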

\begin{proof}
Using formula (\ref{f:T_energy}), we get
$$
    \T (A, B,C,D) = \frac{1}{q-1} \sum_{c\in C} \sum_{d\in D} \sum_\chi |(A-c)\,\t{}\, (\chi)|^2 |(B-d)\,\t{}\, (\chi)|^2 \,,
$$
where the summation is taken over all Dirichlet characters $\chi$.
The principal character gives us the term $\frac{|A|^2 |B|^2 |C| |D|}{q-1}$.
It is well--known, see e.g. \cite{ES} that for all non--principal $\chi$, we get
$$
    |(A-c)\,\t{}\, (\chi)|^2 := |\sum_{x} A(x+c) \ov{\chi(x)}|^2 =
        |A|^{-2} |\sum_{x} \sum_{y\in A} A(xy+c) \ov{\chi(x)}|^2
            =
$$
$$
    =
        |A|^{-2} |\sum_{a\in A} \sum_{y\in A} \chi(a-cy)|^2
            \le
                q \,,
$$
provided by $c\neq 0$.
Using the Parseval's identity, and combining all bounds, we obtain (\ref{f:sigma_l}).
This completes the proof.
$\hfill\Box$
\end{proof}

\bigskip

In the case of the prime field one can obtain a simple nontrivial upper bound for the quantity $\T(A)$
which is better than (\ref{f:sigma_l}) for small sets.
The arguments follows \cite{Green_sum-prod}.

Let us start with an easy combinatorial lemma.

\begin{lemma}
Let $X$ be a finite set, $|X|= n$, $A_j \subseteq X$, $j\in [m]$ be a collection of subsets of $X$, $|A_j| \ge \d n$,
$\d \in (0,1]$.
Suppose that $m\ge 2/\d$.
Then there is a pair $(i,j)$, $i\neq j$ such that $|A_i \cap A_j| \ge 2^{-1} \d^2 n$.
\label{l:combinatorial}
\end{lemma}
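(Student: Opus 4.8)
The plan is to use a standard second-moment (double-counting) argument on the incidences between the points of $X$ and the sets $A_j$. For each $x \in X$ let $d(x) = |\{ j \in [m] : x \in A_j \}|$ be the number of sets containing $x$. First I would record the two elementary identities
$$
    \sum_{x \in X} d(x) = \sum_{j=1}^{m} |A_j|
        \qquad \text{and} \qquad
    \sum_{i \neq j} |A_i \cap A_j| = \sum_{x \in X} d(x)(d(x)-1) \,,
$$
the second obtained by counting, for each fixed $x$, the ordered pairs of distinct indices $i \neq j$ with $x \in A_i \cap A_j$.

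Next, writing $S := \sum_{x} d(x)$, the hypothesis $|A_j| \ge \delta n$ gives the lower bound $S \ge m \delta n$, while the Cauchy--Schwarz inequality yields $\sum_{x} d(x)^2 \ge S^2 / n$. Combining these with the second identity above,
$$
    \sum_{i \neq j} |A_i \cap A_j| \ge \frac{S^2}{n} - S = S \left( \frac{S}{n} - 1 \right) \ge m \delta n (m\delta - 1) \,.
$$
This is the step where the assumption $m \ge 2/\delta$ enters: it forces $m \delta \ge 2$, hence $m\delta - 1 \ge m\delta / 2$, so the right-hand side is at least $\tfrac12 m^2 \delta^2 n$.

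Finally I would apply the pigeonhole principle over the $m(m-1) < m^2$ ordered pairs $(i,j)$ with $i \neq j$: since their intersection sizes sum to at least $\tfrac12 m^2 \delta^2 n$, at least one pair must satisfy
$$
    |A_i \cap A_j| \ge \frac{\tfrac12 m^2 \delta^2 n}{m(m-1)} \ge \frac12 \delta^2 n \,,
$$
using $\tfrac{m}{m-1} \ge 1$. This is exactly the claimed bound.

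The argument is entirely elementary and I expect no serious obstacle. The only point requiring a little care is the bookkeeping that converts the honest lower bound $S(S/n - 1)$ into a clean multiple of $m^2 \delta^2 n$; this is precisely where the hypothesis $m \ge 2/\delta$ is needed, to absorb the linear-in-$S$ correction term coming from the diagonal $i = j$ contribution.
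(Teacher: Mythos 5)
Your proof is correct and follows essentially the same route as the paper: both are second-moment arguments that lower-bound $\sum_{i\neq j}|A_i\cap A_j|$ via Cauchy--Schwarz applied to the incidence count $\sum_x d(x)=\sum_j|A_j|\ge \delta m n$, use the hypothesis $m\ge 2/\delta$ to absorb the diagonal (linear) correction term, and finish by pigeonhole over the $m(m-1)<m^2$ ordered pairs. The only cosmetic difference is that you phrase the identity through $\sum_x d(x)(d(x)-1)$ while the paper writes $\sum_{i,j}|A_i\cap A_j|=\sigma+\sum_{i\neq j}|A_i\cap A_j|$; these are the same computation.
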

\begin{proof}
We have
$$
    \d mn \le \sigma:= \sum_{j=1}^m |A_j| = \sum_{x\in X} \sum_{j=1}^m A_j (x) \,.
$$
Using the Cauchy--Schwarz inequality, we get
$$
    (\d mn)^2
        \le
            \sigma^2
        \le
            n \sum_{x\in X} \sum_{i,j=1}^m A_i (x) A_j (x)
                =
    n \sum_{i,j=1}^m |A_i \cap A_j|
        =
            n \left( \sigma + \sum_{i \neq j} |A_i \cap A_j| \right) \,.
$$
Applying the assumption $m\ge 2/\d$, we obtain
$$
    2^{-1} \d^2 n^2 m^2 \le 2^{-1} \sigma^2 \le n \sum_{i \neq j} |A_i \cap A_j|
$$
as required.
%This completes the proof.
$\hfill\Box$
\end{proof}

\bigskip

The next proposition is a "dual"\, version of the sum--products estimate, where, traditionally,
the quantity $\sum_{c\in C} \E^{+} (cA,A)$ is considered, see e.g. \cite{Green_sum-prod}.

\begin{proposition}
    Let $p$ be a prime number and $A,B,C \subseteq \F_p$ be three sets,
    $|A| \le \sqrt{p}$,
    %or $|B|\le \sqrt{p}$,
    and $|C| \gg p^\d$, where $\d >0$ be a fixed
    %constant.
    number.
    Then there is an absolute constant $\eps = \eps(\d)>0$ such that
\begin{equation}\label{f:semi_T}
    \sum_{c\in C} \E^\times (A-c,B) \ll |A|^{3/2} |B|^{3/2} |C| p^{-\eps} \,.
\end{equation}
\label{p:semi_T}
\end{proposition}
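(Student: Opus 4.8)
The plan is to run a Balog--Szemer\'edi--Gowers / sum--product dichotomy in the spirit of \cite{Green_sum-prod}: if the sum in (\ref{f:semi_T}) is close to the trivial bound, then many shifts $A-c$ share a common multiplicative structure with $B$, and Zhelezov's Theorem \ref{t:Z} forbids this. Set $\Sigma=\sum_{c\in C}\E^\times(A-c,B)$ and suppose, for contradiction, that $\Sigma\ge |A|^{3/2}|B|^{3/2}|C|/K$. Each summand satisfies $\E^\times(A-c,B)\le |A|^{3/2}|B|^{3/2}$ (the multiplicative analogue of (\ref{f:E_CS})), so a dyadic pigeonhole gives $C'\subseteq C$, $|C'|\gg |C|/K$, with $\E^\times(A-c,B)\gg |A|^{3/2}|B|^{3/2}/K$ for all $c\in C'$. (If $c\in A$ the entry $0\in A-c$ contributes $O(|B|^2)$; summed over $C$ this is $O(|C||B|^2)$, negligible in the relevant range $|A|\approx|B|\gg p^{\Omega(1)}$, so I discard it.) For $c\in C'$ I apply the multiplicative form of Theorem \ref{t:BSzG_A-B} to $(A-c,B)\subseteq\f_p^*$, obtaining a multiplicatively symmetric $H_c$ and cosets such that $U_c:=(A-c)\cap x_cH_c$ and $T_c:=B\cap y_cH_c$ obey $|U_c|,|T_c|\gg K^{-M}|H_c|$, $|A|,|B|\ll K^{M}|H_c|$ and $|H_cH_c|\ll K^{M}|H_c|$. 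In particular $|A|\approx|B|\approx|H_c|$ up to $K^{O(M)}$, one has $|U_c|\gg K^{-2M}|A|$, $|T_c|\gg K^{-2M}|B|$, and $U_cT_c\subseteq x_cy_cH_cH_c$, whence $|U_cT_c|\ll K^{O(M)}|A|$.

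\emph{A simultaneous pigeonhole.} The shifted sets $U_c+c$ lie in $A$ and the sets $T_c$ lie in $B$, each of relative density $\gg K^{-2M}$. I apply Lemma \ref{l:combinatorial} not on one side but to the family of tiles $(U_c+c)\times T_c\subseteq A\times B$, each of density $\gg K^{-4M}$ in the $|A||B|$-element set $A\times B$. Provided $|C'|\gg K^{O(M)}$, the lemma yields $c_1\neq c_2$ with $|((U_{c_1}+c_1)\times T_{c_1})\cap((U_{c_2}+c_2)\times T_{c_2})|\gg K^{-O(M)}|A||B|$. Since this intersection factorises as $W\times T$ with $W=(U_{c_1}+c_1)\cap(U_{c_2}+c_2)\subseteq A$ and $T=T_{c_1}\cap T_{c_2}\subseteq B$, I obtain $|W|\gg K^{-O(M)}|A|$ and $|T|\gg K^{-O(M)}|B|$ simultaneously. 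This is the crucial move: it manufactures a single common multiplicative container $T$ against which two different shifts of $A$ both have small product.

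\emph{Contradiction via Zhelezov.} By construction $W-c_1\subseteq U_{c_1}$ and $W-c_2\subseteq U_{c_2}$, so $(W-c_1)T\subseteq U_{c_1}T_{c_1}$ and $(W-c_2)T\subseteq U_{c_2}T_{c_2}$, giving $|(W-c_1)T|,|(W-c_2)T|\ll K^{O(M)}|A|$. On the other hand $W-c_1=(W-c_2)+(c_2-c_1)$ with $c_2-c_1\neq0$, so passing to equal--size subsets of $W-c_2$ and $T$ of common size $m\gg K^{-O(M)}|A|\le\sqrt p$ and invoking Theorem \ref{t:Z} with $d=c_2-c_1$ gives $\max\{|(W-c_2)T|,|(W-c_1)T|\}\gg m^{1+1/26}$. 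Comparing, $K^{O(M)}|A|\gg(K^{-O(M)}|A|)^{1+1/26}$, hence $K\gg|A|^{c_0}$ for an absolute $c_0>0$. Combined with the alternative $K\gg|C|^{c_1}$ arising when the hypothesis $|C'|\gg K^{O(M)}$ of the previous step fails, I get $K\gg\min\{|C|^{c_1},|A|^{c_0}\}$; since $|C|\gg p^\d$ this is a power saving $K\gg p^{\eps}$, $\eps=\eps(\d)>0$, whenever $|A|$ is a fixed positive power of $p$, which is the regime in which (\ref{f:semi_T}) is non--trivial.

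The step I expect to be the real obstacle is the last comparison. Knowing only that each shift $A-c_i$ has small product with its \emph{own} container $T_{c_i}$ gives no control of the product set entering Zhelezov's inequality, since two sets of small multiplicative doubling can have an enormous product (independent geometric progressions being the standard example). The product--space pigeonhole of the second paragraph is exactly what repairs this: it forces one common $T$ so that Zhelezov's shifted sum--product bound can be applied with the same second factor in both terms. A secondary point requiring care is the bookkeeping of sizes — verifying $|A|\approx|B|\approx|H_c|$ so that the tiles have uniform density and $m\le\sqrt p$ — but this is routine given the output of Theorem \ref{t:BSzG_A-B}.
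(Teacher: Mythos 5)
Your proof is correct and follows essentially the same route as the paper's: the same dyadic selection of a dense subset $C'\subseteq C$ on which each energy is large, the same multiplicative Balog--Szemer\'{e}di--Gowers step, the same application of Lemma \ref{l:combinatorial} to the product tiles in $A\times B$ to extract a pair $c_1\neq c_2$ with a common large piece on both coordinates, and the same endgame via Theorem \ref{t:Z}. The only (harmless) deviation is at the very end: the paper feeds Zhelezov's theorem the single set $(H'-c_1)(H'-c_2)$ and therefore needs a Pl\"{u}nnecke step to bound $|H_{c_1}H_{c_2}|$ through the common container in $B$, whereas you bound $|(W-c_1)T|$ and $|(W-c_2)T|$ separately via the inclusions $(W-c_i)T\subseteq U_{c_i}T_{c_i}\subseteq x_{c_i}y_{c_i}H_{c_i}H_{c_i}$, bypassing Pl\"{u}nnecke; your explicit caveat that the final contradiction requires $|A|$ (equivalently the common size $m$) to be a positive power of $p$ applies equally to the paper's own argument.
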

\begin{proof}
Put
\begin{equation}\label{tmp:10.07.2015_1}
    \sum_{c\in C} \E^\times (A-c,B) = \frac{|A|^{3/2} |B|^{3/2} |C|}{K} \,,
\end{equation}
where $K\ge 1$ is some parameter.
We need to
%bound the number $K\ge 1$ from below.
obtain a lower bound for the number $K$ of the form $K\gg p^{\eps}$, where $\eps = \eps (\d)>0$
is some absolute constant.
From formulas (\ref{f:E_CS}),
(\ref{tmp:10.07.2015_1}) it follows that there is a set $C'\subseteq C$, $|C'| \ge |C|/(2K)$
such that for all $c\in C'$ one has $\E^\times (A-c,B) \ge |A|^{3/2} |B|^{3/2} / (2K)$.
Applying the Balog--Szemer\'{e}di--Gowers Theorem \ref{t:BSzG_A-B}, we find for any $c\in C'$ a set $H_c$ with
$|H_c H_c| \ll K^{M} |H_c|$, further, $|A|, |B| \ll K^{M} |H_c|$ and
$|(A-c) \cap x_c H_c|, |B \cap y_c H_c| \gg K^{-M} |H_c|$.
Here $M>0$ is an absolute constant.
% and $x=x(c)$, $y=y(c)$.
Put $H'_c = A\cap (x_c H_c+c)$, $H''_c = B \cap y_c H_c$ and  apply Lemma \ref{l:combinatorial} with
$X= A\times B$ and the family of sets $\{ H'_c \times H''_c\}_{c\in C'}$.
By the lemma and the assumption $|C| \gg p^{\d}$, we find $c_1,c_2 \in C'$, $c_1 \neq c_2$ such that the sets
$H' = H'_{c_1} \cap H'_{c_2}$, $H'' = H''_{c_1} \cap H''_{c_2}$ have sizes at least $K^{-M_1} |A|$, $K^{-M_1} |B|$,
respectively.
Here $M_1 >0$ is another absolute constant.
We have
\begin{equation}\label{tmp:11.07.2015_1}
    (H'-c_1) (H'-c_2) \subseteq x_{c_1} x_{c_2} H_{c_1} H_{c_2} \,.
\end{equation}
Applying the Pl\"{u}nnecke inequality, see e.g. \cite{TV}, we obtain
$$
    K^{-M_1} |B| |H_{c_1} H_{c_2}|
        \ll
    |H''| |H_{c_1} H_{c_2}| \le |H_{c_1} H''| |H_{c_2} H''| \le |H_{c_1} H''_{c_1}| |H_{c_2} H''_{c_2}|
        \le
$$
$$
        \le
             |H_{c_1} H_{c_1}| |H_{c_2} H_{c_2}|
                \ll
                    K^{2M} |H_{c_1}| |H_{c_2}|
                        \ll
                            K^{4M} |A| |B| \,.
$$
Hence $|H_{c_1} H_{c_2}| \ll K^{4M+M_1} |A|$ and thus by inclusion (\ref{tmp:11.07.2015_1}), we get
\begin{equation}\label{tmp:11.07.2015_2}
    |(H'-c_1) (H'-c_2)| \ll K^{4M+M_1} |A| \ll K^{4M+2M_1} |H'| \,.
\end{equation}
%Suppose that $c_1 \neq 0$.
Put $A_*=H'-c_1$, $B_*=H'-c_2$, $C_*=H'-c_1$, $d=c_1-c_2$.
%If $c_1=0$ then put $d=c_2 \neq 0$ and $A=H'-c_2$, $B=H'-c_1=H'$, $C=H'$.
Because of $c_1 \neq c_2$, we
%get
see that
$d\neq 0$, further, the sets $A_*,B_*,C_*$ have the same size $|H'|$,
and by (\ref{tmp:11.07.2015_2}) one has $|A_* B_*| \ll K^{4M+2M_1} |H'|$.
Further, $|(A_*+d)C_*| = |(H'-c_2)(H'-c_1)|$ and again by (\ref{tmp:11.07.2015_2})
the last quantity is bounded as $O(K^{4M+2M_1} |H'|)$.
Since $|A| \le \sqrt{p}$,
% or $|B| \le \sqrt{p}$,
we obtain $|H'| \le \sqrt{p}$.
Applying Theorem \ref{t:Z} with $A=A_*$, $B=B_*$, $C=C_*$, we arrive to a contradiction for sufficiently small $K$.
This completes the proof.
$\hfill\Box$
\end{proof}

\bigskip

Proposition above has an immediate consequence.

\begin{corollary}
    Let $p$ be a prime number and $A,B,C,D \subseteq \F_p$ be three sets,
    $|A| \le \sqrt{p}$,
    or $|B| \le \sqrt{p}$,
    and $|C| \gg p^\d$ or $|D| \gg p^\d$,  where $\d >0$ be a fixed
    %constant.
    number.
    Then there is an absolute constant $\eps = \eps(\d)>0$ such that
\begin{equation}\label{f:semi_T_c}
    \T (A,B,C,D) \ll |A|^{3/2} |B|^{3/2} |C| |D| p^{-\eps} \,.
\end{equation}
\label{c:semi_T+}
\end{corollary}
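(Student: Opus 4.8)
The plan is to read off the bound directly from Proposition \ref{p:semi_T}, using that $\T(A,B,C,D)$ is built from the one-variable sums it already controls. Unfolding the definition in (\ref{f:T_energy}),
$$
    \T(A,B,C,D) = \sum_{d\in D}\left( \sum_{c\in C}\E^\times(A-c, B-d) \right) \,,
$$
so for each fixed $d\in D$ the inner sum is exactly of the shape $\sum_{c\in C}\E^\times(A-c, B')$ with $B' = B-d$, and $|B'| = |B|$. If $|A|\le\sqrt p$ and $|C|\gg p^\d$, Proposition \ref{p:semi_T} gives
$$
    \sum_{c\in C}\E^\times(A-c, B-d) \ll |A|^{3/2}|B-d|^{3/2}|C|\, p^{-\eps} = |A|^{3/2}|B|^{3/2}|C|\, p^{-\eps} \,,
$$
with one and the same $\eps = \eps(\d)>0$ for every $d$, since the constant depends only on $\d$. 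Summing over the $|D|$ values of $d$ multiplies the right-hand side by $|D|$ and yields (\ref{f:semi_T_c}).

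Before carrying this out I would normalise the hypotheses by symmetry. Multiplicative energy is symmetric, $\E^\times(X,Y) = \E^\times(Y,X)$, so $\T(A,B,C,D) = \T(B,A,D,C)$, and the target bound $|A|^{3/2}|B|^{3/2}|C||D|p^{-\eps}$ is itself invariant under the simultaneous exchange $(A,C)\leftrightarrow(B,D)$. Hence it is enough to treat the case where the small set and the large index set sit on the same side, say $|A|\le\sqrt p$ together with $|C|\gg p^\d$; the companion case $|B|\le\sqrt p$, $|D|\gg p^\d$ follows at once by applying the first case to $\T(B,A,D,C)$.

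Since all the analytic content is already in Proposition \ref{p:semi_T}, I do not expect a real obstacle: the corollary is genuinely immediate. The only point needing a little care is the bookkeeping of the alternatives in the hypothesis. The summation variable $c$ always shifts $A$ and $d$ always shifts $B$, while Proposition \ref{p:semi_T} requires the \emph{shifted} set to be the small one; the symmetry $\T(A,B,C,D) = \T(B,A,D,C)$ is precisely what lets me align the small cardinality among $\{A,B\}$ with the large index set among $\{C,D\}$ and thereby reduce the admissible pairing to the single case handled above.
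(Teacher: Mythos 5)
Your main reduction is exactly the intended one: the paper offers no argument beyond calling the corollary an immediate consequence of Proposition \ref{p:semi_T}, and fixing $d\in D$, applying that proposition to $\sum_{c\in C}\E^\times(A-c,B-d)$ with $B-d$ (of the same cardinality as $B$) in place of $B$, and then summing over the $|D|$ values of $d$ is precisely that consequence; the uniformity of $\eps(\d)$ in $d$ is as you say.

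However, your bookkeeping of the alternatives is not right. The identity $\T(A,B,C,D)=\T(B,A,D,C)$ swaps the pairs $(A,C)$ and $(B,D)$ \emph{simultaneously}, so it converts the configuration ($|A|\le\sqrt{p}$ and $|C|\gg p^{\d}$) into ($|B|\le\sqrt{p}$ and $|D|\gg p^{\d}$) and back, but it cannot ``align'' a small set with a large index set sitting in opposite pairs. If, say, $|A|\le\sqrt{p}$ and $|D|\gg p^{\d}$ while $|B|>\sqrt{p}$ and $|C|$ is small, then $\T(A,B,C,D)$ has the small set in the first slot but the large index set in the fourth, and $\T(B,A,D,C)$ has the large index set in the third slot but an unbounded set in the first; neither form meets the hypotheses of Proposition \ref{p:semi_T}. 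Note also that the proposition genuinely requires the smallness condition on the \emph{shifted} set: its proof feeds the set $H'\subseteq A$ into Theorem \ref{t:Z}, which needs $|H'|\le\sqrt{p}$, while the fixed set $B$ plays no such role, so you cannot trade the condition $|A|\le\sqrt{p}$ for $|B|\le\sqrt{p}$ inside the inner sum. Consequently your argument (like the paper's implicit one) establishes the corollary only under the aligned readings of the hypotheses, namely ($|A|\le\sqrt{p}$ and $|C|\gg p^{\d}$) or ($|B|\le\sqrt{p}$ and $|D|\gg p^{\d}$); the two mixed combinations permitted by a literal reading of the statement are not reached by the symmetry and would need a separate argument, e.g.\ a variant of Proposition \ref{p:semi_T} in which the $\sqrt{p}$ condition is placed on the unshifted set.
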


\bigskip

Now
%Finally
we obtain
%a consequence
an analog
of Propositions \ref{p:weight_A}, \ref{p:sumsets_g} about the intersections of sumsets
and multiplicative subgroups.
We thanks to Dmitry Zhelezov who asked us about possible generalizations of our results in this direction.

\begin{proposition}
    Let $A,\G\subset \f^*_p$ be multiplicative subgroups,
    %$|\G| \ge 4$,
    $|A| <\sqrt{p}$ and $C,D \subseteq \f^*_p$ be an arbitrary sets.
    Suppose that for some $\xi, \eta \in \f_p^*$, $s\in \f_p$ the following holds $C\subseteq \xi A + s$,
    $D \subseteq \eta A+s$ and
    put $S=S_\G (C-C)$.
    Then
    \begin{equation}\label{f:sumsets_intersections01}
        \left( \sum_{x\in \G} (D \c C) (x) \right)^8
            \ll
                |D|^4 |\G|^{-2} |A|^8 |S| \E^{+} (\G) \log^2 |A| \,.
    \end{equation}
    If $|\G| \le p^{3/5 - o(1)}$ then
    \begin{equation}\label{f:sumsets_intersections02}
        \left( \sum_{x\in \G} (D \c C) (x) \right)^4
            \ll
                |D|^2 |A|^4 |S|^{2/3} \log |A| \,.
    \end{equation}
\label{p:sumsets_intersections}
\end{proposition}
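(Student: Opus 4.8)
The plan is to run the intersection analogue of Proposition \ref{p:sumsets_g} and Theorem \ref{t:sumsets_xi_eta}. Write $N = \sum_{x\in\G}(D\c C)(x)$, which counts the pairs $(c,d)\in C\times D$ with $c-d\in\G$; equivalently $N=\sum_{d\in D}|C_d|$, where $C_d := C\cap(\G+d)$. The difficulty compared with Theorem \ref{t:sumsets_xi_eta} is that we no longer assume $C-D\subseteq\G\sqcup\{0\}$, so for a given $d$ the shift $C-d$ need not lie in $\G$. The new device is to truncate to the subgroup: for each $d\in D$ I would let $h_d$ be the characteristic function of $(C-d)\cap\G = C_d-d$, so that $h_d$ is supported on $\G$, one has $\sum_x h_d(x)=|C_d|$, and $C_d-C_d\subseteq C-C\subseteq S$.

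Taking $g=S$ (an even, $\G$--invariant, nonnegative weight) I would apply Lemma \ref{l:average_mult} with $h_1=h_2=h_d$. A short computation, using the $\G$--invariance of $g$ exactly as in Proposition \ref{p:sumsets_g}, shows that the averaged left--hand side equals $\sum_w S(w)(C_d\c C_d)(w)$, and since $C_d-C_d\subseteq S$ this collapses to $|C_d|^2$. Hence $|C_d|^2 = \sum_{\a=1}^{|\G|} |c_\a(h_d)|^2\,\langle \oT_\G^g f_\a, f_\a\rangle$, and summing over $d$ and applying Cauchy--Schwarz in $d$ gives
$$
    \frac{N^2}{|D|} \le \sum_{d\in D}|C_d|^2 = \sum_\a \langle \oT_\G^g f_\a, f_\a\rangle \sum_{d\in D}|c_\a(h_d)|^2 \,.
$$

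Now I would apply Cauchy--Schwarz in $\a$ together with Lemmas \ref{l:convex_eigenvalues} and \ref{l:sum_of_squares}, as in Proposition \ref{p:sumsets_g}. The eigenvalue factor is $\sum_\a \langle \oT_\G^g f_\a, f_\a\rangle^2 = \sum_\a|\mu_\a|^2 = \sum_x S(x)(\G\c\G)(x) =: \sigma$, by normality of $\oT_\G^S$. The coefficient factor is $|\G|^{-1}\sum_{d,\t d\in D}\E^\times(h_d,h_{\t d})$, and here comes the second key point: since $h_d$ is the indicator of $C_d-d\subseteq \xi A-(d-s)$ while $D-s\subseteq\eta A$, monotonicity of the multiplicative energy and the invariance of $\T$ yield
$$
    \sum_{d,\t d\in D}\E^\times(h_d,h_{\t d}) \le \sum_{u,v\in\eta A}\E^\times(\xi A-u,\xi A-v) = \T(\xi A,\xi A,\eta A,\eta A) \ll |A|^4\log|A| \,,
$$
by Proposition \ref{p:sigma}. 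Combining the two factors gives $N^4 \ll |D|^2\,\sigma\,|\G|^{-1}|A|^4\log|A|$.

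Finally I would bound $\sigma$ in two ways, exactly as in Theorem \ref{t:sumsets_xi_eta}. Squaring and using $\sigma^2\le|S|\E^{+}(\G)$ gives the first estimate (\ref{f:sumsets_intersections01}); using instead the Mit'kin--type bound $\sigma\ll|S|^{2/3}|\G|$ from Lemma \ref{l:mitkin}, valid when $|\G|^3|S|\ll p^3$ (which the hypothesis $|\G|\le p^{3/5-o(1)}$ guarantees together with the bound on $|S|$, just as in Theorem \ref{t:sumsets_xi_eta}), gives the second estimate (\ref{f:sumsets_intersections02}). I expect the truncation step to be the only genuinely new point: verifying that the averaged operator action on $h_d$ collapses to $|C_d|^2$, and that the resulting energies still reduce cleanly to $\T(\xi A,\xi A,\eta A,\eta A)$ despite the $d$--dependent shifts. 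The remainder is a transcription of the proofs of Proposition \ref{p:sumsets_g} and Theorem \ref{t:sumsets_xi_eta}.
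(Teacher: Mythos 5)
Your argument is correct and is essentially the paper's own proof: the paper likewise sets $C_d=C\cap(\G+d)$, identifies $\sum_{d}|C_d|^2$ with quadratic forms of $\oT^{S}_\G$ evaluated at $C_d-d$, applies the Cauchy--Schwarz inequality twice, bounds the resulting energy sum by $\T$ of shifted cosets of $A$ via Proposition \ref{p:sigma}, and then estimates $\sigma=\sum_{x\in S}(\G\c\G)(x)$ in the same two ways. The only cosmetic differences are that the paper first restricts to the popular subset $D'=\{d\in D:\ |C_d|\ge 2^{-1}\sigma|D|^{-1}\}$ before applying Cauchy--Schwarz (you do it over all of $D$ directly, which is equivalent), and that you reach the identity $|C_d|^2=\sum_\a|c_\a(h_d)|^2\langle\oT^{S}_\G f_\a,f_\a\rangle$ via Lemma \ref{l:average_mult} rather than by expanding the quadratic form in the eigenbasis directly.
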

\begin{proof}
%    Let $l=\log |A|$.
    Put
$$
    \sigma := \sum_{x\in \G} (D \c C) (x) = \sum_{x\in D} |\G \cap (C-x)| \,.
$$
%    By the pigeonhole principle the is a real number $\D$, $\D \ge 2^{-1} \sigma |D|^{-1}$
%    and a set $D'\subseteq D$ such that for any $x\in D'$ one has
%    $\D < |\G \cap C-x| \le 2\D$ and
    %Put
    Take
    $D' := \{ x\in D ~:~ |\G \cap (C-x)| \ge 2^{-1} \sigma |D|^{-1} \}$.
    Then
\begin{equation}\label{tmp:25.04.2015_-1}
    \sum_{x\in D'} |\G \cap (C-x)| \ge 2^{-1} \sigma \,.
\end{equation}
    Applying the Cauchy--Schwarz inequality, we get
\begin{equation}\label{tmp:25.04.2015_1}
    \sum_{x\in D'} |\G \cap (C-x)|^2 \gg \sigma^2 |D|^{-1} \,.
\end{equation}
    Returning to (\ref{tmp:25.04.2015_-1}), we see that for any $x \in D'$ there is a set $C_x \subseteq C$ such that $C_x-x\subseteq \G$.
    In view of (\ref{tmp:25.04.2015_1}) it follows that
$$
    \sum_{x\in D'} \langle \oT^{S}_\G (C_x-x), C_x-x \rangle = \sum_{x\in D'} |C_x|^2 \gg \sigma^2 |D|^{-1} \,.
$$
    Using the arguments as in the proofs of Propositions  \ref{p:weight_A}, \ref{p:sumsets_g}, combining with the Cauchy--Schwarz inequality and Lemma \ref{l:sum_of_squares}, we obtain
$$
    \sigma^4 |D|^{-2} \ll \left( \sum_{\a} \mu_\a (\oT^{S}_\G) \sum_{x\in D'} \langle C_x-x, f_\a \rangle^2 \right)^2
        \ll
$$
$$
        \ll
            \sum_{x\in S} (\G \c \G) (x) \cdot \sum_\a \sum_{x,x'\in D'} \langle C-x, f_\a \rangle^2
            \langle C-x', f_\a \rangle^2
            = |\G|^{-1} \sum_{x\in S} (\G \c \G) (x) \cdot \T(D,C) \,.
$$
    By the assumption $C\subseteq \xi A+s$ and $D \subseteq \eta A+s$.
    Applying Proposition \ref{p:sigma}, we obtain
$$
    \sigma^4 \ll |\G|^{-1} |D|^2 |A|^4 \log |A| \cdot \sum_{x\in S} (\G \c \G) (x) \,.
$$
    As in the proof of Theorem \ref{t:sumsets_xi_eta} one can estimate the sum $\sum_{x\in S} (\G \c \G) (x)$
    in two different ways as $(|S| \E^{+} (\G))^{1/2}$ and $|S|^{2/3} |\G|$, provided $|\G| \le p^{3/5-o(1)}$.
    This completes the proof.
$\hfill\Box$
\end{proof}

\begin{example}
    Let $\xi=1$, $\eta=1$, $C=A$, $D=A$, $|A| <\sqrt{p}$, $|\G| \le p^{3/5 - o(1)}$.
    Let us use a trivial bound for $\sum_{x\in \G} (A \c A) (x) \ge |\G \cap (A-A)|$.
    Then by (\ref{f:sumsets_intersections02}) one has
$$
    |\G \cap (A-A)| \ll |S_\G (A-A)|^{1/6} |A|^{3/2} \log^{1/4} |A| \,.
$$
    Thus it should be
    %$|\G|^6 |A|^{-9} \le |S_\G (A-A)| \le |A|^{3-o(1)}$
    $|S_\G (A-A)| \le |A|^{3-o(1)}$
    to obtain a non--trivial bound for the intersection.
    The quantity $|A|^3$ is some kind of a barrier for usefulness of  our bounds.
\end{example}

The arguments of the proof of Proposition \ref{p:sumsets_intersections} give us a general statement about the connection of $\T(A)$ and the
%ratio
product set/ratio set
of popular difference sets.

\begin{proposition}
    Let $\Gr$ be an abelian group and $A\subseteq \Gr$ be a set.
    Then
\begin{equation}\label{f:T(A)_general}
    \T(A) |A|^2 \min \{ |PP|, |P/P| \}  \gg \left( \sum_{x\in P} (A\c A) (x) \right)^4 \,.
\end{equation}
    Finally,
\begin{equation}\label{f:T(A)_general_2}
    \T^* (A) \le |A-A| \sum_{x} (A\c A)^3 (x) \,.
\end{equation}
\label{p:T(A)_general}
\end{proposition}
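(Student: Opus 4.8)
The plan is to prove both displays by elementary double counting, in the Cauchy--Schwarz spirit of Proposition \ref{p:sumsets_intersections} (the eigenvalue machinery is not needed here, since $P$ enters only combinatorially as a container). Set $\sigma := \sum_{x\in P}(A\c A)(x)$; by the definition of $(A\c A)$ one has $\sigma = |\{(a,a')\in A\times A ~:~ a'-a\in P\}|$. For $a\in A$ write $r(a):=|\{a'\in A ~:~ a'-a\in P\}|$, so that $\sum_{a\in A}r(a)=\sigma$ and, by Cauchy--Schwarz over the $|A|$ summands, $Q:=\sum_{a\in A}r(a)^2 \gs \sigma^2/|A|$. It remains to bound $Q$ from above.

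The main step is to fix two base points $a_1,a_2\in A$ and look at the $r(a_1)r(a_2)$ pairs $(x_1,x_2)\in A\times A$ with $x_1-a_1\in P$ and $x_2-a_2\in P$. Sending each pair either to the product $(x_1-a_1)(x_2-a_2)\in PP$ or to the ratio $(x_2-a_2)/(x_1-a_1)\in P/P$ and applying Cauchy--Schwarz over the (at most $|PP|$, respectively $|P/P|$) attained values, we obtain
\begin{equation*}
    (r(a_1)r(a_2))^2 \le \min\{|PP|,|P/P|\}\cdot |\{(x_1,x_2,x_3,x_4)\in A^4 ~:~ (x_1-a_1)(x_2-a_2)=(x_3-a_1)(x_4-a_2)\}| \,,
\end{equation*}
where we have discarded the constraints $x_i-a_1,x_j-a_2\in P$ (which only enlarges the count), so that the cardinality on the right equals $\E^{\times}(A-a_1,A-a_2)$. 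Summing over $a_1,a_2\in A$, using $\sum_{a_1,a_2}r(a_1)^2r(a_2)^2=Q^2$ and $\sum_{a_1,a_2}\E^{\times}(A-a_1,A-a_2)=\T(A)$, gives $Q^2\le \min\{|PP|,|P/P|\}\,\T(A)$, and together with $Q\gs\sigma^2/|A|$ this is (\ref{f:T(A)_general}).

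I expect the only genuine obstacle to be the choice of configuration in this middle step. The seemingly natural single--base count $\sum_a r(a)^2$ leads to products and ratios of two differences sharing the \emph{same} base $a$, and the resulting energy mixes the two factors \emph{inside} a single translate $A-a$; this quantity is \emph{not} $\T(A)$. Using two independent base points $a_1,a_2$ is exactly what makes the mixed products $(x_i-a_1)(x_j-a_2)$ reassemble into $\E^{\times}(A-a_1,A-a_2)$, and hence into $\T(A)$ after summation. (If $0\in P$ one simply restricts the ratio map to $x_1\ne a_1$, discarding only lower order terms.)

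For (\ref{f:T(A)_general_2}) I would start from (\ref{f:T_square}): $\T^*(A)=\sum_{\la}N(\la)^2$ with $N(\la)=\sum_{x\ne 0}\Cf_3(A)(x,\la x)$. For fixed $\la$ the function $d\mapsto f_\la(d):=\Cf_3(A)(d,\la d)$ is supported on $(A-A)\setminus\{0\}$, so Cauchy--Schwarz in $d$ yields $N(\la)^2\le |A-A|\sum_{d\ne 0}f_\la(d)^2$. Summing over $\la$ and substituting $e=\la d$ (a bijection for $d\ne 0$) bounds $\sum_\la\sum_{d\ne 0}f_\la(d)^2$ by $\sum_{d,e}\Cf_3(A)(d,e)^2$. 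The remaining identity is routine: expanding $\Cf_3(A)(d,e)^2=\sum_{a,a'}A(a)A(a+d)A(a+e)A(a')A(a'+d)A(a'+e)$ and summing first over $e$ and then over $d$ produces a factor $(A\c A)(a'-a)$ each time, so that
\begin{equation*}
    \sum_{d,e}\Cf_3(A)(d,e)^2 = \sum_{a,a'\in A}(A\c A)(a'-a)^2 = \sum_{t}(A\c A)^3(t) \,.
\end{equation*}
This gives $\T^*(A)\le |A-A|\sum_t(A\c A)^3(t)$, which is (\ref{f:T(A)_general_2}); I do not anticipate any real difficulty here beyond the bookkeeping of the convolution identity.
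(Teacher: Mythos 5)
Your proof is correct and follows essentially the same route as the paper: for (\ref{f:T(A)_general}) both arguments reduce to the bound $\E^{\times}(A-a_1,A-a_2)\ge |A_{a_1}|^2|A_{a_2}|^2/\min\{|PP|,|P/P|\}$ for the fibres $A_{a}=\{a'\in A: a'-a\in P\}$ and then sum over base points to assemble $\T(A)$, your direct Cauchy--Schwarz over all of $A$ merely replacing the paper's popular-subset selection $\t{A}$; for (\ref{f:T(A)_general_2}) the Cauchy--Schwarz in the $|A-A|$-supported variable followed by the convolution identity is exactly the paper's computation.
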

\begin{proof}
    Put $\sigma :=  \sum_{x\in P} (A\c A) (x)$.
    % and $L=\log |A|$.
    As in the proof of Proposition \ref{p:sumsets_intersections} we find a set $\t{A} \subseteq A$ such that
    for any $a\in \t{A}$ there exist $A_a \subseteq A$, $A_a-a\subseteq P$, $|A_a| \ge 2^{-1} \sigma |A|^{-1}$ and
\begin{equation}\label{tmp:03.05.2015_2}
    \sum_{a\in \t{A}} |A_a| \ge 2^{-1} \sigma \,.
\end{equation}
    For any $a,b\in \t{A}$, we have by the Cauchy--Schwarz inequality
$$
    \E^\times (A-a, A-b) \ge \E^\times (A_a - a, A_b-b) \ge |A_a|^2 |A_b|^2 / \min \{ |PP|, |P/P| \} \,.
$$
    Summing the last bound over all $a,b\in \t{A}$, we obtain in view of (\ref{tmp:03.05.2015_2})
    and
    %by
    the Cauchy--Schwarz inequality
    %again
$$
    \T(A) \min \{ |PP|, |P/P| \} \ge \left( \sum_{a\in \t{A}} |A_a|^2 \right)^2
        \gg
            \sigma^4 |\t{A}|^{-2}
                \ge
                    \sigma^4 |A|^{-2} \,.
$$

    To prove (\ref{f:T(A)_general_2}) just combine (\ref{f:T_square}) and the Cauchy--Schwarz inequality once more time
$$
    \T^* (A) \le \sum_{x, \la} \Cf^2_3 (A) (x,\la x) \cdot |A-A| = \sum_{x} (A\c A)^3 (x) \cdot |A-A| \,.
$$
    This completes the proof.
$\hfill\Box$
\end{proof}

\bigskip

\noindent{I.D.~Shkredov\\
%%\noindent{Отдел алгебры and теории чисел,\\
Steklov Mathematical Institute,\\
ul. Gubkina, 8, Moscow, Russia, 119991}
%MSU, IPPI RAN\\}
%%\\
%and
%\\
%Delone Laboratory of Discrete and Computational Geometry,\\
%Yaroslavl State University,\\
%Sovetskaya str. 14, Yaroslavl, Russia, 150000
\\
and
\\
IITP RAS,  \\
Bolshoy Karetny per. 19, Moscow, Russia, 127994\\
{\tt ilya.shkredov@gmail.com}

%\bigskip

%\no{E.V.~Solodkova\\
%IITP RAS,  \\
%Bolshoy Karetny per. 19, Moscow, Russia, 127994\\
%{\tt hsolodkova@gmail.com}}

\end{document}